\documentclass{mcom-l}

\usepackage{graphicx}
\usepackage{color}

\newtheorem{theorem}{Theorem}[section]
\newtheorem{proposition}[theorem]{Proposition}
\newtheorem{lemma}[theorem]{Lemma}

\theoremstyle{definition}

\theoremstyle{remark}
\newtheorem{remark}[theorem]{Remark}

\definecolor{midbluepurple}{RGB}{134,0,102}

\newtheorem{assumption}[theorem]{Assumption}

\numberwithin{equation}{section}

\newcommand\tr{\mathrm{tr}}

\newcommand\grad{\mathrm{grad}}

\newcommand\vvec{\mathbf{v}}
\newcommand\Cvec{\mathbf{C}}
\newcommand\rvec{\mathbf{r}}
\newcommand\e{\mathbf{e}}
\newcommand\x{\mathbf{x}}
\newcommand\I{\mathbf{I}}
\newcommand\Pvec{\mathbf{P}}
\newcommand\M{\mathcal{M}}
\newcommand\A{\mathbf{A}}

\newcommand\hessian{\mathrm{Hess}}
\newcommand\gradient{\mathrm{grad}}

\begin{document}

\title[Convergence analysis of the discrete constrained saddle dynamics]{Convergence analysis of the discrete constrained saddle dynamics and their momentum variants}

\author{Qiang Du}
\address{Department of Applied Physics and Applied Mathematics, and Data Science Institute, Columbia University, New York, NY 10027, USA}
\curraddr{}
\email{qd2125@columbia.edu}
\thanks{The work was partially supported by NSF DMS-2309245 and DMS-1937254, and the Department of Energy grant DE-SC0025347.}

\author{Baoming Shi}
\address{Department of Applied Physics and Applied Mathematics, Columbia University, New York, NY 10027, USA}
\curraddr{}
\email{bs3705@columbia.edu}
\thanks{}

\subjclass[2010]{Primary}

\date{}

\dedicatory{}

\begin{abstract}
  We study the discrete constrained saddle dynamics and their momentum variants for locating saddle points on manifolds. Under the assumption of exact unstable eigenvectors, we establish a local linear convergence of the discrete constrained saddle dynamics and show that the convergence rate depends on the condition number of the Riemannian Hessian. To mitigate this dependence, we introduce a momentum-based constrained saddle dynamics and prove local convergence of the continuous-time dynamics as well as the corresponding discrete scheme, which further demonstrates that momentum accelerates convergence, particularly in ill-conditioned settings. In addition, we show that a single-step eigenvector update is sufficient to guarantee local convergence; thus, the assumption of exact unstable eigenvectors is not necessary, which substantially reduces the computational cost. Finally, numerical experiments, including applications to the Thomson problem, the Rayleigh quotient on the Stiefel manifold, and the energy functional of Bose-Einstein condensates, are presented to complement the theoretical analysis.
\end{abstract}

\maketitle

\section{Introduction}

Exploring the complex landscape of an energy surface is of great interest in many scientific applications, such as the phase transformations \cite{gramsbergen1986landau,lifshitz2007soft}, chemical reactions \cite{vanden2010transition}, protein folding \cite{onuchic1997theory,wolynes1995navigating}, and the loss landscapes of deep neural networks \cite{goodfellow2016deep}. A critical point, at which the gradient vanishes on the energy landscape, can potentially play an important role in determining physical or chemical properties of the system. The stability of a critical point is determined by its Hessian matrix. Barring degeneracy, a critical point is a minimizer if all eigenvalues of its Hessian are positive, while the critical points with $k$ negative eigenvalues are called index-$k$ saddle points. 

The numerical search for saddle points on a complicated energy landscape has attracted plenty of attention during the past decades. Existing numerical algorithms for finding saddle points can be generally divided into path-finding methods and surface-walking methods. Examples of the former include the string method \cite{weinan2002string,weinan2007simplified} for locating the minimum energy path (MEP), with points along the MEP with locally maximum energy value being saddle points. The latter class of methods includes the gentlest ascent dynamics \cite{gad,quapp2014locating, liu2023constrained}, the dimer-type method \cite{henkelman1999dimer,zhangdu2012,zhang2016optimization}, the min-max method \cite{li2001minimax}, and the saddle dynamics (SD) \cite{2019High, JJIAM2023,su2025improved}, which evolve an iteration on the potential energy surface toward a saddle point and enables us to construct a pathway map of connected critical points \cite{yin2020construction,yin2021searching}. 

In many practical applications, the challenge of searching for saddle points is further increased by constraints on the variables, e.g., Oseen-Frank theory for nematic liquid crystals \cite{frank1958liquid}, the Hamiltonian energy functional \cite{griffiths2018introduction}, and the Rayleigh quotient problem \cite{absil2009optimization}. To address this issue, constrained saddle dynamics (CSD) \cite{zhang2012constrained, yin2020constrained,liu2023constrained} were developed to locate saddle points of the objective function subject to equality constraints. In this paper, we study the CSD for locating the saddle points on a constrained manifold. 
Our main contributions include the convergence analysis of such algorithms, as well as the study of their momentum-based acceleration. The first key result is the analysis of the discrete CSD under the assumption of the exact unstable eigenvectors. We establish local linear convergence analysis for discrete CSD in such a case and show that the convergence rate depends on the condition number of the Riemannian Hessian. When the problem is ill-conditioned, to avoid the slow convergence of the discrete CSD, our second key result is the proposed (discrete) momentum-based constrained saddle dynamics (MCSD) and the local convergence theory for both the continuous-time dynamics and the corresponding discrete scheme. Our finding shows that the momentum term relieves the dependence of the convergence rate on the condition number and thus accelerates the convergence in the ill-conditioned case. In the actual implementation of the SD and CSD, instead of searching for exact unstable eigenvectors at each iteration point, we typically update the eigenvectors only once at each iteration point to save the computational cost \cite{2019High,zhang2022error}. To this end, we remove the exact eigenvector assumption and analyze the local convergence of the Euler scheme of the coupled dynamics of the iteration points and unstable eigenvectors. Our third key result is to prove that the local convergence still holds and the convergence rate is asymptotically equal to the case with the exact eigenvector assumption. All these theoretical results are further supported by numerical examples.

The paper is organized as follows. In Section \ref{sec 2: review of the CSD}, we review the CSD. In Section \ref{sec: MCSD}, we propose the MCSD and prove the local convergence of the continuous-time dynamics. In Section \ref{sec: CSD with exact eigenvector}, we establish the convergence analysis of the discrete CSD and MCSD with the exact unstable eigenvectors. In Section \ref{sec: one step v update}, we prove that the local convergences of the discrete SD, CSD, and MCSD still hold with one-step eigenvector update rather than the exact unstable eigenvector. In Section \ref{sec: numerics}, we conduct numerical experiments to verify our theoretical results. We finally present our conclusion and discussion in Section \ref{discussion}.

\section{Saddle Points on Manifolds and Constrained Saddle Dynamics}\label{sec 2: review of the CSD}
Given a smooth objective function $f(\x):\mathbb{R}^d\rightarrow \mathbb{R}$, we consider the function $f(\x)$ subject to $m$ equality constraints,
$$
\Cvec(\x)=(c_1(\x),\cdots,c_m(\x))=\mathbf{0},
$$
where each constraint function $c_i(\x)$ is a smooth function. 

Denote
$$\A(\x)=(\nabla c_1(\x),\cdots,\nabla c_m(\x)),$$
and we assume that $\A(\x)$ has full column rank for all $\x\in\M$, i.e., the linear independence constraint qualification (LICQ) holds \cite{nocedal1999numerical}. The feasible set consisting of all feasible points,
$$
\M=\{\x\in \mathbb{R}^d: \Cvec(\x)=0\},
$$
is then a $(d-m)$-dimensional smooth manifold embedded in $\mathbb{R}^d$. For brevity, we assume throughout that both $f$ and the manifold $\M$ are smooth. For any $\x \in \M$, the normal space of $\M$ at $\x$ is $$N_\x\M=\text{span}\{\A(\x) \},$$ and the tangent space is defined as its orthogonal complement $$T_\x\M=N_\x \M^\perp=\{\vvec \in \mathbb{R}^d, \A(\x)^\top \vvec=0\}.$$ 

To derive the orthogonal projector onto $T_\x \M$, we can consider the least-squares problem $\min_{\hat{\x}}\|\A(\x)\hat{\x}-\vvec\|_2^2$. Under the LICQ assumption, the minimizer is $\hat \x^*=(\A(\x)^\top\A(\x))^{-1}\A(\x)^\top\vvec$ and the residual equals $$\vvec-\A(\x){\hat{\x}}^*=(\I-\A(\x)(\A(\x)^\top \A(\x))^{-1}\A(\x)^\top)\vvec.$$ Therefore, the orthogonal projection onto the tangent space $T_\x$ is 
$$
\Pvec(\x)=\I-\A(\x)(\A(\x)^\top \A(\x))^{-1}\A(\x)^\top.
$$

Since the objective function is constrained to the manifold $\M$, we introduce the Riemannian gradient and Hessian. For any $\x \in \M$, the Riemannian gradient is defined by
$$
\gradient f(\x)=\Pvec(\x)\nabla f(\x).
$$
The Riemannian Hessian $\hessian f(\x):T_\x\M\rightarrow T_\x\M$ is defined as
$$
\begin{aligned}
\hessian f(\x)[\vvec]&=\Pvec(\x)(\partial_\vvec \grad f(\x))\\
&= \Pvec(\x) (\nabla^2 f(\x)\vvec-\nabla^2\Cvec(\x)\vvec (\A(\x)^\top \A(\x))^{-1}\A(\x)^\top \nabla f(\x)),
\end{aligned}
$$
which is a self-adjoint operator on Hilbert space $T_\x\M$.

A point $\x^*\in \M$ is called a critical point of $f(\x)$ subject to equality constraints $\Cvec(\x)=\mathbf{0}$ if $\gradient f(\x^*)=0$. Moreover, a critical point $\x^*\in \M$ is called an index-$k$ saddle point if $\hessian f(\x^*)$ has exactly $k$ ($k\leq d-m-1$) negative eigenvalues. 

The eigenvectors associated with the first $k$ smallest eigenvalues of $\hessian f(\x)$ can be characterized by the following sequence of Rayleigh quotient problems:
$$
	\min_{{\vvec}_i\in T_\x\M}\text{  }\left<\vvec_i, \hessian f(\x)[\vvec_i]\right>,\ \text{s.t.}\ \left<\vvec_i,\vvec_j\right>=\delta_{ij}, \ j=1,2,\cdots,i,
$$
or equivalently, we deal with another constrained optimization problem:
\begin{equation}\label{eq: rayleigh qutient of CSD}
\min_{{\vvec}_i\in \mathbb{R}^n}\text{  }\left<\vvec_i, \tilde{\hessian} f(\x)[\vvec_i]\right>,\ \text{s.t.}\A(\x)^\top \vvec_i=0,\ \left<\vvec_i,\vvec_j\right>=\delta_{ij}, \ j=1,2,\cdots,i,
\end{equation}
where
$$
\tilde{\hessian}f(\x)[\vvec]=\hessian f(\x)[\Pvec(\x)\vvec], \vvec \in \mathbb{R}^d,
$$
is the extension of the definition domain of the Riemannian Hessian to the Euclidean space. 

The constrained saddle dynamics (CSD) \cite{zhang2012constrained,liu2023constrained,yin2020constrained} for locating an index-$k$ saddle point on a manifold are defined by
\begin{equation}
    \left\{
    \begin{aligned}
		\dot{\x}=&- \left(\I-2\sum_{i=1}^k {\vvec}_i{\vvec}_i^\top\right)\gradient f(\x), \\
		\dot{\vvec}_i=&-   \left(\I-{\vvec}_i{\vvec}_i^\top-\sum_{j=1}^{i-1}2{\vvec}_j{\vvec}_j^\top\right)\tilde{\hessian} f(\x) [\vvec_i]\\
        &-\A(\x)(\A(\x)^\top \A(\x))^{-1}(\nabla^2 \Cvec(\x)\dot{\x})^\top\vvec_i,\ i=1,\cdots,k.\\
    \end{aligned}
    \right.
	\label{eq: SD}
\end{equation}
Unlike the gradient flow on a manifold, which evolves $-\grad f(\x)$ and therefore decreases the objective value, CSD apply a sequence of Householder reflections to the negative gradient in the $k$ unstable directions. Consequently, the energy increases along unstable directions while still decreasing along stable directions, allowing the dynamics to converge to saddle points.

In \eqref{eq: SD}, the first term in the dynamics of $\vvec_i$ can be interpreted as a (projected) gradient flow of \eqref{eq: rayleigh qutient of CSD} that identifies the $i$-th unstable eigenvector direction. The second term enforces the tangency constraint $\vvec_i \in T (\x)$ and corresponds to the parallel transport induced by the Riemannian (Levi-Civita) connection \cite{boumal2023introduction}. In fact, along a smooth curve $\x(t)\in \M$, the tangent space $T_{\x(t)}\M$ changes accordingly. The correction term
$$
\dot{\vvec}=-\A(\x)(\A(\x)^\top \A(\x))^{-1}(\nabla^2 \Cvec(\x)\dot{\x})^\top\vvec,
$$
transports an initial tangent vector $\vvec(0)\in T(\x(0))$ to $\vvec(t)\in T(\x(t))$ along the trajectory $\x(t)$ in a parallel manner, i.e., $\nabla_{\dot{\x}}\vvec(t)=0.$ With straightforward calculations as in Proposition \ref{proposition: x on the manifold}, it can be verified that the CSD with the initial condition $\x(0) \in \M, \vvec_i(0)\in T_{\x(0)}\M,i=1,\cdots,k$ satisfies $\x(t) \in \M, \vvec_i(t)\in T_{\x(t)}\M,i=1,\cdots,k$ for $t\geq 0$.

\section{Momentum-Based Constrained Saddle Dynamics}\label{sec: MCSD}

Since CSD can be regarded as a special gradient-based method, it is natural to expect that, as in the gradient descent methods, the convergence rate of discrete CSD is slow when the problem is ill-conditioned (see Section \ref{sec: CSD with exact eigenvector}). This motivates the momentum-based constrained saddle dynamics (MCSD) that is capable of accelerating the convergence. In this section, we introduce MCSD and study its properties.

By introducing the momentum, saddle dynamics with Polyak heavy ball \cite{polyak1964some, nesterov2013introductory,luo2025accelerated} in Euclidean space is defined as 
$$
\ddot{\x}+\gamma_1 \dot{\x}+\gamma_2\left(\I-2\sum_{i=1}^k {\vvec}_i{\vvec}_i^\top\right)\nabla f(\x)=0,
$$
where $\gamma_1,\gamma_2>0$. Accordingly, by replacing the Euclidean acceleration $\ddot{\x}$ with the covariant derivative induced by the Levi-Civita connection, and $\nabla f(\x)$ with $\grad f(\x)$, saddle dynamics with Polyak heavy ball on a manifold is defined as 
\begin{equation}
\frac{D}{dt}\dot{\x}+\gamma_1 \dot{\x}+\gamma_2\left(\I-2\sum_{i=1}^k {\vvec}_i{\vvec}_i^\top\right)\gradient f(\x)=0,
\label{heavy ball second order}
\end{equation}
where $\frac{D}{dt}$ denotes the covariant derivative of a vector field $\zeta(t)$ along a smooth curve $\x(t)\in \M$. Viewing $\zeta(t)\in T_{\x(t)}\M$ as an ambient vector-valued function, $\frac{D}{dt}$ is defined by
$$\frac{D}{dt}\zeta(t)=\Pvec(\x)\frac{d}{dt}\zeta(t)=\Pvec(\x)\dot{\zeta}(t),$$
where $\dot{\zeta}(t)=\frac{d}{dt}\zeta(t)$ is the ordinary derivative in the ambient Euclidean space.

Define the velocity as $\rvec=\dot{\x}\in T_{\x}\M$ and add the $\vvec$-dynamics, the second-order differential equation in \eqref{heavy ball second order} is converted into the following first-order system
\begin{equation}\label{momentum on the manifold}
\left\{
\begin{aligned}
&\dot{\x}=\rvec,\\
&\frac{D}{dt}\rvec=-\gamma_1 \rvec -\gamma_2\left(\I-2\sum_{i=1}^k {\vvec}_i{\vvec}_i^\top\right)\gradient f(\x),\\
&\dot{\vvec}_i=-   \left(\I-{\vvec}_i{\vvec}_i^\top-\sum_{j=1}^{i-1}2{\vvec}_j{\vvec}_j^\top\right)\tilde{\hessian} f(\x) [\vvec_i]\\
&\qquad-\A(\x)(\A(\x)^\top \A(\x))^{-1}(\nabla^2 \Cvec(\x)\dot{\x})^\top \vvec_i,\ i=1,\cdots,k .
\end{aligned}
\right.
\end{equation}

\begin{proposition}\label{proposition: x on the manifold}
    If the initial conditions satisfy $\x(0)\in \M, \rvec(0)\in T_{\x(0)}\M, \vvec_i(0)\in T_{\x(0)}\M, i=1,\cdots,k$, then $\x(t), \rvec(t), \vvec_i(t),i=1,\cdots,k$ induced by \eqref{momentum on the manifold} satisfy $\x(t)\in \M, \rvec(t)\in T_{\x(t)}\M, \vvec_i(t)\in T_{\x(t)}\M, i=1,\cdots,k$ for $t\geq 0$.
\end{proposition}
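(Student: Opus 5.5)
The plan is to show that the constraint residuals obey a linear ODE with zero initial data, so they vanish for all time. The natural quantities are
$$
\Cvec(\x(t)),\qquad \A(\x(t))^\top\rvec(t),\qquad \A(\x(t))^\top\vvec_i(t),\quad i=1,\cdots,k .
$$
Each is zero at $t=0$ by hypothesis. I would differentiate each one along \eqref{momentum on the manifold} and show that its derivative is zero, or at least a linear function of the residuals themselves. Uniqueness for the resulting linear ODE then forces all residuals to stay zero.

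\textbf{Step 1: the position.} We have $\frac{d}{dt}\Cvec(\x)=\A(\x)^\top\dot{\x}=\A(\x)^\top\rvec$. So $\x(t)\in\M$ follows once we know $\A^\top\rvec\equiv 0$. This suggests treating $\Cvec(\x)$ and $\A^\top\rvec$ jointly.

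\textbf{Step 2: the velocity.} The equation $\frac{D}{dt}\rvec=\cdots$ is read in ambient form as
$$
\dot\rvec=\Pvec(\x)\bigl(-\gamma_1\rvec-\gamma_2(\I-2\textstyle\sum_i\vvec_i\vvec_i^\top)\gradient f(\x)\bigr)+\text{(normal component)}.
$$
The normal component is not specified by the covariant equation. I would make the standard choice that keeps $\rvec$ tangent, namely $-\A(\A^\top\A)^{-1}(\nabla^2\Cvec(\x)\dot\x)^\top\rvec$. This is the same correction term that appears in the $\vvec$-equation, and it is the one implicit in the Levi-Civita interpretation.

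With this choice, differentiating $\A^\top\rvec$ gives
$$
\frac{d}{dt}\bigl(\A^\top\rvec\bigr)=(\nabla^2\Cvec\,\dot\x)^\top\rvec+\A^\top\dot\rvec .
$$
Two facts make this vanish. First, $\A^\top\Pvec=0$, so the projected part contributes nothing. Second, $\A^\top\A(\A^\top\A)^{-1}=\I$, so the correction contributes exactly $-(\nabla^2\Cvec\,\dot\x)^\top\rvec$, cancelling the first term. Hence $\frac{d}{dt}(\A^\top\rvec)=0$ identically, and with zero initial data $\A^\top\rvec\equiv0$. Step 1 then gives $\frac{d}{dt}\Cvec(\x)=0$, so $\Cvec(\x(t))=0$.

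\textbf{Step 3: the eigenvectors.} Compute
$$
\frac{d}{dt}\bigl(\A^\top\vvec_i\bigr)=(\nabla^2\Cvec\,\dot\x)^\top\vvec_i+\A^\top\dot\vvec_i .
$$
The transport term again cancels the first piece exactly. What remains is $-\A^\top(\I-\vvec_i\vvec_i^\top-2\sum_{j<i}\vvec_j\vvec_j^\top)\tilde\hessian f(\x)[\vvec_i]$. Since $\tilde\hessian f$ lies in the range of $\Pvec$, we have $\A^\top\tilde\hessian f[\vvec_i]=0$. This leaves
$$
\frac{d}{dt}\bigl(\A^\top\vvec_i\bigr)=(\A^\top\vvec_i)\,\vvec_i^\top\tilde\hessian f[\vvec_i]+2\sum_{j<i}(\A^\top\vvec_j)\,\vvec_j^\top\tilde\hessian f[\vvec_i].
$$
Set $w_i=\A^\top\vvec_i$. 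This is a triangular linear system $\dot w_i=a_{ii}(t)w_i+\sum_{j<i}a_{ij}(t)w_j$ with continuous coefficients along the solution and $w_i(0)=0$. By induction on $i$ (or Gr\"onwall applied to $\sum_i\|w_i\|$), $w_i\equiv0$ for all $i$.

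\textbf{Main obstacle.} The delicate point is Step 2: pinning down the ambient meaning of $\frac{D}{dt}\rvec$ so that the normal component cancels. Everything else is routine, relying only on $\A^\top\Pvec=0$ and the cancellation produced by the transport term. The argument holds on the whole existence interval of the smooth solution, which is what is claimed for $t\geq0$.
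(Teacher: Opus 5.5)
Your argument is correct. Steps 1 and 3 coincide with the paper's proof; the paper handles the triangular system for $\A(\x)^\top\vvec_i$ by induction on $i$, dropping the $j<i$ terms via the inductive hypothesis, which is the same thing.

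The genuine difference is how you obtain $\rvec\in T_{\x}\M$. The paper never specifies a normal component of $\dot\rvec$. Having first shown the $\vvec_i$ are tangent, it applies $\A(\x)^\top$ directly to the covariant equation $\Pvec(\x)\dot\rvec=-\gamma_1\rvec-\gamma_2(\I-2\sum_i\vvec_i\vvec_i^\top)\gradient f(\x)$. The left side and the gradient/reflection terms are annihilated, leaving $\gamma_1\A(\x)^\top\rvec=0$ pointwise in time. This one-line algebraic consistency argument works for any curve satisfying \eqref{momentum on the manifold} and does not even use $\rvec(0)\in T_{\x(0)}\M$: tangency is forced by the equation itself. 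However, it presupposes that a solution of this differential-algebraic system exists, it needs $\gamma_1\neq0$, and it must come after the $\vvec_i$ step.

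Your route instead fixes an honest ambient ODE. This is the same lift the paper derives afterwards via the multiplier $\vartheta$ in \eqref{dynamics with momentum with Euclidean Language}, up to your extra $\Pvec(\x)$, which is immaterial on the constraint set. Existence then comes from standard ODE theory, and you prove $\frac{d}{dt}(\A^\top\rvec)=0$ as a conservation law. This works for any $\gamma_1\ge0$ and makes Step 2 independent of Step 3.

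To close the loop, add one sentence. Once $\rvec$ and the $\vvec_i$ are tangent, the force $-\gamma_1\rvec-\gamma_2(\I-2\sum_i\vvec_i\vvec_i^\top)\gradient f(\x)$ is tangent. Hence $\Pvec(\x)$ acts on it as the identity, and your lifted solution genuinely satisfies $\frac{D}{dt}\rvec=\cdots$ in \eqref{momentum on the manifold}.
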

\begin{proof}
We prove $\vvec_i(t)\in T_{\x(t)}\M,i=1,\cdots,k$ by induction. For $\vvec_1(t)$, we have that $\A(\x(t))^\top \vvec_1(t)$ satisfies the following ODE
    $$
    \begin{aligned}
    &\frac{d}{dt}(\A(\x)^\top\vvec_1)=(\nabla^2 \Cvec(\x)\dot{\x})^\top\vvec_1+\A(\x)^\top\dot{\vvec}_1\\
    &=(\nabla^2 \Cvec(\x)\rvec)^\top\vvec_1-\A(\x)^\top\left(\I-{\vvec}_1{\vvec}_1^\top\right)\tilde{\hessian} f(\x) [\vvec_1]\\
&\qquad-\A(\x)^\top\A(\x)(\A(\x)^\top \A(\x))^{-1}(\nabla^2 \Cvec(\x)\rvec)^\top \vvec_1\\
&= -\A(\x)^\top\tilde{\hessian} f(\x) [\vvec_1]+(\vvec_1^\top\tilde{\hessian} f(\x) [\vvec_1]) \A(\x)^\top \vvec_1\\
&= \vvec_1^\top\tilde{\hessian} f(\x) [\vvec_1] \A(\x)^\top \vvec_1,
    \end{aligned}
    $$
with initial condition $\A(\x(0))^\top\vvec_1(0)=0$ and hence $\A(\x(t))^\top\vvec_1(t)\equiv0$, i.e. $\vvec_1(t)\in T_{\x(t)}\M$, where the last equality follows from that $\tilde{\hessian} f(\x) [\vvec_1]\in T_{\x}\M$. Suppose $\vvec_i(t)\in T_{\x(t)\M},1\leq i\leq i_0-1\leq k-1$, for $\vvec_{i_0}$, we have that
    $$
    \begin{aligned}
    &\frac{d}{dt}(\A(\x)^\top\vvec_{i_0})=(\nabla^2 \Cvec(\x)\dot{\x})^\top\vvec_{i_0}+\A(\x)^\top\dot{\vvec}_{i_0}\\
    &=(\nabla^2 \Cvec(\x)\rvec)^\top\vvec_{i_0}-\A(\x)^\top\left(\I-{\vvec}_{i_0}{\vvec}_{i_0}^\top-\sum_{j=1}^{i_0-1}2{\vvec}_j{\vvec}_j^\top\right)\tilde{\hessian} f(\x) [\vvec_{i_0}]\\
&\qquad-\A(\x)^\top\A(\x)(\A(\x)^\top \A(\x))^{-1}(\nabla^2 \Cvec(\x)\rvec)^\top \vvec_{i_0}\\
&= -\A(\x)^\top\tilde{\hessian} f(\x) [\vvec_{i_0}]+(\vvec_{i_0}^\top\tilde{\hessian} f(\x) [\vvec_{i_0}]) \A(\x)^\top \vvec_{i_0}\\
&= \vvec_{i_0}^\top\tilde{\hessian} f(\x) [\vvec_{i_0}] \A(\x)^\top \vvec_{i_0},
    \end{aligned}
    $$
with initial condition $\A(\x(0))^\top\vvec_{i_0}(0)=0$ and hence $\A(\x(t))^\top\vvec_{i_0}(t)\equiv0$, i.e. $\vvec_{i_0}(t)\in T_{\x(t)}\M$.
Since $$\Pvec(\x)\dot{\rvec}=-\gamma_1 \rvec -\gamma_2\left(\I-2\sum_{i=1}^k {\vvec}_i{\vvec}_i^\top\right)\gradient f(\x),$$
left-multiplying both sides by $\A(\x)^\top$ and using $\A(\x)^\top\Pvec(\x)=0$ and $\A(\x)^\top \grad f(\x)=0$ gives $0=-\gamma_1 \A(\x)^\top \rvec$, i.e., $\rvec(t) \in T_{\x(t)}\M$. Then, $$\frac{d}{dt}\Cvec(\x)=\A(\x)^\top\dot{\x}=\A(\x)^\top \rvec=0$$
with $\Cvec(\x(0))=0$, and hence, $\Cvec(\x(t))\equiv 0$ which means $\x(t)\in \M$ for all $t\geq0$.
\end{proof}

Since the manifold $\M$ is embedded in Euclidean space, we can express the covariant derivative on the manifold in terms of Euclidean derivatives. We need to introduce the Lagrange multiplier to eliminate the component in the normal space. Consider the dynamics of $\rvec$ with Lagrange multiplier
$$
\dot{\rvec}=-\gamma_1 \rvec -\gamma_2\left(\I-2\sum_{i=1}^k {\vvec}_i{\vvec}_i^\top\right)\gradient f(\x)+\A(\x)\vartheta.
$$
To satisfy the constraint $\rvec \in T_\x\M$, $\frac{d}{dt}(\A(\x)^\top \rvec)=\A(\x)^\top \dot{\rvec}+(\nabla^2\Cvec(\x)\dot{\x})^\top\rvec=0$, we should choose  
$$
\vartheta=-(\A(\x)^\top \A(\x))^{-1}(\nabla^2 \Cvec(\x)\dot{\x})^\top \rvec,
$$
and \eqref{momentum on the manifold} is equivalent to the following MCSD defined in the Euclidean space
\begin{equation}\label{dynamics with momentum with Euclidean Language}
\left\{
\begin{aligned}
&\dot{\x}=\rvec,\\
&\dot{\rvec}=-\gamma_1 \rvec -\gamma_2\left(\I-2\sum_{i=1}^k {\vvec}_i{\vvec}_i^\top\right)\gradient f(\x)\\
&\qquad -\A(\x)(\A(\x)^\top \A(\x))^{-1}(\nabla^2 \Cvec(\x)\dot{\x})^\top \rvec,\ i=1,\cdots,k ,\\
&\dot{\vvec}_i=-   \left(\I-{\vvec}_i{\vvec}_i^\top-\sum_{j=1}^{i-1}2{\vvec}_j{\vvec}_j^\top\right)\tilde{\hessian} f(\x) [\vvec_i]\\
&\qquad-\A(\x)(\A(\x)^\top \A(\x))^{-1}(\nabla^2 \Cvec(\x)\dot{\x})^\top \vvec_i,\ i=1,\cdots,k .
\end{aligned}
\right.
\end{equation}
Although MCSD in \eqref{dynamics with momentum with Euclidean Language} keep the constraint $\x(t)\in \M, \rvec(t) \in T_{\x(t)}\M,\vvec_i(t)\in T_{\x(t)}\M, i=1,\cdots,k$, if the initial conditions satisfy the constraint (Proposition \ref{proposition: x on the manifold}), a perturbation may easily deviate $\x$ away from the manifold. To achieve the linear stability, that is, with a small perturbation to $\x^*$ (which deviates $\x^*$ away from the manifold), the dynamics lead $\x(t)$ come back to $\x^*$, an additional stability term, $-\mu \sum_{i=1}^m c_i(\x)\nabla c_i(\x)$ with $\mu>0$, is added to the $\x$-dynamic, which can be seen as the gradient flow of $\|\Cvec(\x)\|_2^2$. This results in the following dynamics
\begin{equation}
\left\{
\begin{aligned}
&\dot{\x}=\rvec-\mu \sum_{i=1}^m c_i(\x)\nabla c_i(\x),\\
&\dot{\rvec}=-\gamma_1 \rvec -\gamma_2\left(\I-2\sum_{i=1}^k {\vvec}_i{\vvec}_i^\top\right)\gradient f(\x)\\
&\qquad -\A(\x)(\A(\x)^\top \A(\x))^{-1}(\nabla^2 \Cvec(\x)\dot{\x})^\top \rvec,\\
&\dot{\vvec}_i=-   \left(\I-{\vvec}_i{\vvec}_i^\top-\sum_{j=1}^{i-1}2{\vvec}_j{\vvec}_j^\top\right)\tilde{\hessian} f(\x) [\vvec_i]\\
&\qquad-\A(\x)(\A(\x)^\top \A(\x))^{-1}(\nabla^2 \Cvec(\x)\dot{\x})^\top \vvec_i,\ i=1,\cdots,k .
\end{aligned}
\right.
\label{modified ACHiSD}
\end{equation}

\begin{remark}
    If $\x \in \M$, that is, $c_i(\x)=0, i=1,\cdots,m$, then \eqref{modified ACHiSD} degenerates to \eqref{dynamics with momentum with Euclidean Language}.
\end{remark}

\begin{proposition}[Local convergence of the MCSD]
    Assume $\x^*\in \M, \{\vvec_i^*\}_1^{d-m}\in T_{\x^*}\M$ satisfies $\|\vvec_i\|=1$ and the eigenvalues of $\hessian f(\x^*)$ are  \begin{equation}
    \lambda_1^*<\cdots<\lambda_k^*<0< \lambda_{k+1}^*\leq \cdots \leq \lambda_{d-m}^*.
    \label{eq: eigengap}
    \end{equation} Then $(\x^*,\rvec^*,\vvec_1^*,\cdots,\vvec_k^*)$ is a linearly stable steady state of \eqref{modified ACHiSD}, if and only if $\x^*$ is an index-$k$ saddle point, $\rvec^*=0$, and $\hessian f(\x^*)[\vvec^*_i]=\lambda_i^*\vvec_i^*, i=1,\cdots,k$.
\end{proposition}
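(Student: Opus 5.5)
We first identify the steady states and then linearize \eqref{modified ACHiSD} at $(\x^*,\rvec^*,\vvec_1^*,\cdots,\vvec_k^*)$, showing that the resulting Jacobian $J$ is block upper triangular after a suitable ordering of variables. Its spectrum then splits into that of an $(\x,\rvec)$-block and a $\vvec$-block.

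\textbf{Steady states.} At a steady state with $\x^*\in\M$, the $\x$-equation gives $\rvec^*=0$. The $\rvec$-equation then gives $(\I-2\sum_i\vvec_i^*\vvec_i^{*\top})\gradient f(\x^*)=0$, and since this Householder product is invertible, $\gradient f(\x^*)=0$. The $\vvec$-equations with $\dot\x=0$ give $(\I-\vvec_i^*\vvec_i^{*\top}-2\sum_{j<i}\vvec_j^*\vvec_j^{*\top})\hessian f(\x^*)[\vvec_i^*]=0$. By induction on $i$, exactly as in the standard HiSD argument, this forces $\hessian f(\x^*)[\vvec_i^*]=\lambda\vvec_i^*$ with the $\vvec_i^*$ orthonormal eigenvectors.

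\textbf{Linearization.} Write $\x=\x^*+\delta\x$, etc. Because $\rvec^*=0$, the quadratic terms $(\nabla^2\Cvec\,\dot\x)^\top\rvec$ and $(\nabla^2\Cvec\,\dot\x)^\top\vvec_i$ (note $\dot\x$ is first order) contribute only second-order terms to the $\rvec$-equation. In the $\vvec$-equation, however, they give a first-order term, which only couples $\vvec$ to $(\delta\x,\delta\rvec)$. Since $\gradient f(\x^*)=0$, the $\rvec$-equation does not depend on $\delta\vvec$ at first order. Hence $J$ is block upper triangular: $(\delta\x,\delta\rvec)$ evolve autonomously, and the $\vvec$-block is the linearization of the $\vvec$-dynamics at fixed $\x^*$.

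The $(\x,\rvec)$-block reads
\[
\delta\dot\x=\delta\rvec-\mu\A\A^\top\delta\x,\qquad \delta\dot\rvec=-\gamma_1\delta\rvec-\gamma_2\mathcal{R}\,D(\gradient f)(\x^*)\,\delta\x,
\]
where $\mathcal{R}=\I-2\sum_i\vvec_i^*\vvec_i^{*\top}$ and $D(\gradient f)(\x^*)=D(\Pvec\nabla f)$. We decompose $\mathbb{R}^d=T_{\x^*}\M\oplus N_{\x^*}\M$. On the normal part, using $\A^\top\delta\rvec$: differentiating $\A(\x)^\top\gradient f(\x)=0$ at $\x^*$ shows $\A^\top D(\gradient f)=0$, so $\A^\top\delta\dot\rvec=-\gamma_1\A^\top\delta\rvec$. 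The normal component of $\x$ is damped by $-\mu\A\A^\top$ (positive definite on $N$ by LICQ). On the tangent part, $\Pvec D(\gradient f)\Pvec=\hessian f(\x^*)$. The resulting matrix is block triangular with respect to (normal, tangent), possibly with normal-to-tangent coupling from $D(\gradient f)$ applied to normal directions, so its eigenvalues are those of $-\mu\A\A^\top|_N$, $-\gamma_1$ on the normal $\rvec$ part, and the tangent pairs. In the eigenbasis $\vvec_j^*$ of $\hessian f(\x^*)$, $\mathcal R\hessian f$ has eigenvalues $|\lambda_j^*|$ for $j\le k$ only if the $\vvec_i^*$ are eigenvectors with $\lambda_i^*<0$. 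Each tangent mode is then $s^2+\gamma_1 s+\gamma_2 \sigma_j=0$ with $\sigma_j=\pm\lambda_j^*$. Both roots have negative real part iff $\sigma_j>0$.

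\textbf{The $\vvec$-block.} This is the standard HiSD computation restricted to $T_{\x^*}\M$, using the tangency invariance from Proposition \ref{proposition: x on the manifold}. It has eigenvalues $\lambda_i^*-\lambda_j^*$ for $j>i$, plus $-2\lambda_i^*$-type terms from the normalization direction, all negative iff $\vvec_1^*,\dots,\vvec_k^*$ are eigenvectors for the $k$ smallest eigenvalues, in order. The strict gap in \eqref{eq: eigengap} is used here.

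\textbf{Conclusion.} Stability of all blocks holds iff every $\sigma_j>0$ and the $\vvec$-block condition holds, i.e., iff the $\vvec_i^*$ span the negative eigenspace, $\x^*$ is index-$k$, and $\rvec^*=0$. For the converse direction, if some $\vvec_i^*$ corresponds to a positive eigenvalue, or a negative eigenvalue is left unreflected, then some $\sigma_j<0$ and a real positive root $s$ appears.

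\textbf{Main obstacle.} I expect the delicate step to be the $(\x,\rvec)$-block: justifying the normal/tangent triangular structure, including the $D\Pvec$ terms and the off-manifold perturbation. The $\vvec$-block degeneracy along the constraint directions $\A^\top\vvec_i$ and normalization also needs care. I would handle these by restricting the linearization to the invariant variations, or by checking that those extra directions give eigenvalues $-\gamma_1$, $-\mu\cdot$ (positive), or nonpositive multiples of $\lambda^*$.
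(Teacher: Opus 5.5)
Your route is the paper's: characterize the steady states ($\rvec^*=0$ plus the inductive eigenvector argument), note that $\gradient f(\x^*)=0$ removes the first-order dependence of $\dot\rvec$ on $\vvec$, and split the resulting block-triangular Jacobian into an autonomous $(\x,\rvec)$-block and a driven $\vvec$-block. You then decompose the $(\x,\rvec)$-block into normal and tangent parts, giving $-\tau_j$, $-\gamma_1$ and the pairs $s^2+\gamma_1 s+\gamma_2\sigma_j$.

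That part is sound, and in two places more careful than the paper. First, you allow normal perturbations of $\x$ to feed into the tangential part of $\rvec$. The paper asserts $Q^\top J^*_{\rvec\x}Q$ is diagonal, which need not hold, although its block-triangular conclusion survives. Second, you note that the transport term couples $\vvec_i$ to $\rvec$ at first order; the paper's displayed Jacobian has zeros there, harmlessly.

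One ordering slip: you use invertibility of $\I-2\sum_i\vvec_i^*{\vvec_i^*}^\top$ before proving the $\vvec_i^*$ are orthonormal. For unit vectors alone it can be singular (two unit vectors at angle $\pi/3$). So run the $\vvec$-induction first, as the paper does, and only then deduce $\gradient f(\x^*)=0$.

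The real gap is the $\vvec$-block, which you defer to ``standard HiSD'' and then state incorrectly.
\begin{itemize}
\item The eigenvalue in the normalization direction is $2\lambda_i^*$, not $-2\lambda_i^*$. With $\lambda_i^*<0$ your value is positive and would make the ``$\Longleftarrow$'' direction false.
\item You omit the eigenvalues $\lambda_i^*+\lambda_j^*$ for $j<i$.
\item The constraint directions give $\lambda_i^*$ itself. They do not give the $-\gamma_1$ or $-\mu$-type values you anticipate (those belong to the $(\x,\rvec)$-block), nor ``nonpositive multiples of $\lambda^*$''.
\item Restricting to tangent, unit-norm variations is not an adequate substitute. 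Linear stability of \eqref{modified ACHiSD} refers to the full Jacobian, and the invariance in Proposition \ref{proposition: x on the manifold} presupposes $\x\in\M$, which the perturbations violate.
\end{itemize}

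What is needed is the paper's direct computation. Since $\dot\vvec_i$ depends only on $\vvec_1,\dots,\vvec_i$, the $\vvec$-block is block lower triangular with diagonal blocks
\[
J_{ii}^*=-\Bigl(\I-2\sum_{j\le i}\vvec_j^*{\vvec_j^*}^\top\Bigr)\tilde{\hessian} f(\x^*)+\lambda_i^*\I .
\]
The transport term drops out because $\dot\x=0$ at the steady state. $J_{ii}^*$ has the following eigenvectors:
\begin{itemize}
\item $\nabla c_l(\x^*)$ with eigenvalue $\lambda_i^*$, since $\tilde{\hessian} f$ annihilates normal vectors;
\item $\vvec_j^*$ for $j\le i$ with eigenvalue $\lambda_i^*+\lambda_j^*$;
\item $\vvec_j^*$ for $j>i$ with eigenvalue $\lambda_i^*-\lambda_j^*$.
\end{itemize}
There is no degeneracy to handle, and all of these are negative under \eqref{eq: eigengap}. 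For the ``$\Longrightarrow$'' direction, write $\mu_i^*$ for the eigenvalue of $\vvec_i^*$ at a given steady state. The eigenvalues $\mu_i^*-\mu_j^*$, $i<j\le k$, are exactly what force $\mu_1^*<\cdots<\mu_k^*$, and hence $\mu_i^*=\lambda_i^*$.
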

\begin{proof}
To determine the linear stability of \eqref{modified ACHiSD}, we calculate its Jacobian operator as follows
$$
J=\frac{\partial(\dot{\x},\dot{\rvec},\dot{\vvec}_1,\cdots,\dot{\vvec}_k)}{\partial(\x,\rvec,\vvec_1,\cdots,\vvec_k)}=\begin{pmatrix}
    J_{\x \x} & \I & \mathbf{0} & \mathbf{0} & \cdots & \mathbf{0} \\
    J_{\rvec\x} & -\gamma_1\I & J_{\rvec1} & J_{\rvec2} & \cdots & J_{\rvec k} \\
    J_{1\x} & \mathbf{0} & J_{11} & \mathbf{0} & \cdots & \mathbf{0} \\
    J_{2\x} & \mathbf{0} & J_{21} & J_{22} & \cdots & \mathbf{0} \\
    \vdots & \vdots &\vdots &\vdots &\vdots &\vdots \\
    J_{k\x} & \mathbf{0} & J_{k1} & J_{k2} & \cdots & J_{kk}
\end{pmatrix},
$$
where 
$$
\begin{aligned}
&J_{\x\x}=\frac{\partial \dot{\x}}{\partial \x}=-\mu \sum_{i=1}^m (c_l(\x)\nabla^2c_l(\x)+\nabla c_l(\x)\nabla c_l(\x)^\top), \\
&J_{\rvec\x}=\frac{\partial \dot{\rvec}}{\partial \x}=-\gamma_2\left(\I-2\sum_{i=1}^k {\vvec}_i{\vvec}_i^\top\right)(\mathbf{H}(\x)-\A(\x)(\A(\x)\A(\x))^{-1}(\nabla^2 \Cvec (\x) \gradient f(\x))^\top)\\
&\qquad \qquad\qquad- \partial_\x (\A(\x)(\A(\x)^\top \A(\x))^{-1}(\nabla^2 \Cvec(\x)\dot{\x})^\top)\rvec,\\
& J_{\rvec i}= \frac{\partial \dot{\rvec}}{\partial \vvec_i}= 2\gamma_2 (\vvec_i^\top \gradient f(\x) \I+\vvec_i \gradient f(\x)^\top),\\
& J_{ii}=\frac{\partial \dot{\vvec}_i}{\partial \vvec_i}=-\left(\I-2\sum_{j=1}^i {\vvec}_j{\vvec}_j^\top\right)\tilde{\hessian}f(\x)+\langle 
\vvec_i, \tilde{\hessian}f(\x)[\vvec_i] \rangle \I \\
&\qquad \qquad\qquad-\A(\x)(\A(\x)^\top \A(\x))^{-1} (\nabla^2 \Cvec(\x)\dot{\x})^\top,
\end{aligned}
$$
with
$$
\mathbf{H}(\x)=\Pvec(\x)(\nabla^2 f(\x)-\nabla^2 \Cvec(\x)(\A(\x)\A(\x))^{-1}\A(\x)^\top\nabla f(\x)).
$$
In the following, $J(\x^*, \rvec^*, \vvec_1^*, \cdots , \vvec^*_k)$ is denoted as $J^*$ and the blocks of $J^*$ are denoted with corresponding subscripts.

``$\Longleftarrow$": Supposing that $\x^*$ is an index-$k$ saddle point, $\rvec^*=0$, and $\hessian f(\x^*)[\vvec_i^*]=\lambda_i^*\vvec_i^*, i = 1, \cdots, k$, we  have $\gradient f(\x^*) = 0$, and $J_{\rvec i}^*=\mathbf{0}$,
$$
\begin{aligned}
& J_{\x\x}^*= -\mu \sum_{i=1}^m \nabla c_l(\x^*)\nabla c_l(\x^*)^\top,  \\
&J_{\rvec\x}^*=-\gamma_2\left(\I-2\sum_{i=1}^k {\vvec}_i^*{\vvec_i^*}^\top\right)\mathbf{H}(\x^*),\\
&J_{ii}^*=-\left(\I-2\sum_{j=1}^i {\vvec}_i^*{\vvec_i^*}^\top\right)\tilde{\hessian}f(\x^*)+\lambda_i^*\I.\\
\end{aligned}
$$ 
Hence, $J^*$ is in the form of 
\begin{equation}
J^*=\begin{pmatrix}
    D_1 & \mathbf{0} \\
    J_{\vvec\x}^* & D_2
\end{pmatrix}, D_1=\begin{pmatrix}
    J_{\x\x}^* & \mathbf{I}\\
    J_{\rvec \x}^* & -\gamma_1\I
\end{pmatrix}
, D_2=\begin{pmatrix}
     J_{11}^* & \mathbf{0} & \cdots & \mathbf{0} \\
     J_{21}^*  & J_{22}^*  & \cdots & \mathbf{0} \\
    \vdots &\vdots &\vdots &\vdots \\
    J_{k1}^*  & J_{k2}^*  & \cdots & J_{kk}^*
\end{pmatrix}.
\label{eq: diag of J star}
\end{equation}
Therefore the eigenvalues of $J^*$ is completely determined by that of $D_1$ and $D_2$, and the eigenvalues of $D_2$ is completely determined by that of $J_{ii}^*$. Note that
\begin{equation}
J_{ii}^*\nabla c_j(\x^*)\!=\!\lambda_i^*\nabla c_j(\x^*), 1\!\leq\! j\leq m,\,
J_{ii}^*\vvec_j^*\!=\!\begin{cases}
(\lambda_i^*+\lambda_j^*)\vvec_j^*, 1\!\leq\! j\!\leq\! i,\\
(\lambda_i^*-\lambda_j^*)\vvec_j^*,i\!<\!j\!\leq \!d-m;
\end{cases} 
\label{eq: eigen Jii}
\end{equation}
which means the $d$ eigenvalues of $J_{ii}^*$ are $\lambda_i^*+\lambda_1^*, \cdots,\lambda_i^*+\lambda_i^*,\lambda_i^*-\lambda^*_{i+1},\cdots,\lambda_i^*-\lambda_{d-m}^*$ and $\lambda_i^*$ with multiplicity $m$, and all of them are negative. Now, we only need to consider the eigenvalues of $D_1$. Since $J_{\x \x}^*$ is a symmetric matrix whose eigenvalues are non-positive with $m$ negative eigenvalues and $d-m$ zero eigenvalues, we assume that its spectrum decomposition is $J_{\x \x}^*= V_c\mathrm{diag}(-\tau_1,\cdots,-\tau_m)V_c^\top$, where $V_c$ is an orthogonal matrix whose column is the linear composition of $\nabla c_l(\x^*)$ and $\tau_i>0,i=1,\cdots,m$. Then, $Q=(V_c,R^*)$ with $R^*=(\vvec_1^*,\cdots,\vvec_k^*)$, is also an orthogonal matrix. This is because, $\vvec_i^*\in T_{\x^*}\M$ for $1\leq i\leq k$, so we have $\nabla c_l(\x^*)^\top \vvec_i^*=0$, for $ 1\leq l\leq m$. We have that
$$
\tilde{D}_1=\begin{pmatrix}
    Q & \mathbf{0}\\
    \mathbf{0}  & Q
\end{pmatrix}^\top
D_1\begin{pmatrix}
    Q & \mathbf{0}\\
    \mathbf{0}  & Q
\end{pmatrix}=\begin{pmatrix}
    Q^\top J_{\x\x}^* Q & \mathbf{I}\\
    Q^\top J_{\rvec\x}^* Q  & -\gamma_1 \I
\end{pmatrix}.
$$
Since $J_{\x \x}^* R^*=\mathbf{0}, V_c^\top J_{\rvec\x}^*=\mathbf{0}$, and
$$
\vvec_i^\top J_{\rvec \x}^* \vvec_j=\begin{cases}
    0, i\neq j,\\
\gamma_2 \lambda_i,i=j\leq k,\\
    -\gamma_2\lambda_i,i=j>k,
\end{cases}\; \text{ for } i,j=1,\cdots,n-m,  
$$
we have that $$Q^\top J_{\x\x}^* Q=\text{diag}(-\tau_1,\cdots,-\tau_m,\underbrace{0,\cdots,0}_{n-m}),$$ 
$$Q^\top J_{\rvec\x}^* Q=\text{diag}(\underbrace{0,\cdots,0}_{m},\gamma_2\lambda_1^*,\cdots,\gamma_2\lambda_k^*,-\gamma_2\lambda_{k+1}^*,\cdots,-\gamma_2\lambda_{n-m}^*).$$
After a series of permutations, $\tilde{D}_1$ is similar to a block upper triangular matrix whose diagonal blocks are 
\begin{equation}
\{-\tau_j\}_{j=1}^m , -\gamma_1\I_m, D_{2\times2}^1,\cdots,D_{2\times2}^{d-m}, D_{2\times2}^i=\begin{pmatrix}
    0 & 1 \\ -\gamma_2|\lambda_i^*| & -\gamma_1
\end{pmatrix}, 1\leq
i\leq d-m,
\label{eq: eigen D1}
\end{equation}
The eigenvalues of $D_{2\times2}^i$ are $\frac{-\gamma_1\pm \sqrt{\gamma_1^2-4\gamma_2|\lambda_i^*|
}}{2}$. If $\gamma_1^2-4\gamma_2|\lambda_i^*|\leq 0$, the real parts of the eigenvalues are negative; If $\gamma_1^2-4\gamma_2|\lambda_i^*|> 0$, the eigenvalues are negative real numbers because $\frac{-\gamma_1\pm\sqrt{\gamma_1^2-4\gamma_2|\lambda_i^*|}}{2}\leq \frac{-\gamma_1+\sqrt{\gamma_1^2-4\gamma_2|\lambda_i^*|}}{2}<\frac{-\gamma_1+\gamma_1}{2}=0$. In conclusion, the real parts of all eigenvalues of $J^*$ are negative, which indicates that $(\x^*,\rvec^*=0,\vvec_1^*,\cdots,\vvec_k^*)$ is a linearly stable steady state of \eqref{modified ACHiSD}.

``$\Longrightarrow$": Supposing that $(\x^*,\rvec^*,\vvec_1^*,\cdots,\vvec_k^*)$ is a linearly stable steady state, which means $\dot{\x}=\dot{\rvec}=\dot{\vvec}_i=0$, indicating 
\begin{equation}
\rvec^*=0, \left(\I-2\sum_{j=1}^i {\vvec}_j^*{{\vvec}_j^*}^\top\right)\tilde{\hessian}f(\x^*)[\vvec_i^*]=\mu_i^*\vvec_i^*, \mu_i^*=\langle \vvec_i^*,\tilde{\hessian}f(\x^*)[\vvec_i^*] \rangle
\label{eq: equilibrium}
\end{equation}
We now show by induction that for $i = 1, \cdots, k$,
\begin{equation}
\tilde{\hessian}f(\x^*)[\vvec_i^*]=\mu_i^*\vvec_i^*\neq0, \langle \vvec_j^*,\vvec_i^* \rangle=\delta_{ij}, j=1,\cdots,i-1.
\label{eq: eigen}
\end{equation}
The $i = 1$ case is obtained from \eqref{eq: equilibrium} directly. Assumed that \eqref{eq: eigen} holds for $1 \leq i < l$, by taking $i = l$ in \eqref{eq: equilibrium} we have,
$$
\left(\tilde{\hessian}f(\x^*)-\sum_{j=1}^{l-1}2\mu_j\vvec_j^*\vvec_j^*\right)\vvec_l^*=\mu_l^*\vvec_l^*.
$$
Since $\tilde{\hessian}f(\x^*)$ and $\tilde{\hessian}f(\x^*)-\sum_{j=1}^{l-1}2\mu_j\vvec_j^*\vvec_j^*$ share the same eigenvectors, $\vvec_l^*$ is also an eigenvector of $\tilde{\hessian}f(\x^*)$ with an eigenvalue $\mu_l^*$, and hence $$\left(\sum_{j=1}^{l-1}2\mu_j\vvec_j^*{\vvec_j^*}^\top\right)\vvec_l^*=0$$ leads to $\langle \vvec_l^*,\vvec_j^* \rangle=0$, which completes the induction. As a result of \eqref{eq: eigen}, $\I-2\sum_{i=1}^k \vvec_i^*\vvec_i^*$ is an orthogonal matrix (so that it is full rank), which, together with $\dot{\rvec}=0,\rvec^*=0$, indicates $\gradient f(\x^*) = 0$, i.e. $\x^*$ is a critical point. 

Now, we only need to show that $\x^*$ is an index-$k$ saddle point. Recall that $\{(\mu_i^*,\vvec_i^*)\}_1^{d-m}$ are eigenpairs of $\hessian f(\x^*)$. Similarly to \eqref{eq: diag of J star} and \eqref{eq: eigen D1}, the $2d$  eigenvalues of $D_1$ are $\{-\tau_i\}_{i=1}^m$, $-\gamma_1$ (with multiplicity $m$), and the residual $2d-2m$ eigenvalues are
\begin{equation}
\label{eigenvalue x and r}
\frac{-\gamma_1\pm \sqrt{\gamma_1^2+4\gamma_2\mu_1^*}}{2}, \cdots, \frac{-\gamma_1\pm \sqrt{\gamma_1^2-4\gamma_2\mu^*_{d-m}}}{2},
\end{equation}
which have negative real parts from the linear stability, indicating that $\mu_i^*<0,i\leq k$, $\mu_i^*>0,i> k$, that is, $\x^*$ is an index-k saddle point. Finally, it follows from \eqref{eq: eigen Jii} that the eigenvalues of $J_{ii}^*$ are
$$
\mu_i^*+\mu_1^*,\cdots,\mu_i^*+\mu_i^*, \mu_i^*-\mu_{i+1}^*, \cdots, \mu_i^*-\mu_{d-m}^*, \mu_i^* (\text{with multiplicity m}).
$$
To ensure that all eigenvalues are negative, we have $\mu_1^*<\mu_2^*<\cdots<\mu_k^*$, which means $\mu_i^*=\lambda_i^*$. This completes the proof.
\end{proof}

\begin{remark}
    In \cite[Theorem 1]{yin2020constrained}, the authors have analyzed the linear stability of the CSD and obtained a convergence rate of $\e^{-\mu t}$ with respect to $\x$. Choosing $\gamma_2=1$ and the critical damping parameter $\gamma_1=2\sqrt{\mu}$, \eqref{eigenvalue x and r} yields a convergence rate of $\e^{-\sqrt{\mu} t}$. Consequently, in directions corresponding to small eigenvalues ($\mu\ll 1$), the momentum term leads to a significant acceleration of convergence. The analysis presented later on the discrete dynamics further demonstrates the acceleration effects in the case of an ill-conditioned Hessian. 
\end{remark}

\begin{remark}
The simplicity of the negative eigenvalues, that is, $\lambda_1^*<\cdots<\lambda_k^*<0< \lambda_{k+1}^*\leq \cdots \leq \lambda_{d-m}^*$, is required to ensure the convergence of the unstable eigenvectors. Indeed, when the negative eigenvalues are not simple, the associated eigenvectors are not uniquely defined and may rotate within the corresponding eigenspace. Nevertheless, one can prove that local convergence still holds even when the negative eigenvalues have multiplicity, by adapting the arguments in Section \ref{sec: one step v update}. In this case, instead of guaranteeing convergence of the individual unstable eigenvectors, we should establish convergence of the associated Householder operator $R=\I-2\sum_{i=1}^k \vvec_i{\vvec_i}^\top$, which is invariant under orthogonal changes of basis within the unstable eigenspace, and therefore remains uniquely defined.
\end{remark}

\section{Discrete constrained saddle dynamics and momentum-based constrained saddle dynamics}\label{sec: CSD with exact eigenvector}
Although Section \ref{sec: MCSD} shows that the continuous-time CSD and MCSD exhibit local convergence with rates $e^{-\mu t}$ and $e^{-\sqrt{\mu} t}$, respectively, practical implementations necessarily rely on discrete-time schemes with finite step sizes. Hence, the convergence rate depends not only on the smallest eigenvalue $\mu$, but also on the largest eigenvalue $L$, since $L$ constrains the admissible step size required for numerical stability. Consequently, the discrete-time convergence rate becomes condition-number dependent. Motivated by this observation, we study the convergence of the discrete CSD and MCSD with the exact unstable eigenvectors in this section. 

\subsection{Convergence analysis of the discrete constrained saddle dynamics}\label{subsec: convergence CSD}
When analyzing the convergence of the discrete saddle dynamics (SD), the saddle dynamics are often implemented under the framework of the following algorithm, with the exact eigenvector assumption \cite{luo2022sinum,su2025improved,luo2025accelerated}
\begin{equation}\label{eq: saddle algorithm}
\begin{aligned}
    &\x_{n+1}=\mathrm{Ret}_{\x_n}\left(-\Delta t\left(\I-2\sum_{i=1}^k\vvec^i_n{\vvec^i_n}^\top\right)\gradient  f(\x_n)\right),\\
    & \{\vvec^i_{n+1}\}_{i=1}^k=\text{EigenSolver}(\{\vvec^i_n\}_{i=1}^k,\hessian f(\x_{n+1})), 
\end{aligned}
\end{equation}
where $\Delta t$ is the step size, $\mathrm{Ret}_\x(\vvec)$ denotes a retraction, i.e., a mapping $\mathrm{Ret}_\x: T_\x\M\rightarrow \M$ that sends a tangent vector to a point on the manifold and satisfies $$\mathrm{Ret}_\x(\mathbf{0})=\x, \left.\frac{d}{dt}\right|_{t=0}\mathrm{Ret}_\x(t\eta)=\eta,\forall \eta\in T_{\x}\M.$$ 

\begin{remark}
    In \eqref{eq: saddle algorithm}, instead of updating the unstable directions $\vvec_i$ via an explicit Euler discretization of the $\vvec$-dynamics in \eqref{eq: SD}, the algorithm computes the unstable eigenspace of $\hessian f(\x_{n+1})$ exactly at each iteration. While this assumption simplifies the convergence analysis, it requires solving an eigenvalue problem at every step, which can be computationally expensive. In this section, we adopt the exact-eigenvector assumption for brevity. Nevertheless, in Section \ref{sec: one step v update}, we will show that the local convergence results still hold with the Euler discretization of $\vvec$-dynamics.
\end{remark}
We focus on the convergence to a strict index-$k$ saddle point $\x^*$, i.e., $\hessian f(\x^*)$ has no zero eigenvalues, which is specified in the following assumption.

\begin{assumption}[Strict saddle]\label{assumption: Hessian}
There exist $\delta>0, L>\mu>0$ such that if $\x\in U_\delta=\{\x\in \M,d(\x,\x^*)< \delta\}$, where $d(\x,\x^*)$ is the geodesic distance, the eigenvalues of $\hessian f(\x)$ satisfy
\begin{equation}
    -L\leq\lambda_1\leq \cdots\leq\lambda_k\leq -\mu < 0< \mu\leq\lambda_{k+1} \leq \cdots \leq \lambda_{d-m} \leq L.
    \label{eq:eigenvalues}
\end{equation}
\end{assumption}

\begin{remark}
    Assumption \ref{assumption: Hessian} holds for every strict index-$k$ saddle point $\x^*$. $L$ and $\mu$ can be regarded as upper and lower bounds of the absolute eigenvalues of $\hessian f(\x), \x \in U_\delta$. Thus, $\kappa=L/\mu$ represents the condition number of $\hessian f(\x),\x\in U_\delta.$
\end{remark} 

When analyzing local convergence, it is convenient to use local Taylor expansions. Since $\{\vvec^i(\x)\}_{i=1}^{d-m}$ is an eigenframe of $\hessian f(\x)$ and thus depends on $\x$, it is necessary to first characterize the smoothness of the Householder operator $R(\x)=\I-2\sum_{i=1}^k\vvec^i(\x){\vvec^i(\x)}^\top$ as a function of $\x$, which is specified in the following lemma.

\begin{lemma}[The smoothness of the reflection operator]\label{lem:Riesz-proj-smooth}
Suppose Assumption \ref{assumption: Hessian} holds. Let $\Gamma\subset \mathbb{C}$ be a positively oriented simple closed $C^1$-curve enclosing the real interval $[-L,-\mu]$ but does not enclose $[\mu,L]$. For $\x\in U_\delta$, define
\begin{equation}\label{eq:Riesz-Px-def}
  R(\x)
  := \I+\frac{1}{\pi i}\oint_\Gamma \big(\hessian f(\x)-z\I\big)^{-1}\,\mathrm dz
  :T_x\M\to T_x\M.
\end{equation}
Then 
\begin{equation}\label{eq: projection operator}
R(\x)=\I-2\sum_{i=1}^k\vvec^i(\x){\vvec^i(\x)}^\top,
\end{equation}
where $\{\vvec^1(\x),\dots,\vvec^k(\x)\}$ is the local orthonormal frame of the negative eigenspace of $\hessian f(\x)$. Consequently, the Householder operator $R(\x)$ is smooth on $U_\delta$. 
\end{lemma}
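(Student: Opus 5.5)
Because $\hessian f(\x)$ is self-adjoint on $T_\x\M$, we can diagonalize it and evaluate the contour integral directly with the residue theorem; the smoothness statement then follows by rewriting the integral in a fixed coordinate chart.

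\textbf{Step 1: the spectral formula.} Fix $\x\in U_\delta$ and take an orthonormal eigenbasis $\{\vvec^j(\x)\}_{j=1}^{d-m}$ of $T_\x\M$ with eigenvalues $\lambda_j(\x)$. By Assumption~\ref{assumption: Hessian} no $\lambda_j(\x)$ lies on $\Gamma$, so the resolvent is defined on $\Gamma$ and equals
\[
\big(\hessian f(\x)-z\I\big)^{-1}=\sum_{j=1}^{d-m}\frac{1}{\lambda_j(\x)-z}\,\vvec^j(\x){\vvec^j(\x)}^\top .
\]
Integrating term by term over the positively oriented curve gives
\[
\oint_\Gamma\frac{dz}{\lambda_j-z}=-2\pi i \quad\text{if } \lambda_j \text{ is enclosed by } \Gamma,
\]
and $0$ otherwise. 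By \eqref{eq:eigenvalues}, the enclosed eigenvalues are exactly $\lambda_1,\dots,\lambda_k\in[-L,-\mu]$. Hence
\[
\frac{1}{\pi i}\oint_\Gamma\big(\hessian f(\x)-z\I\big)^{-1}dz=-2\sum_{i=1}^k\vvec^i(\x){\vvec^i(\x)}^\top ,
\]
which is \eqref{eq: projection operator}. Here $\I$ is the identity on $T_\x\M$, i.e.\ $\Pvec(\x)$ in ambient form. The sum is the orthogonal projector onto the negative eigenspace, so it does not depend on which orthonormal basis of that eigenspace is chosen, even when eigenvalues are repeated. This shows $R(\x)$ is well defined.

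\textbf{Step 2: smoothness.} The difficulty is that $\hessian f(\x)$ acts on the moving space $T_\x\M$, so one cannot differentiate in $\x$ naively. I would pass to the ambient extension
\[
B(\x)=\tilde{\hessian}f(\x)=\hessian f(\x)\,\Pvec(\x)\in\mathbb{R}^{d\times d}.
\]
This matrix is smooth in $\x$, because $\Pvec$, $\A$, $(\A^\top\A)^{-1}$, $\nabla f$ and $\nabla^2 f$ are smooth under LICQ. Its spectrum is $\{\lambda_j(\x)\}$ together with $0$ of multiplicity $m$ on $N_\x\M$, and $0$ lies outside $\Gamma$ by the strict saddle gap. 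Repeating the Step~1 computation on $\mathbb{R}^d$ gives
\[
\Pvec(\x)+\frac{1}{\pi i}\oint_\Gamma(B(\x)-z\I)^{-1}dz=\Pvec(\x)-2\sum_{i=1}^k\vvec^i{\vvec^i}^\top ,
\]
which is the ambient representation of $R(\x)$.

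To see that this expression is smooth, note that $z\mapsto(B(\x)-z\I)^{-1}$ is smooth in $(\x,z)$ on $U_\delta\times\Gamma$, since inversion is smooth on invertible matrices. Because $\Gamma$ is compact and stays a positive distance from $\mathrm{spec}(B(\x))$, the resolvent and all its $\x$-derivatives are bounded uniformly on $\Gamma$ (shrinking $U_\delta$ slightly if needed). Differentiation under the integral sign is then justified, so $R(\x)$ is $C^\infty$ on $U_\delta$.

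\textbf{Main obstacle.} The key point is that $\Gamma$ is fixed independently of $\x$ and stays uniformly away from the spectrum for all $\x\in U_\delta$. Assumption~\ref{assumption: Hessian} guarantees this only if $\Gamma$ is chosen to meet the real axis outside $[-L,-\mu]\cup[\mu,L]$ and also away from $0$. I would make that choice explicit, for example taking $\Gamma$ to be the circle centred at $-(L+\mu)/2$ with radius $(L-\mu)/2+\mu/2$.
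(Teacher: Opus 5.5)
Your proof is correct. Step~1 is the same residue computation the paper performs. Step~2 handles the moving tangent space by a different route.

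The paper fixes $\bar\x$ and picks a smooth local orthonormal frame $\{e_j(\x)\}$ of $T_\x\M$ on a neighbourhood $U_0$. It represents $\hessian f(\x)$ by a smooth symmetric $(d-m)\times(d-m)$ matrix $H(\x)$ and gets smoothness of $(H(\x)-z\I)^{-1}$ from the adjugate formula before integrating over the compact curve $\Gamma$.

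You instead work with the ambient matrix $B(\x)=\tilde{\hessian}f(\x)$ on $\mathbb{R}^d$. This has two advantages. It is defined and smooth on all of $U_\delta$ without choosing a frame. It also yields $R(\x)$ directly in the $d\times d$ form in which the paper uses it later, for instance acting on $\gradient f(\x)$ and in the Taylor expansions.

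The cost is the spurious eigenvalue $0$ on $N_\x\M$, which forces your extra condition on $\Gamma$. The paper's intrinsic matrix $H(\x)$ has spectrum only in $[-L,-\mu]\cup[\mu,L]$. So its argument works for every $\Gamma$ allowed in the statement. The uniform distance from $\Gamma$ to the spectrum is then automatic by compactness, once $[-L,-\mu]$ lies inside $\Gamma$ and $[\mu,L]$ outside.

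Your restriction on $\Gamma$ is harmless, but you should state why. Step~1 shows $R(\x)=\I-2\sum_{i\le k}\vvec^i{\vvec^i}^\top$ for every admissible $\Gamma$, so $R(\x)$ does not depend on $\Gamma$. Proving smoothness with your particular circle therefore suffices. Even this is not essential: if $\Gamma$ enclosed $0$, the ambient integral would only acquire the extra term $-2(\I-\Pvec(\x))$, which is smooth.

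Two small corrections. The sentence ``$0$ lies outside $\Gamma$ by the strict saddle gap'' is inaccurate; the gap only makes such a choice of $\Gamma$ possible. Shrinking $U_\delta$ is unnecessary, since smoothness is a local property.
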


\begin{proof}
Fix $\bar{\x}\in U_\delta$, choose an open neighbourhood $U_0\subset U_\delta$ of $\bar{\x}$ and a smooth orthonormal frame 
$\{e_1(\x),\dots,e_{d-m}(\x)\}$ of $T_{\x}\M$ on $U_0$.
With respect to this frame, the Riemannian Hessian is represented by a symmetric real $(d-m)\times (d-m)$ matrix
\[
  H(\x) = \big\{h_{ij}(\x)\big\}, \hessian f(\x)[e_j(\x)] = \sum_{i=1}^{d-m} h_{ij}(\x)\,e_i(\x).
\]
Since $f$ is smooth on $\M$, each $h_{ij}(\x)$
is also smooth on $\x\in U_0$.
In this frame, the Householder operator $R(\x)$ defined in \eqref{eq:Riesz-Px-def} is represented by
\[
  R_{mat}(\x) = \I_{d-m}+\frac{1}{\pi i}\oint_\Gamma \big(H(\x)-z\I_{d-m}\big)^{-1}\,\mathrm dz.
\]
Fix $\x\in U_0$, since $H(\x)$ is a real symmetric matrix, there exists an orthonormal basis 
$\{\mathbf{u}_1(\x),\dots,\mathbf{u}_{d-m}(\x)\}$ consisting of eigenvectors of $H(\x)$, with corresponding eigenvalues of $\hessian f(\x)$,
$\lambda_1(\x),\dots,\lambda_{d-m}(\x)$. In this eigenbasis,
\[
  H(\x) = \sum_{i=1}^{d-m} \lambda_i(\x)\,\mathbf{u}_i(\x) \mathbf{u}_i(\x)^\top,\]
  and
  \[
  (H(\x)-z\I_{d-m})^{-1}
  = \sum_{i=1}^{d-m} \frac{1}{\lambda_i(\x)-z}\,\mathbf{u}_i(\x) \mathbf{u}_i(\x)^\top.
\]
Note that if $z\in\Gamma\subset \mathbb{C}$, from the definition of $\Gamma$, we have $z\notin\sigma(H(\x))$. Then,
$$
  R_{mat}(\x)=\I_{d-m}+ \sum_{j=1}^{d-m}
     \Bigg(\frac{1}{\pi i}\oint_\Gamma \frac{1}{\lambda_j(\x)-z}\,\mathrm dz\Bigg) \mathbf{u}_j(\x) \mathbf{u}_j(\x)^\top.
$$
From the residue theorem, if $\lambda_j(\x)$ lies inside $\Gamma$, then $\frac{1}{2\pi i}\oint_\Gamma \frac{1}{\lambda_j(\x)}-z\,\mathrm dz = -1,$
while the integral vanishes if $\lambda_j(\x)$  lies outside $\Gamma$. Since $\Gamma$ encloses precisely the negative eigenvalues of $H(\x)$ and none of the nonnegative ones. Hence
\[
  R_{mat}(\x)
  =\I_{d-m} -2\sum_{i=1}^{k} \mathbf{u}_i(\x) \mathbf{u}_i(\x)^\top.
\]
The right-hand side is exactly the Householder matrix onto the direct sum of the negative eigenspaces of $H(\x)$, which proves \eqref{eq: projection operator}. 

To prove that $R(\x)$ is smooth in $U_\delta$, we only need to prove that $R_{mat}(\x)$ is smooth in $U_0$. Fix $z\in\Gamma$ and define $B(\x):=H(\x)-z\I_{d-m}$. By the spectral gap between the positive and negative eigenvalues and the choice of $\Gamma$, we have that $B(\x)$ is invertible for all $x\in U_0$. Then $B(\x)^{-1} = \frac{\operatorname{adj}(B(\x))}{\det (B(\x))}$, which means that $B(\x)^{-1}$ is smooth on $U_0$. Since $\Gamma$ is compact,
$$
R_{mat}(\x)
  = \I_{d-m}+\frac{1}{\pi i}\oint_\Gamma \big(H(\x)-z\I_{d-m}\big)^{-1} \mathrm{d}z
$$
is smooth, which completes the proof.
\end{proof}

By definition, a retraction is a local diffeomorphism from the tangent space to the manifold in a neighborhood of the zero vector \cite{boumal2023introduction}, as stated in the following lemma.
\begin{lemma}[Local diffeomorphism property of the retraction]\label{local diffeomorphism}
Let $\mathrm{Ret}_\x: T_\x\M\to M$ be the retraction map. There exist $r>0$ and $\delta_1>0$ such that on
\[
  B_r(0_{\x^*})\!:=\!\{\rvec\in T_{\x^*}\M,\|\rvec\|_{\x^*}<r\},\] 
$\mathrm{Ret}_{\x^*}$$:B_r(0_{\x^*})\to \mathrm{Ret}_{\x^*}(B_r(0_{\x^*}))$ is a diffeomorphism, and $U_{\delta_1}$$\subset\mathrm{Ret}_{\x^*}(B_r(0_{\x^*}))\subset U_\delta$. Moreover, there exist positive constants $C_r>0$ and $l>0$ such that
for all $\x\in U_{\delta_1}$ and all $\vvec\in T_\x\M$ with $\|\vvec\|_\x\leq l$, we have
\[
d(\x,\mathrm{Ret}_{\x}(\vvec)) \leq C_r \|\vvec\|_\x.
\]
\end{lemma}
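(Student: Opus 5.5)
The plan is to derive both claims from the inverse function theorem applied to $\mathrm{Ret}$, together with its smoothness on the tangent bundle $T\M$. Throughout, geodesic distance is compared with tangent-space norms using normal coordinates near $\x^*$.

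\emph{First part.} Consider $\mathrm{Ret}_{\x^*}:T_{\x^*}\M\to\M$. It is smooth, and the defining property $\frac{d}{dt}|_{t=0}\mathrm{Ret}_{\x^*}(t\eta)=\eta$ says that its differential at $0_{\x^*}$ is the identity on $T_{\x^*}\M$. The inverse function theorem then gives $r_0>0$ such that $\mathrm{Ret}_{\x^*}$ restricted to $B_{r_0}(0_{\x^*})$ is a diffeomorphism onto an open set $W\ni\x^*$.

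\begin{itemize}
\item By continuity, $\mathrm{Ret}_{\x^*}(0)=\x^*$, so shrinking to some $r\le r_0$ gives $\mathrm{Ret}_{\x^*}(B_r(0_{\x^*}))\subset U_\delta$.
\item The image $\mathrm{Ret}_{\x^*}(B_r(0_{\x^*}))$ is open in $\M$ (it is the image of an open set under a diffeomorphism) and contains $\x^*$.
\item Geodesic balls $U_{\delta_1}$ form a neighborhood basis of $\x^*$, so some $\delta_1>0$ satisfies $U_{\delta_1}\subset\mathrm{Ret}_{\x^*}(B_r(0_{\x^*}))$.
\end{itemize}

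\emph{Second part.} Here we need the bound $d(\x,\mathrm{Ret}_\x(\vvec))\le C_r\|\vvec\|_\x$, uniformly in the base point $\x$.

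\begin{itemize}
\item Let $K=\{(\x,\vvec)\in T\M:\x\in\overline{U_{\delta_1}},\ \|\vvec\|_\x\le l\}$. Since $\overline{U_{\delta_1}}$ is compact (after shrinking $\delta_1$ if needed), $K$ is compact.
\item Choose $l$ small enough that $\mathrm{Ret}(K)$ lies in a geodesically convex (normal) neighborhood of $\x^*$. Then the geodesic distance there is comparable to the Euclidean distance in $\mathbb{R}^d$: there is $c>0$ with $d(\mathbf{y},\mathbf{z})\le c\|\mathbf{y}-\mathbf{z}\|$, because $\M$ is an embedded smooth submanifold.
\item Fix $(\x,\vvec)\in K$ and set $\gamma(t)=\mathrm{Ret}_\x(t\vvec)$. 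Then
\[
\mathrm{Ret}_\x(\vvec)-\x=\int_0^1 D\mathrm{Ret}_\x(t\vvec)[\vvec]\,dt .
\]
\item Let $M=\sup_K\|D\mathrm{Ret}_\x(\cdot)\|$, which is finite by smoothness and compactness. This gives $\|\mathrm{Ret}_\x(\vvec)-\x\|\le M\|\vvec\|$.
\item Combining the two estimates yields the claim with $C_r=cM$.
\end{itemize}

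A more direct variant avoids the Euclidean comparison. The curve $\gamma$ has Riemannian length $\int_0^1\|\dot\gamma(t)\|\,dt\le M\|\vvec\|$, and geodesic distance is bounded by curve length, so $d(\x,\mathrm{Ret}_\x(\vvec))\le M\|\vvec\|$ immediately.

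\emph{Main obstacle.} The delicate point is the uniformity in the base point $\x$. This requires treating $\mathrm{Ret}$ as a smooth map on (a neighborhood of the zero section of) $T\M$ rather than one $\x$ at a time, which is the standard definition in the cited reference. It also requires ensuring that $t\vvec$ stays in the domain of $\mathrm{Ret}$ for $\|\vvec\|_\x\le l$. Compactness of $K$ then settles both issues.
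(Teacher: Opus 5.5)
Your proposal is correct, and it supplies an argument the paper omits. The paper states this lemma without proof, justifying it only by the remark that a retraction is ``by definition'' a local diffeomorphism near the zero vector, with a citation to Boumal. Your first part is the standard reasoning behind that remark: the defining property gives $D\mathrm{Ret}_{\x^*}(0)=\I$, then the inverse function theorem, then continuity to land inside $U_\delta$, and finally the fact that geodesic balls form a neighborhood basis to obtain $\delta_1$.

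The second claim, however, does not follow from the local diffeomorphism property at the single point $\x^*$. It needs exactly what you use: joint smoothness of $\mathrm{Ret}$ on $T\M$ together with compactness of the tube $K$ over $\overline{U_{\delta_1}}$. Here your proof is more complete than the paper's justification.

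Of your two versions, keep the curve-length one. The bound $d(\x,\mathrm{Ret}_\x(\vvec))\le\int_0^1\|D\mathrm{Ret}_\x(t\vvec)[\vvec]\|\,dt\le M\|\vvec\|_\x$ is complete as written. The Euclidean-comparison route has a small wrinkle: shrinking $l$ alone cannot push $\mathrm{Ret}(K)$ into a convex neighborhood of $\x^*$. This is because $\mathrm{Ret}(K)$ already contains $\overline{U_{\delta_1}}$ (from the zero vectors), so $\delta_1$ would have to shrink as well.

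Finally, no shrinking is needed for compactness. $\M=\Cvec^{-1}(\mathbf{0})$ is closed in $\mathbb{R}^d$ and $d(\x,\x^*)\ge\|\x-\x^*\|$, so $\overline{U_{\delta_1}}$ is automatically closed and bounded.
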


Lemma \ref{local diffeomorphism} tells us that the iterates can be defined using local coordinates on the tangent space. Lemma \ref{lem:Riesz-proj-smooth} allows us to carry out a local Taylor expansion. Therefore, by collecting the first-order information of the update in these local coordinates, we can establish local convergence, as stated in the following proposition.

\begin{proposition}[Local convergence with exact eigenvectors]\label{convergence analysis normal case}
    Suppose Assumption \ref{assumption: Hessian} holds.  There exists a neighborhood of $\x^*$ such that, if the initial condition $\x(0)$ lies in this neighborhood and $\Delta t\leq 1/L$, then the iteration point $\x(n)$ converges to $\x^*$ with a linear convergence rate that depends on the condition number $\kappa=L/\mu$. 
\end{proposition}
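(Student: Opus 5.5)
We work in the local coordinates supplied by Lemma \ref{local diffeomorphism}. Write each iterate as $\x_n=\mathrm{Ret}_{\x^*}(\boldsymbol{\xi}_n)$ with $\boldsymbol{\xi}_n\in B_r(0_{\x^*})\subset T_{\x^*}\M$. With exact eigenvectors, one step of \eqref{eq: saddle algorithm} becomes the map
\[
\Phi(\x)=\mathrm{Ret}_{\x}\bigl(-\Delta t\,R(\x)\,\gradient f(\x)\bigr),\qquad R(\x)=\I-2\sum_{i=1}^k\vvec^i(\x){\vvec^i(\x)}^\top .
\]
By Lemma \ref{lem:Riesz-proj-smooth}, $R(\x)$ is smooth on $U_\delta$, and $\gradient f$ and the retraction are smooth. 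Hence the coordinate map $\boldsymbol{\xi}\mapsto \mathrm{Ret}_{\x^*}^{-1}(\Phi(\mathrm{Ret}_{\x^*}(\boldsymbol{\xi})))$ is a smooth self-map of a neighborhood of $0$ in $T_{\x^*}\M$, and it fixes $0$ because $\gradient f(\x^*)=0$. The plan is to compute its differential at $0$, bound its spectral norm, and then absorb the second-order Taylor remainder by shrinking the neighborhood.

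For the linearization, the vanishing of $\gradient f(\x^*)$ means that every term in which a derivative falls on $R(\x)$ or on the base point of the retraction carries a factor $\gradient f(\x^*)=0$ and drops out at first order. The terms that remain are the following.
\begin{itemize}
\item $\frac{d}{dt}\big|_{t=0}\mathrm{Ret}_\x(t\eta)=\eta$ and $D\mathrm{Ret}_{\x^*}(0)=\I$.
\item The differential of $\gradient f$ at the critical point, which in tangent coordinates is $\hessian f(\x^*)$. Here we use that at a critical point the Riemannian Hessian coincides with the ordinary derivative of the gradient field, so it is independent of the connection and the chart.
\end{itemize}
The Jacobian of the coordinate map at $0$ is therefore
\[
T=\I-\Delta t\,R(\x^*)\,\hessian f(\x^*).
\]
The operator $R(\x^*)\hessian f(\x^*)$ shares the eigenvectors $\vvec_i^*$ of $\hessian f(\x^*)$, with eigenvalues $|\lambda_i^*|\in[\mu,L]$ by Assumption \ref{assumption: Hessian}. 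It follows that $T$ is self-adjoint with eigenvalues $1-\Delta t|\lambda_i^*|$. For $\Delta t\le 1/L$ these lie in $[0,1-\Delta t\mu]$, so $\|T\|\le 1-\Delta t\mu$, which equals $1-1/\kappa$ when $\Delta t=1/L$.

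For the nonlinear step, Taylor's theorem with the smoothness above gives
\[
\boldsymbol{\xi}_{n+1}=T\boldsymbol{\xi}_n+\mathcal{O}(\|\boldsymbol{\xi}_n\|^2),
\]
with a uniform constant $C$ on a ball $B_{r'}(0)$. Choose $\rho<\min(r',\,\Delta t\mu/(2C))$. Then $\|\boldsymbol{\xi}_{n+1}\|\le(1-\tfrac12\Delta t\mu)\|\boldsymbol{\xi}_n\|$, so the ball $B_\rho$ is invariant and, by induction, $\boldsymbol{\xi}_n\to0$ linearly. The bounds from Lemma \ref{local diffeomorphism} transfer this rate back to the geodesic distance $d(\x_n,\x^*)$: the coordinates and the geodesic distance are equivalent up to constants, using $C_r$ and the fact that $\mathrm{Ret}_{\x^*}$ is a diffeomorphism. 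Taking $\Delta t=1/L$ gives the rate $1-\frac{1}{2\kappa}$, which depends on $\kappa$. The required neighborhood is the image of $B_\rho$, further shrunk so that it lies inside $U_{\delta_1}$ and so that the step length $\Delta t\|\gradient f\|\le l$.

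The main obstacle is justifying the linearization cleanly. We must check that the derivatives of $R(\x)$ and the $\x$-dependence of $\mathrm{Ret}_\x$ contribute nothing at first order. We must also identify the differential of $\x\mapsto\gradient f(\x)$, read through the chart $\mathrm{Ret}_{\x^*}$, with $\hessian f(\x^*)$ exactly; this holds only because $\gradient f(\x^*)=0$, which kills the curvature and retraction second-order terms. I would handle this by writing $\gradient f(\x)=\hessian f(\x^*)[\boldsymbol{\xi}]+\mathcal{O}(\|\boldsymbol{\xi}\|^2)$ after parallel-identifying tangent spaces via $\Pvec(\x^*)$, and then bounding every remaining term by $C\|\boldsymbol{\xi}\|\,\|\gradient f(\x)\|=\mathcal{O}(\|\boldsymbol{\xi}\|^2)$.
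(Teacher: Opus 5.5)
Your proposal is correct and follows essentially the same route as the paper: pull the iteration back to $T_{\x^*}\M$ through $\mathrm{Ret}_{\x^*}$, compute the differential $\I-\Delta t\,R^*H^*$ at the fixed point (derivatives of $R(\x)$ and of the retraction's base point drop out because $\gradient f(\x^*)=0$), bound its norm by $1-\Delta t\mu$ for $\Delta t\le 1/L$, and absorb the quadratic Taylor remainder on a ball of radius at most $\Delta t\mu/(2C)$ to get the rate $1-\Delta t\mu/2$. Your justification that the ambient derivative of $\gradient f$ at a critical point is tangent and equals $\hessian f(\x^*)$ fills in a step the paper states without comment.
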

\begin{proof}
    From Lemma \ref{local diffeomorphism}, we can choose a radius $r >0$ such that $\mathrm{Ret}_{\x^*}$
is a diffeomorphism from $B_r(0_{\x^*})$ to $ \mathrm{Ret}_{\x^*}(B_r(0_{\x^*}))\subset U_\delta$. From Assumption \ref{assumption: Hessian}, if $\x\in  U_\delta$, $$\|\gradient f(\x)-P_{\x^*\rightarrow\x}\grad f(\x^*)\|=\|\gradient f(\x)\|\leq Ld(\x,\x^*),$$ where $P_{\x^*\rightarrow\x}$ denotes the parallel translation from $\x^*$ to $\x$. Then, if $\x_n\in  U_{r_1}$ and $r_1<\min(\delta_1,\frac{l}{L\Delta t},\frac{\delta_1}{C_r\Delta t L+1})$, we have that
$$
\begin{aligned}
&d(\x_{n+1},\x_n)=d\left(\mathrm{Ret}_{\x_n}\left(-\Delta t\left(\I-2\sum_{i=1}^k \vvec^i_n{\vvec_n^i}^\top\right)\gradient f(\x_n)\right),\x_n\right)\\
&\leq C_r\Delta t\left\|\left(\I-2\sum_{i=1}^k \vvec^i_n{\vvec_n^i}^\top\right)\gradient f(\x_n)\right\|\leq C_r\Delta t L d(\x_n,\x^*)\leq C_r\Delta t L r_1,
\end{aligned}
$$
Then,
$$
d(\x_{n+1},\x^*)\leq d(\x_{n+1},\x_n)+d(\x^*,\x_n)\leq C_r\Delta t L r_1+r_1 \leq \delta_1.
$$
From Lemma \ref{local diffeomorphism}, $\x_{n+1}\in U_{\delta_1} \subset \mathrm{Ret}_{\x^*}(B_r (0_{\x^*}))$. The vectors $\zeta_{n}, \zeta_{n+1}\in T_{\x^*}\M$ are thus well defined by
\begin{equation}
\label{eq: zeta defn}
\zeta_n=\mathrm{Ret}_{\x^*}^{-1}(\x_n), \; \zeta_{n+1}=\mathrm{Ret}_{\x^*}^{-1}(\x_{n+1}),
\end{equation}
which can be regarded as local coordinates of $\x_n,\x_{n+1}$ in 
$T_{\x^*}\M$. Consider the following map restricted to $B_{\delta_1}(0_{x^*})\subset
T_{\x^*}\M:$
$$
F=\mathrm{Ret}_{\x^*}^{-1}\circ G \circ \mathrm{Ret}_{\x^*}: T_{\x^*}\M \rightarrow T_{\x^*}\M,
$$
with $$G(\x)=\mathrm{Ret}_{\x}\left(-\Delta t\left(\I-2\sum_{i=1}^k \vvec^i(\x){\vvec^i(\x)}^\top\right)\gradient f(\x)\right),$$
which translates the original iteration from $\x_n$
to $\x_{n+1}$ into the iteration
$
\zeta_{n+1}=F(\zeta_n).
$
From Lemma \ref{lem:Riesz-proj-smooth}, we know that $G$ is smooth with respect to $\x$ and $F$ is smooth with respect to $\zeta$, we can use a standard
Taylor expansion on $T_{\x^*}\M$ to claim that
\begin{equation}\label{eq: iteration in the tangential space}
F(\zeta_n)= D_\zeta F(0)[\zeta_n]+E(\zeta_n),
\end{equation}
where $D_\zeta F(0)[\zeta_n]=\lim_{t\rightarrow0}\frac{F(t\zeta_n)-F(0)}{t}$ denotes the Fr\'echet derivative and \(\|E(\zeta_n)\|\leq C_{1,r}\|\zeta_n\|^2 \). Define 
$$\Psi(\x,\eta)=\mathrm{Ret}_{\x}(\eta),$$
we have $$G(\x)=\Psi(\x,\eta(\x)), \;
\eta(\x)=-\Delta t\left(\I-2\sum_{i=1}^k \vvec^i(\x){\vvec^i(\x)}^\top\right)\gradient f(\x).$$
and 
\begin{equation}\label{eq: first order term}
\begin{aligned}
&D_\zeta F(0)=D_{\mathbf{y}}\mathrm{Ret}_{\x^*}^{-1}(\mathbf{y})|_{\mathbf{y}=\x^*}\circ D_\x G(\x)|_{\x=\x^*}\circ D_\zeta\mathrm{Ret}_{\x^*}(\zeta)|_{\zeta=0}
\\
&=\I \circ D_\x\Psi(\x,\eta(\x))|_{\x=\x^*} \circ \I\\
&=D_\x \Psi(\x,0)|_{\x=\x^*}+D_\eta\Psi(\x^*,\eta)|_{\eta=0}[D_\x\eta(\x)|_{\x=\x^*}]\\
&=\I-\left.\Delta t D_\x\left(\I-2\sum_{i=1}^k \vvec^i(\x){\vvec^i(\x)}^\top\right)\right|_{\x=\x^*}\underbrace{\grad(\x^*)}_{=0}\\
&\quad-\Delta t \left(\I-2\sum_{i=1}^k \vvec^i(\x^*){\vvec^i(\x^*)}^\top\right)D_\x \grad(\x)|_{\x=\x^*}\\
&=\I-\Delta t \left(\I-2\sum_{i=1}^k \vvec^i(\x^*){\vvec^i(\x^*)}^\top\right)\hessian f(\x^*)=\I-\Delta t R^* H^*,
\end{aligned}
\end{equation}
where $R^*=\I-2\sum_{i=1}^k \vvec^i(\x^*){\vvec^i(\x^*)}^\top$, and $H^*=\hessian f(\x^*)$. By Assumption \ref{assumption: Hessian}, if $\Delta t\leq 1/L$, then $\|\I-\Delta t R^* H^*\|\leq 1-\Delta t \mu<1$. Putting this together with $\zeta_{n+1}=F(\zeta_n)$, \eqref{eq: iteration in the tangential space}, and \eqref{eq: first order term}, we have that 
\begin{equation}\label{eq: decay}
\begin{aligned}
\|\zeta_{n+1}\|&\leq  \left\|\I-\Delta t \left(\I-2\sum_{i=1}^k \vvec^i(\x^*){\vvec^i(\x^*)}^\top\right)\hessian f(\x^*)\right\|\|\zeta_n\|+C_{1,r}\|\zeta_n\|^2\\
&\leq (1-\Delta t \mu)\|\zeta_{n}\|+C_{1,r}\|\zeta_n\|^2.
\end{aligned}
\end{equation}
Note that \eqref{eq: decay} holds for every $\zeta_n\in B_{r_2}(0_{\x^*})$, where $r_2>0$ is a fixed constant that satisfies $\mathrm{Ret}_{\x^*}(B_{r_2}(0_{\x^*}))\subset U_{r_1}$. We further choose $r_2^*\leq \min(r_2,\frac{\Delta t\mu}{2C_{1,r}})$. Then, if $\x_0\in \mathrm{Ret}_{\x^*}(B_{r_2^*}(0_{\x^*}))$, equivalently \(\|\zeta_0\|\leq r_2^*\leq\frac{\Delta t \mu}{2C_{1,r}}\), it follows from \eqref{eq: decay} that $\|\zeta_1\|\leq (1-\frac{\Delta t \mu}{2})\|\zeta_0\|$. Consequently, $\x_1\in \mathrm{Ret}_{\x^*}(B_{r_2^*}(0_{\x^*}))$. By induction, $\|\zeta_{n+1}\|\leq (1-\frac{\Delta t \mu}{2})\|\zeta_n\|$ holds for all subsequent iterates, which leads to a linear convergence with the rate $1-\frac{\Delta t \mu}{2}\in ( 1-\frac{\mu}{2L},1)$ that depends on the condition number $\kappa=L/\mu$.
\end{proof}
In practical implementations, we usually do not expect to have exact eigenvectors due to the finite tolerance of the iteration methods in EigenSolver and the round-off error. Thus, in the Proposition \ref{convergence with inexact} presented below, we assume that $\x(n)$ is updated by  
$$   \x_{n+1}=\mathrm{Ret}_{\x_n}\left(-\Delta t\left(\I-2\hat{P}_n
\right)\gradient  f(\x_n)\right) 
$$
with a small error $\theta$ with respect to the projection onto the eigenspace, that is, 
$$\left\Vert
\hat{P}_n-\sum_{i=1}^k\vvec^i_n\vvec^i_n \text{} ^\top  \right\Vert\leq \theta \ll 1.$$
\begin{proposition}[Local convergence with inexact unstable directions]\label{convergence with inexact}
    Suppose Assumption \ref{assumption: Hessian} holds and that $\theta < \frac{\mu}{2L}$. There exists a neighborhood of $\x^*$ such that, if the initial point lies in this 
neighborhood and $\Delta t\leq 1/L$, the iterates $\x(n)$ converge to $\x^*$ with a linear convergence rate, which depends on the condition number $\kappa=L/\mu$ and on the error $\theta$ of the unstable directions.
\end{proposition}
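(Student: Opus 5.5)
The plan is to reuse the local-coordinate argument from the proof of Proposition \ref{convergence analysis normal case} and treat the inexact projector as a bounded perturbation of the exact reflection. Write $\hat R_n=\I-2\hat{P}_n$ and $R_n=R(\x_n)=\I-2\sum_{i=1}^k\vvec^i_n{\vvec^i_n}^\top$. By Lemma \ref{lem:Riesz-proj-smooth}, $R_n$ is the smooth reflection operator, and $\|\hat R_n-R_n\|\leq 2\theta$. Since $\|R_n\|=1$, we get $\|\hat R_n\|\leq 1+2\theta$.

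\textbf{Step 1: the iterates stay in the chart.} The bound $d(\x_{n+1},\x_n)\leq C_r\Delta t(1+2\theta)L\,d(\x_n,\x^*)$ follows as in the previous proof. We shrink $r_1$ by the factor $(1+2\theta)$ so that $\x_{n+1}\in U_{\delta_1}$. Then $\zeta_n=\mathrm{Ret}_{\x^*}^{-1}(\x_n)$ is well defined.

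\textbf{Step 2: split the update.} The map $\x\mapsto\hat P_n$ need not be smooth, or even a function of $\x_n$, so we cannot Taylor-expand $F$ directly. Instead decompose
\[
\x_{n+1}=\mathrm{Ret}_{\x_n}\bigl(-\Delta t R_n\gradient f(\x_n)+\mathbf{e}_n\bigr),\qquad \mathbf{e}_n=2\Delta t\bigl(\textstyle\sum_i\vvec^i_n{\vvec^i_n}^\top-\hat P_n\bigr)\gradient f(\x_n),
\]
with $\|\mathbf{e}_n\|\leq 2\theta\Delta t\|\gradient f(\x_n)\|$.

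Define the smooth two-variable map $\tilde F(\zeta,\mathbf{e})=\mathrm{Ret}_{\x^*}^{-1}\circ\Psi(\x,\eta(\x)+\mathbf{e})$ with $\x=\mathrm{Ret}_{\x^*}(\zeta)$. Here $\mathbf{e}$ is transported into $T_{\x^*}\M$ through a smooth identification of tangent spaces, for instance via $\Pvec(\x)$ or the differential of the retraction.

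Its linearization at $(0,0)$ is $(\I-\Delta tR^*H^*)[\zeta]+\mathbf{e}$, as computed in \eqref{eq: first order term}, since $D_\eta\Psi(\x^*,0)=\I$. Taylor expansion then gives
\[
\|\zeta_{n+1}\|\leq(1-\Delta t\mu)\|\zeta_n\|+\|\mathbf{e}_n\|+C(\|\zeta_n\|+\|\mathbf{e}_n\|)^2 .
\]

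\textbf{Step 3: bound the error term.} Use $\|\gradient f(\x_n)\|\leq L\,d(\x_n,\x^*)\leq L(1+O(\|\zeta_n\|))\|\zeta_n\|$, which holds because $\mathrm{Ret}_{\x^*}$ is a local diffeomorphism with differential $\I$ at $0$. This yields
\[
\|\zeta_{n+1}\|\leq\bigl(1-\Delta t\mu+2\theta\Delta tL\bigr)\|\zeta_n\|+C'\|\zeta_n\|^2 .
\]
The hypothesis $\theta<\mu/(2L)$ makes the linear factor $\rho=1-\Delta t(\mu-2\theta L)<1$. Because $\Delta t\leq 1/L$, we also have $\rho\geq 0$.

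\textbf{Step 4: close the induction.} Choose $\|\zeta_0\|\leq\Delta t(\mu-2\theta L)/(2C')$, within the chart radius. Induction then gives $\|\zeta_{n+1}\|\leq\bigl(1-\tfrac{\Delta t}{2}(\mu-2\theta L)\bigr)\|\zeta_n\|$. This is a linear rate that depends on $\kappa$ and $\theta$, and it reduces to Proposition \ref{convergence analysis normal case} when $\theta=0$.

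\textbf{Main obstacle.} The delicate point is Step 2. We must expand uniformly in the non-smooth perturbation $\mathbf{e}_n$ and keep the constants $C,C'$ independent of $n$. We also need the factor relating $d(\x_n,\x^*)$ to $\|\zeta_n\|$ to be absorbed into the quadratic term. Joint smoothness of $\Psi$ on a compact neighborhood of $(\x^*,0)$ in the tangent bundle should provide both.
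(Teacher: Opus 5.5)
Your proposal is correct and follows essentially the paper's route. The paper also compares the exact-eigenvector map $F$ with the perturbed map $F_2$, bounds $\|F(\zeta_n)-F_2(\zeta_n)\|\leq 2\Delta t\theta L\|\zeta_n\|+C_{2,r}\|\zeta_n\|^2$, adds this to the bound $(1-\Delta t\mu)\|\zeta_n\|+C_{1,r}\|\zeta_n\|^2$ from Proposition \ref{convergence analysis normal case}, and closes with the same induction, giving the rate $1-\frac{\Delta t}{2}(\mu-2\theta L)$ (which equals $1-\frac{\mu}{2L}+\theta$ at $\Delta t=1/L$); your two-variable map $\tilde F(\zeta,\mathbf{e})$ and the $(1+2\theta)$ factor in Step 1 just make rigorous two points the paper glosses over, namely that $\hat P_n$ is not a smooth function of $\x_n$ and that $\|\I-2\hat P_n\|$ may exceed $1$.
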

\begin{proof}
Similar with the proof of Proposition \ref{convergence analysis normal case}, if $r_1<\min(\delta_1,\frac{l}{L\Delta t},\frac{\delta_1}{C_r\Delta t L+1})$ and $\x_n\in  U_{r_1}$, from
$$\x_{n+1}=\mathrm{Ret}_{\x_n}\left(-\Delta t\left(\I-2
\hat{P}_n
\right)\gradient  f(\x_n)\right),$$
we have that 
$$
d(\x_{n+1},\x_n)
\leq C_r\Delta t \|\gradient f(\x_n)  \|\leq C_r\Delta t L d(\x_n,\x^*), d(\x_{n+1},\x^*) \leq \delta_1.
$$
Using the coordinates introduced by \eqref{eq: zeta defn}, and
considering the following maps restricted to $T_{\x^*}\M:$
$$
F=\mathrm{Ret}_{\x^*}^{-1}\circ \mathrm{Ret}_{\x}\left(-\Delta t\left(\I-2\sum_{i=1}^k \vvec^i(\x){\vvec^i(\x)}^\top\right)\gradient f(\x)\right) \circ \mathrm{Ret}_{\x^*},
$$
and 
$$
F_2=\mathrm{Ret}_{\x^*}^{-1}\circ \mathrm{Ret}_{\x}\left(-\Delta t\left(\I-2\sum_{i=1}^k \vvec^i(\x){\vvec^i(\x)}^\top-2V_{err}\right)\gradient f(\x)\right) \circ \mathrm{Ret}_{\x^*},
$$
we get $\zeta_{n+1}=F_2(\zeta_n)$ with  $\|V_{err}\|=\|
\hat{P}_n-\sum_{i=1}^k\vvec_n^i{\vvec_n^i}^\top\|\leq \theta.$ We thus have
$$
\|F(\zeta_n)-F_2(\zeta_n)\|\leq 2 \Delta t \theta L\|\zeta_n\|+C_{2,r}\|\zeta_n\|^2.  
$$
From Proposition \ref{convergence analysis normal case}, if $\Delta t\leq 1/L$, we have that $$\|F(\zeta_n)\|\leq (1-\Delta t \mu)\|\zeta_n\|+C_{1,r}\|\zeta_n\|^2,$$ and hence
\begin{equation}\label{eq: iteration inexact eigenvector}
\begin{aligned}
\|\zeta_{n+1}\|&=\|F_2(\zeta_n)\|\leq \|F(\zeta_n)\|+\|F(\zeta_n)-F_2(\zeta_n)\|\\
&\leq (1-\Delta t(\mu-2\theta L))\|\zeta_n\|+C_{3,r}\|\zeta_n\|^2.
\end{aligned}
\end{equation}
Same as in Proposition \ref{convergence analysis normal case}, the iteration \eqref{eq: iteration inexact eigenvector} yields a linear convergence rate of $1-\frac{\mu}{2L}+\theta$,
which depends on both the condition number and the error of the approximate eigendirection.
\end{proof}

\subsection{Discrete momentum-based constraint saddle dynamics}\label{subsec: momentum with exact eigenvector}
We analyze the convergence properties and rates of discrete MCSD in this subsection, showing that momentum reduces the condition-number dependence of the convergence rate from $\kappa$ to $\sqrt{\kappa}$.

By discretizing the continuous-time MCSD in \eqref{dynamics with momentum with Euclidean Language} with $\gamma=1-\gamma_1\Delta t, \gamma_2=1$, we obtain a discrete-time algorithm with exact eigenvector assumption: 
\begin{equation}\label{equation: iteration in tangential space with momentum}
\begin{cases}
    \bar{\rvec}_n=-\Delta t \left(\I-2\sum_{i=1}^k {\vvec}_{i,n}{\vvec}_{i,n}^\top\right)\gradient f(\x_n)+\gamma\rvec_n,\\
    \x_{n+1}=\mathrm{Ret}_{ \x_n}(\bar{\rvec}_n),\\
    \rvec_{n+1}=P_{\x_n\rightarrow\x_{n+1}}   \bar{\rvec}_n,\\

    \{\vvec_{i,n+1}\}_{i=1}^k=\text{EigenSolver}(\{\vvec^i_n\}_{i=1}^k,\hessian f(\x_{n+1})),
\end{cases}
\end{equation}
with $\x_0\in \M,\rvec_0=\mathbf{0},\{\vvec_i\}_1^k \in T_{\x_0}$. 
\begin{remark}
    In the continuous-time MCSD in \eqref{dynamics with momentum with Euclidean Language}, the parameter $\gamma_1$ acts as a damping coefficient that controls the rate of momentum dissipation, and thus, a smaller $\gamma_1$ (equivalently, a larger $\gamma$) results in stronger momentum effects. The parameter $\gamma_2$ acts as a relaxation parameter that determines the time scale of the dynamics. For simplicity, we set $\gamma_2=1$, corresponding to the same time scale for the dynamics of $\x$ and $\rvec$. The results in this subsection can be directly extended to general $\gamma_2>0$.
\end{remark}

For brevity, we assume that the retraction is given by the exponential map \cite{boumal2023introduction,hu2020brief}. The results, however, remain valid for more general retractions, since their updates coincide at first order and differ only in higher-order terms that do not affect the analysis.

The following lemma converts the iteration on the manifold into an iteration in the tangent space $T_{\x^*}\M$, which is convenient for our convergence analysis.
\begin{lemma}[Iteration expressed in local coordinates]\label{lemma: iteration in tangential space}
Suppose the Assumption \ref{assumption: Hessian} holds. There exists an $r>0$ such that if ${\x_n}\in \mathcal{M}$ satisfies \(d(\x_n,\x^*)<r\), we have the well-defined local coordinates
$$
\zeta_n := \exp_{\x^*}^{-1}(\x_n)\in T_{\x^*}\M,
\rho_n := P_{x_n\to \x^*} \rvec_n\in T_{\x^*}\M.
$$
Moreover, there exists an $r^*>0$, if $\left\|\begin{pmatrix}
\zeta_{n}\\
\rho_{n}
\end{pmatrix}\right\|=\sqrt{\|\zeta_{n}\|^2+\|\rho_{n}\|^2}<r^*$, the iteration in \eqref{equation: iteration in tangential space with momentum} can be written in the form of the local coordinates as
\begin{equation}\label{eq: iteration in the tangential plane with second order error}
\begin{pmatrix}
\zeta_{n+1}\\
\rho_{n+1}
\end{pmatrix}=
\underbrace{\begin{pmatrix}
\I - \Delta t R^* H^* & \gamma \I\\
-\Delta t R^* H^* & \gamma \I
\end{pmatrix}}_{\displaystyle := A}
\begin{pmatrix}
\zeta_n\\
\rho_n
\end{pmatrix}
+res_n,
\end{equation}
where $A: T_{\x^*}\M\oplus T_{\x^*}\M\rightarrow T_{\x^*}\M\oplus T_{\x^*}\M$ is defined as a linear operator, 
$R^* := \I - 2\sum_{i=1}^k \vvec_i^*{\vvec_i^*}^\top$,
$H^*:= \hessian f(\x^*),
$ and the remainder satisfies
  $
  \|res_n\|_2\leq  C\big(\|\zeta_n\|^2+\|\rho_n\|^2\big)
  $
  for some constant $C>0$. Here, \(\|\cdot\|\) denotes the Euclidean norm induced by the Riemannian metric at \(\x^*\), i.e. the $L_2$-norm on \(T_{\x^*}\M\).
\end{lemma}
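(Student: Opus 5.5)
The plan follows the proof of Proposition \ref{convergence analysis normal case}. We express one step of \eqref{equation: iteration in tangential space with momentum} as a smooth map $\Phi:(\zeta_n,\rho_n)\mapsto(\zeta_{n+1},\rho_{n+1})$ on a neighborhood of $0$ in $T_{\x^*}\M\oplus T_{\x^*}\M$, check that $\Phi(0,0)=(0,0)$, compute its differential at the origin, and apply Taylor's theorem with a quadratic remainder.

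\emph{Well-definedness.} Since $\exp_{\x^*}$ is a diffeomorphism on a ball $B_r(0_{\x^*})$ (Lemma \ref{local diffeomorphism}), $\zeta_n$ is well defined for $d(\x_n,\x^*)<r$, and then $\|\zeta_n\|=d(\x_n,\x^*)$. For $\x\in U_r$ and a tangent vector $\rho\in T_{\x^*}\M$, set $\rvec=P_{\x^*\to\x}\rho$; parallel transport is an isometry, so $\|\rvec\|=\|\rho\|$. Now bound $\bar\rvec_n$ by
\[
\|\bar\rvec_n\|\le \Delta t L\|\zeta_n\|+\gamma\|\rho_n\|.
\]
Choosing $r^*$ small gives $\|\bar\rvec_n\|\le l$, so $\x_{n+1}=\exp_{\x_n}(\bar\rvec_n)$ is well defined. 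As in the proof of Proposition \ref{convergence analysis normal case}, $d(\x_{n+1},\x^*)$ then stays below $r$, so $\zeta_{n+1}$ and $\rho_{n+1}$ are defined.

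\emph{The map $\Phi$ and its smoothness.} Write
\[
\Phi(\zeta,\rho)=\Bigl(\exp_{\x^*}^{-1}\bigl(\exp_{\x}(\eta)\bigr),\;P_{\exp_\x(\eta)\to\x^*}\,P_{\x\to\exp_\x(\eta)}\,\eta\Bigr),
\]
where
\[
\x=\exp_{\x^*}(\zeta),\qquad \eta=\eta(\zeta,\rho)=-\Delta t\,R(\x)\,\grad f(\x)+\gamma P_{\x^*\to\x}\rho .
\]
Lemma \ref{lem:Riesz-proj-smooth} gives smoothness of $R(\x)$. Parallel transport along minimizing geodesics inside a normal neighborhood depends smoothly on its endpoints. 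Hence $\Phi$ is smooth, and $\Phi(0,0)=(0,0)$ because $\grad f(\x^*)=0$.

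\emph{The differential at the origin.} This is the main obstacle: we must verify that the curvature and holonomy corrections from composing parallel transports around the triangle $\x^*\to\x\to\x_{n+1}\to\x^*$ contribute only at second order. The transport $P_{\x_{n+1}\to\x^*}P_{\x\to\x_{n+1}}P_{\x^*\to\x}$ differs from the identity by $O(\|\zeta\|\,\|\eta\|)$ (holonomy around a small geodesic triangle). Likewise $\exp_{\x^*}^{-1}(\exp_\x(P_{\x^*\to\x}w))=\zeta+w+O(\|\zeta\|\,\|w\|+\|w\|^2)$, which follows from $D\exp_{\x^*}(0)=\I$ and a Taylor expansion of the smooth two-variable map $(\zeta,w)\mapsto\exp_{\x^*}^{-1}\exp_{\exp_{\x^*}\zeta}(P w)$, whose mixed second derivatives are bounded on a compact ball. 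To first order,
\[
P_{\x\to\x^*}\,\eta=-\Delta t\,R^*H^*\zeta+\gamma\rho+O(\|\zeta\|^2+\|\rho\|^2).
\]
This uses $D_\x\bigl(R(\x)\grad f(\x)\bigr)|_{\x^*}=R^*H^*$, which holds because the term involving the derivative of $R$ multiplies $\grad f(\x^*)=0$, exactly as in \eqref{eq: first order term}.

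Combining these expansions gives
\[
\zeta_{n+1}=\zeta_n+(-\Delta t R^*H^*\zeta_n+\gamma\rho_n)+res^{(1)},\qquad \rho_{n+1}=-\Delta t R^*H^*\zeta_n+\gamma\rho_n+res^{(2)},
\]
which is the operator $A$. Both remainders vanish to second order at the origin, so a uniform bound on the second derivative of $\Phi$ over the compact ball of radius $r^*$ yields $\|res_n\|\le C(\|\zeta_n\|^2+\|\rho_n\|^2)$.
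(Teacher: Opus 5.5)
Your proposal is correct and follows essentially the same route as the paper. The paper likewise first writes $\bar\rho_n=-\Delta t R^*H^*\zeta_n+\gamma\rho_n+O(\|\zeta_n\|^2)$, using that the derivative of $R$ multiplies $\grad f(\x^*)=0$. It then Taylor-expands $F(\zeta,\eta)=\exp_{\x^*}^{-1}(\exp_{\x(\zeta)}(P_{\x^*\to\x(\zeta)}\eta))$ and the triple parallel transport $G(\zeta,\eta)$ to second order, reading off the differentials from $F(\zeta,0)=\zeta$, $F(0,\eta)=\eta$, $G(0,\eta)=\eta$ and $G(\zeta,0)=0$. Your holonomy bound $O(\|\zeta\|\,\|\eta\|)$ is exactly what these identities give, since the composed transport is the identity on both axes, so the two arguments differ only in that you package everything into the single map $\Phi$.
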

\begin{proof}
From Lemma \ref{local diffeomorphism}, there exists exists an $r>0$ such that if \({\x_n}\subset \mathcal{M}\) satisfies \(d(\x_n,\x^*)\leq r\), the local coordinates exist. Let
$$
\x(\zeta):=\exp_{\x^*}(\zeta),
\widehat{\grad f}(\zeta):=P_{\x(\zeta)\to x^*}\big(\grad f(\x(\zeta))\big),
$$
and define similarly
$$
\rho_n = P_{\x_n\to x^*}\rvec_n,
\bar\rho_n = P_{\x_n\to \x^*}\bar \rvec_n.
$$
By standard normal-coordinate expansions of the Riemannian gradient, we have:
$$
\widehat{\grad f}(\zeta)
= H^* \zeta + g_2(\zeta),
\|g_2(\zeta)\|\leq C_1 \|\zeta\|^2.
$$
Let
$$
\widehat{R}(\zeta) :=
P_{\x(\zeta)\to \x^*}\circ R(\x(\zeta))\circ P_{\x^*\to \x(\zeta)}.
$$
Since \(R(\x)\) is smooth from Lemma \ref{lem:Riesz-proj-smooth}, and parallel transport is also smooth, we have that
$$
\widehat{R}(\zeta) = R^* + R_1(\zeta),
\|R_1(\zeta)\|_2\leq C_2\|\zeta\|.
$$
Thus the momentum update satisfies:
\begin{equation}\label{eq: barrho and rho}
\begin{aligned}
\bar\rho_n
= P_{\x_n\to \x^*}\bar \rvec_n
&= -\Delta t \widehat{R}(\zeta_n)\widehat{\grad f}(\zeta_n)+ \gamma\rho_n\\
  &= -\Delta t R^* H^* \zeta_n + \gamma\rho_n + e_n,
  \end{aligned}
\end{equation}
  where
$
  \|e_n\|
  \le C_3\|\zeta_n\|^2 .
$
Define the local coordinate update map
$$
F(\zeta,\eta)
:= \exp_{\x^*}^{-1}\left(
\exp_{\x(\zeta)}\big(P_{\x^*\to \x(\zeta)}\eta\big)
\right).
$$
Then, we have that \(\x_{n+1}=\x(\zeta_{n+1})\) with
$
\zeta_{n+1}=F(\zeta_n,\bar\rho_n).
$
Since 
$$
F(0,0)=0,
F(\zeta,0)=\zeta,
F(0,\eta)=\eta,
$$
the first-order derivatives satisfy:
$$
\partial_\zeta F(0,0)=\I,
\partial_\eta F(0,0)=\I.
$$
Hence, the second-order Taylor expansion yields
\begin{equation}\label{eq: taylor expansion of iteration of the momentum algorithm}
F(\zeta,\eta)
= \zeta + \eta + q_F(\zeta,\eta),
\|q_F(\zeta,\eta)\|\leq C_4(\|\zeta\|^2+\|\eta\|^2).
\end{equation}
Substituting \((\zeta,\eta)=(\zeta_n,\bar\rho_n)\) into \eqref{eq: taylor expansion of iteration of the momentum algorithm} gives
\begin{equation}\label{eq: zeta update}
\zeta_{n+1}=F(\zeta_n,\bar\rho_n)
=\zeta_n + \bar\rho_n + q_n,
\|q_n\|\leq C_4(\|\zeta_n\|^2+\|\bar\rho_n\|^2).
\end{equation}
Substituting \eqref{eq: barrho and rho} into \eqref{eq: zeta update} yields
\begin{equation}\label{eq: zeta linear}
\zeta_{n+1}
=(\I - \Delta t R^*H^*)\zeta_n + \gamma\rho_n + res_n^{(1)},
\|res_n^{(1)}\|\leq C'_4(\|\zeta_n\|^2+\|\rho_n\|^2).
\end{equation}
Note that $\x(\zeta_n)=\x_n, \x(F(\zeta_n,\bar \rho_n))=\x(\zeta_{n+1})=\x_{n+1}$. The momentum update is
\begin{equation}\label{eq: momentum update}
\rho_{n+1}
= P_{\x_{n+1}\to \x^*} r_{n+1}
= G(\zeta_n,\bar\rho_n),
\end{equation}
where
$$
G(\zeta,\eta)
:=P_{\x(F(\zeta,\eta))\to \x^*}
P_{\x(\zeta)\to \x(F(\zeta,\eta))}
P_{\x^*\to \x(\zeta)}\eta.
$$
Observe that
$$
G(0,0)=0,
G(0,\eta)=\eta,
G(\zeta,0)=0,
$$
which implies
$$
\partial_\eta G(0,0)=\I, \partial_\zeta G(0,0)=0.
$$
Thus, the second-order Taylor expansion gives:
\begin{equation}\label{eq: G}
G(\zeta,\eta)=\eta + q_G(\zeta,\eta),
\|q_G(\zeta,\eta)\|\leq C_5(\|\zeta\|^2+\|\eta\|^2).
\end{equation}
Substituting \((\zeta,\eta)=(\zeta_n,\bar\rho_n)\) into \eqref{eq: G} and combining \eqref{eq: momentum update} and \eqref{eq: barrho and rho} yield
\begin{equation}\label{eq: rho linear}
\rho_{n+1}
= \bar\rho_n + r_n^{(2)}
= -\Delta t R^*H^*\zeta_n + \gamma\rho_n + res_n^{(2)},
\end{equation}
where
$$
\|res_n^{(2)}\|
\leq C'_5(\|\zeta_n\|^2+\|\rho_n\|^2).
$$
Combining \eqref{eq: zeta linear} and \eqref{eq: rho linear}, we have \eqref{eq: iteration in the tangential plane with second order error} with
$\|res_n\|_2\leq C(\|\zeta_n\|^2+\|\rho_n\|^2)$ for
$$
A :=
\begin{pmatrix}
\I - \Delta t R^* H^* & \gamma \I\\
-\Delta t R^* H^* & \gamma \I
\end{pmatrix} \text{ and }\; res_n:=\begin{pmatrix}res_n^{(1)}\\ res_n^{(2)}\end{pmatrix}.
$$
\end{proof}

\begin{lemma}[Spectrum radius of the linear iteration operator]\label{thm:HB-opt}
Suppose Assumption \ref{assumption: Hessian} holds. The spectrum of the self-adjoint operator $K:=R^*H^*$ satisfies $\sigma(K)\subset[\mu,L]$. The linear operator $A:T_{\x^*}\M \oplus T_{\x^*}\M \to T_{\x^*}\M\oplus T_{\x^*}\M$ is defined equivalently by
\begin{equation}\label{eq:A-operator}
A
\begin{pmatrix}
\zeta\\ \rho
\end{pmatrix}
=
\begin{pmatrix}
(I-\Delta t K)\zeta + \gamma \rho\\[2pt]
-\Delta t K \zeta + \gamma \rho
\end{pmatrix}.
\end{equation}
Then, we have the following optimal bound on the spectral radius
\begin{equation}\label{eq:HB-opt-rate}
\rho(A) \leq \frac{\sqrt{\kappa}-1}{\sqrt{\kappa}+1}, \kappa:=\frac{L}{\mu},
\end{equation}
by choosing 
\begin{equation}\label{eq:HB-opt-params}
\Delta t=\Delta t^* = \frac{4}{\big(\sqrt{L}+\sqrt{\mu}\big)^2}, \gamma=\gamma^* = \left(\frac{\sqrt{L}-\sqrt{\mu}}{\sqrt{L}+\sqrt{\mu}}\right)^2.
\end{equation}
\end{lemma}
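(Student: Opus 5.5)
We will first show that $K=R^*H^*$ is self-adjoint with $\sigma(K)\subset[\mu,L]$, and then reduce $A$ to a family of $2\times 2$ matrices indexed by the eigenvalues of $K$.

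\textbf{Step 1: the spectrum of $K$.} Let $\{\vvec_i^*\}_{i=1}^{d-m}$ be an orthonormal eigenbasis of $H^*$ in $T_{\x^*}\M$, with eigenvalues $\lambda_i^*$. Then $R^*\vvec_i^*=-\vvec_i^*$ for $i\le k$ and $R^*\vvec_i^*=\vvec_i^*$ for $i>k$. Hence $R^*$ and $H^*$ are simultaneously diagonal, $K$ is self-adjoint, and $K\vvec_i^*=|\lambda_i^*|\vvec_i^*$. Assumption \ref{assumption: Hessian} at $\x=\x^*$ then gives $|\lambda_i^*|\in[\mu,L]$, so $\sigma(K)\subset[\mu,L]$.

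\textbf{Step 2: block reduction.} In the basis $\{(\vvec_i^*,0),(0,\vvec_i^*)\}$ the operator $A$ is block diagonal, with blocks
$$
A_\lambda=\begin{pmatrix}1-\Delta t\lambda & \gamma\\ -\Delta t\lambda & \gamma\end{pmatrix},\qquad \lambda=|\lambda_i^*|\in[\mu,L].
$$
Therefore $\rho(A)=\max_i\rho(A_{|\lambda_i^*|})$. The characteristic polynomial of $A_\lambda$ is
$$
z^2-(1+\gamma-\Delta t\lambda)z+\gamma=0,
$$
the standard heavy-ball polynomial. Its roots have product $\gamma$. If the discriminant $(1+\gamma-\Delta t\lambda)^2-4\gamma\le 0$, the roots are complex conjugates and both have modulus exactly $\sqrt{\gamma}$.

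\textbf{Step 3: parameter choice.} The discriminant is nonpositive exactly when
$$
(1-\sqrt\gamma)^2\le \Delta t\lambda\le (1+\sqrt\gamma)^2 .
$$
With $\Delta t^*,\gamma^*$ from \eqref{eq:HB-opt-params} we have $\sqrt{\gamma^*}=(\sqrt L-\sqrt\mu)/(\sqrt L+\sqrt\mu)$. This gives
$$
1-\sqrt{\gamma^*}=\frac{2\sqrt\mu}{\sqrt L+\sqrt\mu},\qquad 1+\sqrt{\gamma^*}=\frac{2\sqrt L}{\sqrt L+\sqrt\mu}.
$$
So $(1\mp\sqrt{\gamma^*})^2$ equals $\Delta t^*\mu$ and $\Delta t^*L$ respectively. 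Consequently every $\lambda\in[\mu,L]$ falls in the complex (or double-root) regime. Each block then has spectral radius $\sqrt{\gamma^*}=(\sqrt\kappa-1)/(\sqrt\kappa+1)$, which is \eqref{eq:HB-opt-rate}.

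\textbf{Optimality (the main obstacle).} The word ``optimal'' requires showing that no other choice of $(\Delta t,\gamma)$ achieves a smaller $\max_{\lambda\in[\mu,L]}\rho(A_\lambda)$. The argument runs as follows.

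For any parameters, $\rho(A_\lambda)\ge\sqrt\gamma$, since the product of the roots is $\gamma$. If some endpoint $\lambda\in\{\mu,L\}$ lies in the real-root regime, write $s=1+\gamma-\Delta t\lambda$; the larger root then has modulus $\bigl(|s|+\sqrt{s^2-4\gamma}\bigr)/2$. A standard two-endpoint comparison (Polyak's argument) shows that this exceeds $(\sqrt\kappa-1)/(\sqrt\kappa+1)$ unless both endpoints satisfy $\Delta t\mu\ge(1-\sqrt\gamma)^2$ and $\Delta t L\le(1+\sqrt\gamma)^2$.

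These two constraints together give $\sqrt{L/\mu}\le(1+\sqrt\gamma)/(1-\sqrt\gamma)$, i.e. $\sqrt\gamma\ge(\sqrt\kappa-1)/(\sqrt\kappa+1)$. Equality holds precisely for \eqref{eq:HB-opt-params}. This is the step needing the most care; I would present it mainly as a remark, since the proposition's content is the upper bound.
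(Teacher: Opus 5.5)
Your proof of the bound follows the paper's argument: simultaneous diagonalization of $R^*$ and $H^*$ giving $K\vvec_i^*=|\lambda_i^*|\vvec_i^*$, reduction of $A$ to $2\times 2$ blocks on $\operatorname{span}\{(\vvec_i^*,0),(0,\vvec_i^*)\}$, and a nonpositive discriminant at $(\Delta t^*,\gamma^*)$ so that both roots have modulus $\sqrt{\gamma^*}$; your interval test $(1-\sqrt{\gamma})^2\le\Delta t\lambda\le(1+\sqrt{\gamma})^2$ is just a repackaging of the paper's explicit formula $4\bigl((L+\mu-2\lambda)^2-(L-\mu)^2\bigr)/(\sqrt{L}+\sqrt{\mu})^4\le 0$. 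The paper never proves the ``optimal'' claim, so your remark goes beyond it, and as written it is imprecise, because the root at a single real-root endpoint need not exceed the bound on its own; a clean finish is that if all roots of $p_\lambda(z)=z^2-(1+\gamma-\Delta t\lambda)z+\gamma$ lie in $\{|z|\le r\}$ with $r<1$, then $p_\mu(r)\ge 0$, $p_L(-r)\ge 0$ and $\gamma\le r^2$ give $\kappa\le\frac{(1+r)(r+\gamma)}{(1-r)(r-\gamma)}\le\bigl(\frac{1+r}{1-r}\bigr)^2$, i.e.\ $r\ge\frac{\sqrt{\kappa}-1}{\sqrt{\kappa}+1}$.
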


\begin{proof}
From Assumption \ref{assumption: Hessian}, $H^*:\mathcal{H}\to\mathcal{H}$ is a self-adjoint operator with the spectrum given by
\[
\sigma(H^*)\subset[-L,-\mu]\cup[\mu,L], 0<\mu\leq L<\infty.
\]
$R^* := \I - 2\sum_{i=1}^k \vvec_i^*{\vvec_i^*}^\top$ is simultaneously diagonalizable with $H^*$, then $K$ is self-adjoint and admits the eigenpairs $\{(|\lambda_i^*|,\vvec_i^*)\}_{i=1}^{d-m}$, so that the spectrum of $K$ satisfies $\sigma(K)\subset[\mu,L]$. For a fixed eigenpair $(|\lambda_i^*|,\vvec_i^*)$ with $|\lambda_i^*|\in[\mu,L]$, consider the two-dimensional subspace
\[
E_{\vvec_i^*} := \operatorname{span}\{(\vvec_i^*,0),(0,\vvec_i^*)\}\subset T_{\x^*}\M\oplus T_{\x^*}\M,
\]
we have \(
T_{\x^*}\M\oplus T_{\x^*}\M
=
\bigoplus_{i=1}^{d-m} E_{\vvec_i^*}.
\)
For any $(\zeta,\rho)=(a \vvec_i^*,b \vvec_i^*)\in E_{\vvec_i^*}$, by the definition \eqref{eq:A-operator}, we have
\[
A
\begin{pmatrix}
\zeta\\ \rho
\end{pmatrix}
=
\begin{pmatrix}
( I-\Delta t K)(a \vvec_i^*)+\gamma(b \vvec_i^*)\\
-\Delta t K(a \vvec_i^*)+\gamma(b \vvec_i^*)
\end{pmatrix}
=
\begin{pmatrix}
(1-\Delta t|\lambda_i^*|)a \vvec_i^* + \gamma b \vvec_i^*\\
-\Delta t |\lambda_i^*| a \vvec_i^* + \gamma b \vvec_i^*
\end{pmatrix}.
\]
Thus, in the basis of $E_{\vvec_i^*}$, the restriction $A|_{E_{\vvec_i^*}}$ is represented by the $2\times 2$ matrix
\begin{equation*}
B_i=
\begin{pmatrix}
1-\Delta t|\lambda_i^*| & \gamma\\
-\Delta t |\lambda_i^*| & \gamma
\end{pmatrix}.
\end{equation*}
Hence,
$\rho(A) = \max_{1\leq i\leq d-m} \rho(B_i)$.
The characteristic polynomial of $B_i$ is
\[
p(\lambda)
= \det\big(\lambda I - B_i\big)
= \lambda^2 - (1+\gamma-\Delta t|\lambda_i^*|)\lambda + \gamma.
\]
Consequently, the eigenvalues of $B_i$ are
\begin{equation}\label{eq:lambda-pm}
\lambda_\pm
=
\frac{1}{2}\Big[
1+\gamma-\Delta t|\lambda_i^*|
\ \pm\
\sqrt{(1+\gamma-\Delta t|\lambda_i^*|)^2 - 4\gamma}
\Big].
\end{equation}
Note that the product is $\lambda_+\lambda_- = \gamma>0$.
By choosing
$\Delta t = 
\Delta t^*$ and $\gamma=\gamma^*$ given by \eqref{eq:HB-opt-params},
we have 
$$
(1+\gamma^*-\Delta t^*|\lambda_i^*|)^2 - 4\gamma^*=\frac{4((L+\mu-2|\lambda_i^*|)^2-(L-\mu)^2)}{(\sqrt{L}+\sqrt{\mu})^4}\leq 0.$$
Then the two eigenvalues, $\lambda_\pm$, have the same real part with
\[|\lambda_+| = |\lambda_-| =\sqrt{\gamma^*}=\frac{\sqrt{\kappa}-1}{\sqrt{\kappa}+1}.
\]
We thus have \eqref{eq:HB-opt-rate}.
\end{proof}

\begin{lemma}\label{L2 norm of the iteration matrix}
Under the assumptions of the  Lemma \ref{thm:HB-opt}, by choosing $\Delta t = 
\Delta t^*$ and $\gamma=\gamma^*$ given by \eqref{eq:HB-opt-params},
there exists a constant $C_0\geq 1$ such that
$$
\|A^k\|_2:=\max_{\|\vvec\|=1,\vvec \in T_{\x^*}\oplus T_{\x^*}} \|A^k \vvec\|\leq C_0 \left(1-\frac{1}{1+\sqrt{\kappa}}\right)^k, k=1,2,\cdots.
$$
\end{lemma}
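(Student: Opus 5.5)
The plan is to reuse the block decomposition from the proof of Lemma \ref{thm:HB-opt}. Since $T_{\x^*}\M\oplus T_{\x^*}\M=\bigoplus_{i=1}^{d-m}E_{\vvec_i^*}$ and the subspaces $E_{\vvec_i^*}$ are mutually orthogonal (the $\vvec_i^*$ are orthonormal), $A$ is orthogonally block diagonal with $2\times 2$ blocks $B_i$. Hence $A^k$ is block diagonal with blocks $B_i^k$, and
$$\|A^k\|_2=\max_{1\le i\le d-m}\|B_i^k\|_2 .$$
It therefore suffices to bound $\|B_i^k\|_2$ uniformly in $i$. We first note that $\sqrt{\gamma^*}=\frac{\sqrt\kappa-1}{\sqrt\kappa+1}=1-\frac{2}{1+\sqrt\kappa}\le 1-\frac{1}{1+\sqrt\kappa}$. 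The target rate is therefore strictly larger than the spectral radius, and this slack will absorb any polynomial growth coming from non-normality.

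Fix $i$ and write $q:=1-\frac{1}{1+\sqrt\kappa}$, so that $\rho(B_i)=\sqrt{\gamma^*}<q$. Since there are only finitely many blocks and each $B_i$ has spectral radius strictly below $q$, Gelfand's formula already gives $\sup_k\|B_i^k\|/q^k<\infty$ for each $i$. Taking the maximum over $i$ then yields a constant $C_0$, which we may enlarge to be at least $1$. To make the argument concrete rather than existential, I would use the Cayley--Hamilton identity for $2\times2$ matrices: if $B$ has eigenvalues $\lambda_\pm$, then
$$B^k=\frac{\lambda_+^k-\lambda_-^k}{\lambda_+-\lambda_-}B-\lambda_+\lambda_-\frac{\lambda_+^{k-1}-\lambda_-^{k-1}}{\lambda_+-\lambda_-}I,$$
with the confluent limit $B^k=k\lambda^{k-1}B-(k-1)\lambda^kI$ when $\lambda_+=\lambda_-$. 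The divided differences satisfy $\bigl|\frac{\lambda_+^k-\lambda_-^k}{\lambda_+-\lambda_-}\bigr|\le k\,\gamma^{*(k-1)/2}$, because $|\lambda_\pm|=\sqrt{\gamma^*}$ by Lemma \ref{thm:HB-opt}. This bound holds uniformly, including near the coalescence case $|\lambda_i^*|\in\{\mu,L\}$. Combined with $\|B_i\|\le 1+\gamma^*+2\Delta t^*L\le 11$, this gives
$$\|B_i^k\|\le C\,k\,(\sqrt{\gamma^*})^{k-1}$$
with $C$ independent of $i$.

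The main obstacle, which is modest, is converting the factor $k(\sqrt{\gamma^*})^{k}$ into $C_0q^k$. We need $\sup_k k(\sqrt{\gamma^*}/q)^k<\infty$. This holds because $\sqrt{\gamma^*}/q=\frac{\sqrt\kappa-1}{\sqrt\kappa}<1$, and $\sup_k k s^k\le \frac{1}{e\ln(1/s)}$. The resulting constant depends on $\kappa$, which the statement permits since $C_0$ is only claimed to exist. Finally, we take $C_0:=\max\{1,\,C\sup_k k(\sqrt{\gamma^*}/q)^k/\sqrt{\gamma^*}\}$, handling $\gamma^*=0$ (i.e.\ $\kappa=1$) separately: there $B_i^2=0$ and the bound is trivial.
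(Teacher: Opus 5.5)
Your proof is correct, and its explicit part takes a different route from the paper. The paper does not use the block structure here. It applies Gelfand's formula to $A$ itself, notes $\rho(A)\le\sqrt{\gamma^*}=1-\frac{2}{1+\sqrt\kappa}<q:=1-\frac{1}{1+\sqrt\kappa}$, concludes $\|A^k\|_2\le q^k$ for all $k>k_0$, and sets $C_0:=\max\{1,\max_{k\le k_0}\|A^k\|_2q^{-k}\}$. That is your first, existential remark, except that it does not need the reduction to the blocks $B_i$.

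Your Cayley--Hamilton argument instead gives the uniform bound $\|B_i^k\|_2\le C\,k(\sqrt{\gamma^*})^{k-1}$ with an absolute constant (e.g.\ $C=12$). Using $\ln\frac{\sqrt\kappa}{\sqrt\kappa-1}\ge\frac{1}{\sqrt\kappa}$, this yields $C_0\le\max\{1,\frac{C}{\sqrt{\gamma^*}}\cdot\frac{\sqrt\kappa}{e}\}$, so $C_0=O(\sqrt\kappa)$ for large $\kappa$. This order cannot be improved in general: if $|\lambda_i^*|=\mu$, the discriminant vanishes and $B_i$ is a nontrivial Jordan block with eigenvalue $\sqrt{\gamma^*}$, so $\|B_i^k\|_2$ is genuinely of order $k(\sqrt{\gamma^*})^{k-1}$.

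The paper's route is shorter and uses nothing beyond $\rho(A)<q$. Yours shows that non-normality is the only reason $C_0>1$, and it makes $C_0$ quantitative. That matters downstream: $C_h>2C_0$ enters the admissible initial radius in Proposition \ref{linear convergence of the discretized CHiSD with momentum} through $C_h^2$. The basin size therefore becomes explicit in $\kappa$; the first term of that minimum scales like $\kappa^{-3/2}$.

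One minor point: your separate case $\gamma^*=0$ cannot occur, since Assumption \ref{assumption: Hessian} requires $L>\mu$, i.e.\ $\kappa>1$.
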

\begin{proof}
    From the Gelfand fomula and Lemma \ref{thm:HB-opt}, we have that $$\rho(A)=\lim_{k\rightarrow \infty}\|A^k\|_2^{\frac{1}{k}}\leq 1-\frac{2}{1+\sqrt{\kappa}}.$$
    Thus, 
    there exists a large enough $k_0$ such that if $k>k_0$, 
    $$\|A^k\|_2\leq \left(1-\frac{1}{1+\sqrt{\kappa}}\right)^k\leq C_0 \left(1-\frac{1}{1+\sqrt{\kappa}}\right)^k.$$     
    For $k\leq k_0$, choosing $$C_0=\max\left(1,\max_{1\leq k\leq k_0}\|A^k\|_2 \left(1-\frac{1}{1+\sqrt{\kappa}}\right)^{-k}\right)$$
    completes the proof.
\end{proof}

\begin{proposition}[Local convergence of the discrete MCSD]\label{linear convergence of the discretized CHiSD with momentum}
    Suppose Assumption \ref{assumption: Hessian} holds. By choosing $\Delta t = 
    \Delta t^*$ and $\gamma=\gamma^*$ given by \eqref{eq:HB-opt-params}, the discrete MCSD, i.e., the iteration point in \eqref{equation: iteration in tangential space with momentum} converges to $\x^*$ with a linear convergence rate, which is dependent on $\sqrt{\kappa}$. In other words, we first fix a constant $C_h>2C_0$ (where $C_0\geq 1$ is defined in Lemma \ref{L2 norm of the iteration matrix}), if the initial condition satisfies $
\|\zeta_{0}\|_2=r_0\leq \min\left(\frac{\sqrt{\kappa}}{(1+\sqrt{\kappa})^2CC_h^2},\frac{r^*}{C_h}\right)$ (where $r^*$ and $C$ are defined in Lemma \ref{lemma: iteration in tangential space}), we have
    \begin{equation}\label{eq: convergence rate momentum}
        \|\zeta^n\|\leq \left\|\begin{pmatrix}
\zeta_{n}\\
\rho_{n}
\end{pmatrix}\right\|_2\leq C_h \left(1-\frac{1}{1+\sqrt{\kappa}}\right)^n \left\|\begin{pmatrix}
\zeta_{0}\\
\rho_{0}
\end{pmatrix}\right\|_2.
    \end{equation}
\end{proposition}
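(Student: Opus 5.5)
The plan is to combine the local-coordinate form of Lemma \ref{lemma: iteration in tangential space} with the uniform power bound of Lemma \ref{L2 norm of the iteration matrix}. The argument is a discrete variation-of-constants estimate closed by strong induction. Write $w_n=(\zeta_n,\rho_n)^\top$ and $q:=1-\frac{1}{1+\sqrt{\kappa}}$. Since $\rvec_0=\mathbf{0}$, we have $\rho_0=0$ and $\|w_0\|=\|\zeta_0\|=r_0$. As long as $\|w_j\|<r^*$ for $j\le n-1$, unrolling \eqref{eq: iteration in the tangential plane with second order error} gives
\begin{equation*}
w_n=A^n w_0+\sum_{j=0}^{n-1}A^{n-1-j}res_j .
\end{equation*}
Lemma \ref{L2 norm of the iteration matrix} and $\|res_j\|\le C\|w_j\|^2$ then yield
\begin{equation*}
\|w_n\|\le C_0q^n r_0+C C_0\sum_{j=0}^{n-1}q^{n-1-j}\|w_j\|^2 .
\end{equation*}

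The induction hypothesis is $\|w_j\|\le C_hq^jr_0$ for all $j\le n-1$. The base case $j=0$ holds because $C_h\ge1$. Under this hypothesis, $\|w_j\|\le C_hr_0\le r^*$ by the assumption $r_0\le r^*/C_h$, so every step is in the regime where Lemma \ref{lemma: iteration in tangential space} applies. (If that lemma needs a strict inequality, one shrinks $r_0$ slightly.) Inserting the hypothesis into the sum gives
\begin{equation*}
\sum_{j=0}^{n-1}q^{n-1-j}C_h^2q^{2j}r_0^2=C_h^2r_0^2q^{n-1}\sum_{j}q^{j}\le C_h^2r_0^2\,\frac{q^{n-1}}{1-q}=C_h^2r_0^2q^{n}\,\frac{(1+\sqrt\kappa)^2}{\sqrt\kappa},
\end{equation*}
using $1-q=\frac{1}{1+\sqrt\kappa}$ and $q^{-1}=\frac{1+\sqrt\kappa}{\sqrt\kappa}$. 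Hence
\begin{equation*}
\|w_n\|\le q^nr_0\Big(C_0+CC_0C_h^2r_0\tfrac{(1+\sqrt\kappa)^2}{\sqrt\kappa}\Big)\le q^nr_0\,(C_0+C_0)=2C_0q^nr_0< C_hq^nr_0,
\end{equation*}
where the second inequality uses $r_0\le\frac{\sqrt\kappa}{(1+\sqrt\kappa)^2CC_h^2}$ and the last uses $C_h>2C_0$. This closes the induction. The first inequality in \eqref{eq: convergence rate momentum}, $\|\zeta_n\|\le\|w_n\|$, is trivial. Since $\zeta_n=\exp_{\x^*}^{-1}(\x_n)$, convergence of $\zeta_n\to0$ gives $\x_n\to\x^*$ at the linear rate $q$, which depends only on $\sqrt\kappa$.

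The delicate step is the bookkeeping of the geometric series. The factor $q^{-1}$ coming from the shift $n-1$ versus $n$ must be absorbed together with $(1-q)^{-1}$. These combine exactly into $(1+\sqrt\kappa)^2/\sqrt\kappa$, which is why the smallness threshold has that form. A secondary point is checking that the local-coordinate regime ($\|w_j\|<r^*$, and $d(\x_j,\x^*)<r$ so that $\zeta_j,\rho_j$ are defined) persists at every step. This follows from the induction bound $\|w_j\|\le C_hr_0\le r^*$, assuming $r^*$ is taken no larger than $r$ in Lemma \ref{lemma: iteration in tangential space}, since $\|\zeta_j\|=d(\x_j,\x^*)$ for the exponential map.
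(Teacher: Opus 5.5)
Your proof is correct and follows essentially the same route as the paper. You unroll the perturbed linear recursion of Lemma \ref{lemma: iteration in tangential space}, bound the powers of $A$ by $C_0 q^{j}$ via Lemma \ref{L2 norm of the iteration matrix}, and close a strong induction on $\|w_n\|\le C_h q^n r_0$ with the same geometric-series bookkeeping that yields the factor $(1+\sqrt{\kappa})^2/\sqrt{\kappa}$. Your explicit handling of $\rho_0=0$, of the strict inequality $\|w_j\|<r^*$, and of keeping $\x_j$ inside the domain of the local coordinates spells out points the paper uses only implicitly.
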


\begin{proof}
    We prove \eqref{eq: convergence rate momentum} by induction. For $n=0$, we can directly get \eqref{eq: convergence rate momentum} by $C_h\geq 1$. Suppose \eqref{eq: convergence rate momentum} holds with $1\leq n\leq k$, and thus $\left\|\begin{pmatrix}
\zeta_{n}\\
\rho_{n}
\end{pmatrix}\right\|_2\leq C_hr_0\leq r^*, 1\leq n\leq k$. For $n=k+1$, from Lemma \ref{lemma: iteration in tangential space}, we have that 
$$
\begin{pmatrix}
\zeta_{k+1}\\
\rho_{k+1}
\end{pmatrix}
=
A^{k+1}\begin{pmatrix}
\zeta_0\\
\rho_0
\end{pmatrix}
+\sum_{i=0}^k A^{k-i} res_i.
$$
By Lemma \ref{L2 norm of the iteration matrix}, we have that 
$$
\begin{aligned}
\left\|\begin{pmatrix}
\zeta_{k+1}\\
\rho_{k+1}
\end{pmatrix}\right\|_2 &\leq C_0\left(1-\frac{1}{1+\sqrt{\kappa}}\right)^{k+1}\left\|\begin{pmatrix}
\zeta_{0}\\
\rho_{0}
\end{pmatrix}\right\|_2+C_0 C\sum_{i=0}^k \left(1-\frac{1}{1+\sqrt{\kappa}}\right)^{k-i} \left\|\begin{pmatrix}
\zeta_{i}\\
\rho_{i}
\end{pmatrix}\right\|_2^2\\
&\leq \left[  C_0\left(1-\frac{1}{1+\sqrt{\kappa}}\right)^{k+1}+C_0C_h^2Cr_0\sum_{i=0}^k \left(1-\frac{1}{1+\sqrt{\kappa}}\right)^{k+i}\right]\left\|\begin{pmatrix}
\zeta_{0}\\
\rho_{0}
\end{pmatrix}\right\|_2\\
& \leq \left(  C_0+\frac{C_0C_h^2Cr_0}{1-\frac{1}{1+\sqrt{\kappa}}}\sum_{i=0}^k \left(1-\frac{1}{1+\sqrt{\kappa}}\right)^{i}\right)\left(1-\frac{1}{1+\sqrt{\kappa}}\right)^{k+1}\left\|\begin{pmatrix}
\zeta_{0}\\
\rho_{0}
\end{pmatrix}\right\|_2\\
& \leq \left(  C_0+\frac{C_0C_h^2Cr_0}{1-\frac{1}{1+\sqrt{\kappa}}}\sum_{i=0}^{\infty} \left(1-\frac{1}{1+\sqrt{\kappa}}\right)^{i}\right)\left(1-\frac{1}{1+\sqrt{\kappa}}\right)^{k+1}\left\|\begin{pmatrix}
\zeta_{0}\\
\rho_{0}
\end{pmatrix}\right\|_2\\
&= \left(  C_0+\frac{C_0C_h^2Cr_0(1+\sqrt{\kappa})^2}{\sqrt{\kappa}}\right)\left(1-\frac{1}{1+\sqrt{\kappa}}\right)^{k+1}\left\|\begin{pmatrix}
\zeta_{0}\\
\rho_{0}
\end{pmatrix}\right\|_2\\
&\leq C_h\left(1-\frac{1}{1+\sqrt{\kappa}}\right)^{k+1}\left\|\begin{pmatrix}
\zeta_{0}\\
\rho_{0}
\end{pmatrix}\right\|_2,
\end{aligned}
$$
which completes the proof.
\end{proof}
\begin{remark}
    Compared with the linear convergence rate $1-\frac{1}{2\kappa}$ in Proposition \ref{convergence analysis normal case} without momentum acceleration, the use of momentum improves the convergence rate to $1-\frac{1}{1+\sqrt{\kappa}}$. This improvement is particularly significant when the problem is ill-conditioned, that is, when the condition number $\kappa$ is very large, which is common in practice.
\end{remark}

\section{Local convergence without exact-eigenvector assumption}\label{sec: one step v update}
In Section \ref{sec: CSD with exact eigenvector}, as well as in the analysis of SD in \cite{luo2022sinum,su2025improved,luo2025accelerated}, the convergence analysis is carried out under the assumption that the unstable eigendirections are computed exactly at each iterate. The latter could be associated with a high computational cost that is often unnecessary in practice. Motivated by this, we study the local convergence of the explicit Euler discretizations of SD, CSD, and MCSD in this section, and show that a one-step eigenvector update is sufficient to guarantee the local convergence and does not affect the tail convergence rate. We first consider the Euclidean setting (SD) in Subsection \ref{sec: one step inner in the Euclidean case}, and then extend the convergence results to the manifold setting (CSD and MCSD) in Subsection \ref{sec: one inner step on the manifold}.
\subsection{Local convergence of the Euler discretizations of SD}\label{sec: one step inner in the Euclidean case}
The SD for searching an index-$k$ saddle point $\x^*$ in the Euclidean case \cite{2019High} is defined by 
$$
    \left\{
    \begin{aligned}
		\dot{\x}&=- \left(\I-2\sum_{i=1}^k {\vvec}_i{\vvec}_i^\top\right)\nabla f(\x), \\
		\dot{\vvec}_i&=-   \left(\I-{\vvec}_i{\vvec}_i^\top-\sum_{j=1}^{i-1}2{\vvec}_j{\vvec}_j^\top\right)\nabla^2 f(\x) \vvec_i,\ i=1,\cdots,k ,\\
    \end{aligned}
    \right.
$$
where $1\leq k\leq d-1$ and $\I$ is the identity operator, which can also be written as 
$$
    \left\{
    \begin{aligned}
		\dot{\x}&=- \left(\I-2VV^\top\right)\nabla f(\x), \\
		\dot{V}&=-   \left(\I-VV^\top\right)\nabla^2 f(\x) V.\\
    \end{aligned}
    \right.
$$
where $V=[\vvec_1,\cdots,\vvec_k]$. By defining $R:=\I-2VV^\top$, we have a more compact form \cite{su2025improved}
\begin{equation}\label{eq: compact saddle dynamics}
    \left\{
    \begin{aligned}
		\dot{\x}&=-R\nabla f(\x), \\
		\dot{R}&=\nabla^2f(\x)-R\nabla^2f(\x)R.\\
    \end{aligned}
    \right.
\end{equation}
The first-order explicit Euler scheme of \eqref{eq: compact saddle dynamics} is defined as 
\begin{equation}\label{eq: compact saddle algorithm}
\begin{aligned}
    &\x_{n+1}=\x_n-\Delta t R_n\nabla  f(\x_n),\\
    & \bar{R}_{n+1}=R_n+\Delta t (\nabla^2f(\x_n)-R_n\nabla^2f(\x_n)R_n) \\
    & \bar{R}_{n+1}= V \Sigma V^\top, R_{n+1}=V\text{diag}(\underbrace{-1,\cdots,-1}_k,1,\cdots,1)V^\top, 
\end{aligned}
\end{equation}
where $\Delta t$ is the step size, and the final orthogonal step is used to ensure that $R$ is a Householder transformation. Specifically, the Householder transformation $R$ resides within the manifold
\begin{equation}
    \mathcal{R}_k=\{R\in \text{Sym}_d: R^2=\I, \text{dim ker}(R+\I)=k\}
\end{equation}
which represents an immersed submanifold of the Grassmannian Grk$(\mathbb{R}_ n)$ within the space of symmetric matrices Sym$_n$ equipped with the Frobenius inner product $\langle \cdot,\cdot \rangle_F$. The tangent space is expressed as $$
T_R\mathcal{R}_k=\{A\in \text{sym}_d,AR+RA=0\},
$$
and the projection operator is defined as 
$$\Pvec_{T_R} A=\frac{1}{2}(A-RAR).$$

Analogous to Assumption \ref{assumption: Hessian}, we introduce the following strict saddle point assumption in the Euclidean setting.
\begin{assumption}\label{assumption: Hessian one inner step}
There exist $\delta>0, L\geq\mu>0$, if $\x\in U_\delta=\{\x\in \mathbb{R}^d, \|\x-\x^*\|_2\leq \delta\}$, then the eigenvalues of $\nabla^2 f(\x)$ satisfy
$$
    -L\leq\lambda_1\leq \cdots\leq\lambda_k\leq -\mu < 0< \mu\leq\lambda_{k+1} \leq \cdots \leq \lambda_d \leq L.
$$
\end{assumption}
We first demonstrate that the orthogonal step in \eqref{eq: compact saddle algorithm} is actually a nearest point mapping from the Euclidean space to $\mathcal{R}_k$, which is specified in the following lemma.

\begin{lemma}\label{lemma: nearest point projection}
Suppose that Assumption \ref{assumption: Hessian one inner step} holds. Define the nearest point projection
\begin{equation}\label{eq: nearest point mapping}
\operatorname{Ret}(\bar R)
:= \arg\min_{R\in \mathcal{R}_k}\|R-\bar R\|_F^2.
\end{equation}
If $\Delta t < \frac{1}{2L}$, we have that $R_{n+1} = \operatorname{Ret}(\bar R_{n+1})$, and $R_{n+1}$ is the unique
minimizer of the above problem, where $R_n$, $\bar R_{n+1}$, and $R_{n+1}$ are defined in \eqref{eq: compact saddle algorithm}. Consequently, the third step of
\eqref{eq: compact saddle algorithm} defines a retraction, which satisfies
$$D_R\operatorname{Ret}(R)|_{R=\bar R}[E]: = \frac{d}{dt}\operatorname{Ret}(\bar R+tE)\big|_{t=0}=\Pvec_{T_{\bar R}}E$$
for all $\bar R \in \mathcal{R}_k, E\in \mathrm{Sym}_d$.
\end{lemma}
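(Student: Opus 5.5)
The plan has three parts: solve the nearest-point problem on $\mathcal{R}_k$ in closed form, show that $\bar R_{n+1}$ has a clean spectral gap so the minimizer is unique and coincides with the orthogonal step, and then differentiate the resulting spectral projector.

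\emph{Closed form of the nearest point.} Any $R\in\mathcal{R}_k$ can be written $R=\I-2P$, where $P$ is an orthogonal projector of rank $k$. Since $\|R\|_F^2=d$ is constant on $\mathcal{R}_k$,
$$\|R-\bar R\|_F^2=d+\|\bar R\|_F^2-2\,\tr(\bar R)+4\,\tr(P\bar R).$$
Minimizing this over $R$ is therefore the same as minimizing $\tr(P\bar R)$ over rank-$k$ orthogonal projectors $P$. By Ky Fan's minimum principle, this minimum equals the sum of the $k$ smallest eigenvalues of the symmetric matrix $\bar R$. It is attained exactly when $P$ projects onto an invariant subspace spanned by eigenvectors of those $k$ eigenvalues. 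The minimizer is unique precisely when there is a strict gap $\sigma_k<\sigma_{k+1}$ in the ordered spectrum of $\bar R$. In that case the minimizer is $V\mathrm{diag}(-1,\dots,-1,1,\dots,1)V^\top$, which is exactly the third step of \eqref{eq: compact saddle algorithm}.

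\emph{Spectral gap of $\bar R_{n+1}$.} Write $H=\nabla^2 f(\x_n)$, so $\|H\|\le L$ by Assumption \ref{assumption: Hessian one inner step}. Since $R_n^2=\I$, the update is $\bar R_{n+1}=R_n+\Delta t(H-R_nHR_n)$. The perturbation $E=\Delta t(H-R_nHR_n)$ satisfies $\|E\|_2\le 2\Delta t L<1$ because $\Delta t<\frac{1}{2L}$. The eigenvalues of $R_n$ are $-1$ with multiplicity $k$ and $+1$ with multiplicity $d-k$. By Weyl's inequality, the eigenvalues of $\bar R_{n+1}$ therefore split into $k$ values in $(-2,0)$ and $d-k$ values in $(0,2)$. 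The strict gap between the two groups gives uniqueness, and the claim $R_{n+1}=\operatorname{Ret}(\bar R_{n+1})$ follows. (In fact $E$ anticommutes with $R_n$, since $E R_n+R_n E=0$; one could use this for sharper bounds, but it is not needed.)

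\emph{Smoothness and the derivative.} I will express $\operatorname{Ret}$ through the Riesz projector, exactly as in Lemma \ref{lem:Riesz-proj-smooth}:
$$\operatorname{Ret}(\bar R)=\I+\frac{1}{\pi i}\oint_\Gamma(\bar R-z\I)^{-1}\,dz,$$
where $\Gamma$ encloses $[-2,0)$ but not the positive spectrum. This formula is smooth on the open set of symmetric matrices whose spectrum avoids $0$ and splits $k$ versus $d-k$. Now take $\bar R\in\mathcal{R}_k$ and $E\in\mathrm{Sym}_d$. Differentiating the resolvent gives
$$D\operatorname{Ret}[E]=-\frac{1}{\pi i}\oint_\Gamma(\bar R-z)^{-1}E(\bar R-z)^{-1}dz.$$
To evaluate this, decompose $E$ into blocks with respect to the eigenspaces $P_\pm$ of $\bar R$. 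On the diagonal blocks $P_\pm EP_\pm$ the integrand has a double pole, so the residue vanishes. On the off-diagonal blocks the integrand is $\frac{1}{(-1-z)(1-z)}$; its residue at $z=-1$ contributes coefficient $1$. Hence
$$D\operatorname{Ret}[E]=P_-EP_++P_+EP_-=\tfrac12(E-\bar RE\bar R)=\Pvec_{T_{\bar R}}E.$$
Finally, $\operatorname{Ret}(\bar R)=\bar R$ holds on $\mathcal{R}_k$, so together with the derivative formula this shows $\operatorname{Ret}$ is a retraction.

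I expect the main obstacle to be the careful uniqueness and first-order argument, namely Ky Fan combined with the gap. The residue bookkeeping in the derivative also needs care to get the factor $\tfrac12$ right.
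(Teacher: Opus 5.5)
Your proposal is correct. For the identification and uniqueness of the nearest point you follow the paper closely. The paper diagonalizes $\bar R_{n+1}=V\Lambda V^\top$, reduces to maximizing $\tr(\tilde R\Lambda)$, and applies von Neumann's trace inequality. You instead write $R=\I-2P$ and apply Ky Fan's principle to rank-$k$ projectors, which amounts to the same thing here. Both arguments then use Weyl's inequality against the eigenvalues $\pm1$ of $R_n$ to get $\Lambda_k<0<\Lambda_{k+1}$. Both also tacitly need $\x_n\in U_\delta$ so that $\|\nabla^2 f(\x_n)\|_2\le L$.

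You genuinely differ from the paper on the differential. The paper only cites the general theory of nearest-point projections onto smooth embedded submanifolds \cite{leobacher2021existence,dudek1994nonlinear}, and leaves implicit that $\frac12(E-\bar RE\bar R)$ is the Frobenius-orthogonal projection onto $T_{\bar R}\mathcal{R}_k$. You instead represent $\operatorname{Ret}$ by the Riesz integral of Lemma~\ref{lem:Riesz-proj-smooth} and differentiate the resolvent block by block. Your bookkeeping is right: the diagonal blocks integrate to zero, and the off-diagonal coefficient is $-\frac{1}{\pi i}\cdot 2\pi i\cdot(-\frac12)=1$. This gives smoothness of $\operatorname{Ret}$ near $\mathcal{R}_k$ and the formula $P_-EP_++P_+EP_-=\Pvec_{T_{\bar R}}E$ with no external input. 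The cited route is more general but less explicit.

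Two small remarks.
\begin{enumerate}
\item The contour should be fixed locally, e.g.\ a circle of radius $\frac12$ about $-1$ for matrices near $\mathcal{R}_k$. No single curve ``enclosing $[-2,0)$'' stays uniformly away from the spectrum on the open set you describe.
\item Your parenthetical anticommutation observation is stronger than you indicate. From $R_nE+ER_n=0$ you get $(R_n+tE)^2=\I+t^2E^2$, so no eigenvalue of $R_n+tE$ ever enters $(-1,1)$ for $t\in[0,\Delta t]$. Hence the $k$ versus $d-k$ split with a strict gap holds for every $\Delta t>0$, without any bound on $\nabla^2 f(\x_n)$.
\end{enumerate}
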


\begin{proof}
Note that $\bar R_{n+1}\in \mathrm{Sym}_d$ because both $R_n$ and
$\nabla^2 f(\x_n) - R_n \nabla^2 f(\x_n) R_n$ are symmetric matrices. Let
$\bar R_{n+1} = V \Lambda V^\top$ be its eigendecomposition and set
$\tilde R = V^\top R V$. Then
\[
\|\bar R_{n+1} - R\|_F^2
= \|V^\top(\bar R_{n+1} - R)V\|_F^2
= \|\tilde R - \Lambda\|_F^2,
\]
and hence,
\[
\min_{R\in \mathcal{R}_k}\|R-\bar R_{n+1}\|_F^2
\;\Longleftrightarrow\;
\min_{\tilde R\in \mathcal{R}_k}\|\tilde R - \Lambda\|_F^2
\;\Longleftrightarrow\;
\max_{\tilde R\in \mathcal{R}_k} \operatorname{tr}(\tilde R \Lambda).
\]
The last equivalence follows from the facts that $\|\tilde R\|_F^2 = d$ ($\tilde R\in\mathcal{R}_k$) and that $\|\Lambda\|_F^2$ is constant.

By von Neumann's trace inequality, for any $\tilde R \in \mathcal{R}_k$,
\begin{equation}\label{Von Neumann}
\operatorname{tr}(\tilde R \Lambda)
\;\leq\;
\sum_{i=k+1}^d \Lambda_i \;-\; \sum_{i=1}^k \Lambda_i
= \operatorname{tr}\!\left(
\operatorname{diag}(\underbrace{-1,\dots,-1}_k,1,\dots,1)\,\Lambda
\right),
\end{equation}
and equality holds if and only if $\tilde R$ is diagonal with exactly $k$
entries equal to $-1$ and $d-k$ entries equal to $1$ on the diagonal, aligned
with the ordering of the eigenvalues in $\Lambda$.

If $\Delta t < \frac{1}{2L}$, by Weyl inequality, we have
$$
|\Lambda_i - \lambda_{n,i}|
\leq
\Delta t\,\bigl\|\nabla^2 f(\x_n) - R_n \nabla^2 f(\x_n) R_n\bigr\|_2
< 1, \forall i,
$$
where $\lambda_{n,i}$ is the $i$-th smallest eigenvalue of $R_n$. Hence,
$$
\Lambda_{k+1} - \Lambda_k>
\lambda_{n,k+1} - \lambda_{n,k} - 2
= 0,
$$
which means there is a positive eigengap between the $k$-th and $(k+1)$-st eigenvalues
of $\Lambda$. Therefore, by the condition of equality in
\eqref{Von Neumann}, the matrix
\[
\operatorname{diag}(\underbrace{-1,\dots,-1}_k,1,\dots,1)
\]
is the unique maximizer of $\operatorname{tr}(\tilde R \Lambda)$, or equivalently,
the unique minimizer of $$\min_{\tilde R\in \mathcal{R}_k}\|\tilde R - \Lambda\|_F^2,$$
which establishes the uniqueness of $R_{n+1} = \operatorname{Ret}(\bar R_{n+1})$.

Finally, the fact that the third step of \eqref{eq: compact saddle algorithm}
defines a retraction with differential $D_R\operatorname{Ret}(R)|_{R=\bar R}
= \Pvec_{T_{\bar R}}$ for $\bar R \in \mathcal{R}_k$ follows from the standard
properties of the nearest point projection \cite{leobacher2021existence,dudek1994nonlinear}.
\end{proof}

\begin{proposition}[Local convergence of the Euler discretizations of SD]\label{Proposition: Local convergence of the discrete saddle dynamics in the Euclidean space}
Suppose that Assumption~\ref{assumption: Hessian one inner step} holds. If
$\Delta t < \frac{1}{2L}$, the numerical scheme given by
\eqref{eq: compact saddle algorithm} is locally contractive (and hence
asymptotically linearly stable) at $(\x^*, R^*)$.
\end{proposition}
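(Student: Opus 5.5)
We will linearize the map $\Phi:(\x,R)\mapsto(\x_{n+1},R_{n+1})$ defined by \eqref{eq: compact saddle algorithm} at the fixed point $(\x^*,R^*)$. Here $R^*=\I-2V^*{V^*}^\top$ is built from the unstable eigenvectors of $H^*:=\nabla^2 f(\x^*)$. We then show that the spectral radius of the Jacobian is strictly less than one. First we check that $(\x^*,R^*)$ is a fixed point. Since $\nabla f(\x^*)=0$, the $\x$-update is stationary. Since $R^*$ commutes with $H^*$ and $(R^*)^2=\I$, we get $H^*-R^*H^*R^*=0$, so $\bar R_{n+1}=R^*$, and by Lemma \ref{lemma: nearest point projection} $\operatorname{Ret}(R^*)=R^*$. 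Smoothness of $\Phi$ near $(\x^*,R^*)$ also follows from Lemma \ref{lemma: nearest point projection}: for $\Delta t<\frac1{2L}$ the eigengap is preserved, so the nearest-point projection is smooth, for instance through a Riesz-projector representation as in Lemma \ref{lem:Riesz-proj-smooth}. Local contractivity then follows from a first-order Taylor expansion with quadratic remainder, in the same way as in Proposition \ref{convergence analysis normal case} and Proposition \ref{linear convergence of the discretized CHiSD with momentum}.

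Next we compute the Jacobian, for perturbations $\xi$ of $\x$ and $E\in T_{R^*}\mathcal{R}_k$ of $R$. The $\x$-component is $\xi\mapsto(\I-\Delta t R^*H^*)\xi$. There is no $E$-dependence at first order, because $E\nabla f(\x^*)=0$. For the $R$-component, differentiating $\bar R$ gives
\[
E+\Delta t\big(D\nabla^2f(\x^*)[\xi]-R^*D\nabla^2f(\x^*)[\xi]R^*\big)-\Delta t\,(EH^*R^*+R^*H^*E),
\]
and we then apply $\Pvec_{T_{R^*}}$ using the derivative of Ret from Lemma \ref{lemma: nearest point projection}. The Jacobian is therefore block lower triangular, with diagonal blocks $\I-\Delta t R^*H^*$ and $\mathcal{L}:E\mapsto \Pvec_{T_{R^*}}\big(E-\Delta t(EH^*R^*+R^*H^*E)\big)$. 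Its spectrum is the union of the spectra of these two blocks.

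For the first block, $R^*H^*$ is symmetric with spectrum $\{|\lambda_i^*|\}\subset[\mu,L]$, so the eigenvalues $1-\Delta t|\lambda_i^*|$ lie in $[1-\Delta tL,\,1-\Delta t\mu]\subset(0,1)$. For the second block, we diagonalize in the eigenbasis $\{\vvec_i^*\}$ of $H^*$, where $R^*=\mathrm{diag}(s_i)$ with $s_i=-1$ for $i\le k$ and $s_i=1$ otherwise. Tangent vectors $E$ satisfy $E_{ij}=0$ unless $s_i\neq s_j$, so they are supported on the off-diagonal blocks with $i\le k<j$. On the entry $E_{ij}$ the map acts as multiplication by $1-\Delta t(\lambda_js_j+s_i\lambda_i)=1-\Delta t(|\lambda_j|+|\lambda_i|)$. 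This expression is already tangent, so the projection acts as the identity. Hence these eigenvalues lie in $[1-2\Delta tL,\,1-2\Delta t\mu]$, which is contained in $(0,1)$ because $\Delta t<\frac1{2L}$. This is exactly where the step-size restriction enters. The Jacobian is thus diagonalizable in a suitable basis, up to the triangular coupling, with spectral radius at most $1-\Delta t\mu<1$.

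The main obstacle is the careful treatment of the manifold $\mathcal{R}_k$. We must confirm that Ret is differentiable at points $\bar R$ near but not on $\mathcal{R}_k$. We must also control the cross term coming from $D\nabla^2 f$, which is harmless because of the triangular structure. Finally, we must pass from a spectral-radius bound to contraction in a norm. For the last point we plan to use a weighted norm $\|\xi\|+\epsilon\|E\|_F$ with $\epsilon$ small, which absorbs the off-diagonal block. The quadratic remainder then gives a contraction factor of $1-\Delta t\mu/2$ in a small neighborhood, exactly as in the proof of Proposition \ref{convergence analysis normal case}.
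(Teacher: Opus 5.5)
Your proposal is correct and follows essentially the same route as the paper: a block lower-triangular Jacobian with diagonal blocks $\I-\Delta t R^*H^*$ and $E\mapsto\Pvec_{T_{R^*}}\bigl(E-\Delta t(EH^*R^*+R^*H^*E)\bigr)$, with the latter diagonalized on $\vvec_i^*{\vvec_j^*}^\top+\vvec_j^*{\vvec_i^*}^\top$, $i\le k<j$, giving eigenvalues $1-\Delta t(|\lambda_i^*|+|\lambda_j^*|)$. The differences are cosmetic. You linearize only along $T_{R^*}\mathcal{R}_k$, whereas the paper works on all of $\mathrm{Sym}_d$ and also checks that normal directions contribute zero eigenvalues. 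Your weighted-norm argument makes explicit the step from spectral radius to contraction, which the paper leaves implicit.
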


\begin{proof}
It is sufficient to show that the spectral radius of the Jacobian of the
iteration map at $(\x^*,R^*)$ is strictly less than $1$. Denote the Jacobian by
\[
J^* \;=\;
\begin{pmatrix}
    J_{\x \x}^* & J_{\x R}^*\\
    J_{R \x}^* & J_{RR}^*
\end{pmatrix},
\]
where
\[
J_{\x \x}^* = \I - \Delta t\,R^* \nabla^2 f(\x^*), 
J_{\x R}^*[E]=-\Delta tE \nabla f(\x^*)= 0.
\]
Then, $J^*$ is a block lower triangular matrix, and it suffices to prove that the spectral radii of $J_{\x\x}^*$ and $J_{RR}^*$ are strictly less than 1.

For the $\x$-block, if $\Delta t<\frac{1}{2L}$, $\rho(J_{\x\x}^*)\leq 1-\Delta t \mu<1$.

For the $R$-block, we have that
\[
\begin{aligned}
J_{RR}^*
&= D_R\Bigl(\operatorname{Ret}\bigl(R + \Delta t\,(\nabla^2 f(\x) 
     - R\nabla^2 f(\x)R)\bigr)\Bigr)\Big|_{\x = \x^*,\, R = R^*} \\
&= D_R \operatorname{Ret}(R)|_{R=R^*} \circ D_R\Bigl(R + \Delta t\bigl(\nabla^2 f(\x) 
    - R\nabla^2 f(\x)R\bigr)\Bigr)\Big|_{\x = \x^*,\,R = R^*},
\end{aligned}
\]
where $\operatorname{Ret}$ is defined in \eqref{eq: nearest point mapping}. By Lemma~\ref{lemma: nearest point projection}, we have that $D_R \operatorname{Ret}(R)|_{R=R^*} = \Pvec_{T_{R^*}}$, which is the orthogonal projection onto the tangent space $T_{R^*}\mathcal{R}_k$.
A direct differentiation of $R + \Delta t(\nabla^2 f(\x) - R\nabla^2 f(\x)R)$
with respect to $R$ yields
$$
\begin{aligned}
&D_R\Bigl(R + \Delta t(\nabla^2 f(\x) - R\nabla^2 f(\x)R)\Bigr)\big|_{\x=\x^*,R=R^*}[E]\\
&= E - \Delta t\bigl(E\nabla^2 f(\x^*)R^* + R^*\nabla^2 f(\x^*)E\bigr),
\end{aligned}
$$
so that
$$
J_{RR}^*[E]
= \Pvec_{T_{R^*}}\bigl(E - \Delta t(E\nabla^2 f(\x^*)R^* 
    + R^*\nabla^2 f(\x^*)E)\bigr).
$$
Recall that for the manifold
$$
\mathcal{R}_k := \{R \in \mathrm{Sym}_d : R^2 = I,\ \mathrm{tr}(R)=d-2k\},
$$
the tangent space and normal space at $R^*$ are
$$
T_{R^*}\mathcal{R}_k
= \{E \in \mathrm{Sym}_d : ER^* + R^*E = 0\}, N_{R^*}\mathcal{R}_k
= \{E \in \mathrm{Sym}_d : E - R^*ER^* = 0\}.
$$
We describe the eigenstructure of $J_{RR}^*$ on these two subspaces:

\noindent\textbf{(i) Normal directions.}
Let $E \in N_{R^*}\mathcal{R}_k$, i.e., $E = R^*ER^*$. Then $\Pvec_{T_{R^*}}[E] = 0$, and we claim that $\Pvec_{T_{R^*}}(E\nabla^2 f(\x^*)R^*) = 0, \Pvec_{T_{R^*}}(R^*\nabla^2 f(\x^*)E) = 0
$ also hold. Indeed, 
$$
\begin{aligned}
\Pvec_{T_{R^*}} (E\nabla^2f(\x^*)R^*)&=\frac{1}{2}(E\nabla^2f(\x^*)R^*-R^*E\nabla^2 f(\x^*)R^*R^*)\\
&=\frac{1}{2}(R^*ER^*\nabla^2f(\x^*)R^*-R^*E\nabla^2 f(\x^*))=0.
\end{aligned}
$$ 
A similar argument
gives $\Pvec_{T_{R^*}}(R^*\nabla^2 f(\x^*)E) = 0$. Therefore
\[
J_{RR}^*[E] = 0, \forall E \in N_{R^*}\mathcal{R}_k,
\]
which means $J_{RR}^*$ has $\dim( N_{R^*}\mathcal{R}_k)$ zero eigenvalues corresponding to normal directions. 

\noindent\textbf{(ii) Tangent directions.}
Now let $E \in T_{R^*}\mathcal{R}_k$, i.e., $ER^* + R^*E = 0$. Using the spectral decompositions
$$
R^* = V S V^\top, \nabla^2 f(\x^*) = V \Lambda V^\top,
$$
where $S = \operatorname{diag}(s_1,\dots,s_d)$ with
$s_i=-1, i\leq k, s_i=1, i>k$ and $\Lambda = \operatorname{diag}(\lambda_1^*,\dots,\lambda_d^*)$.
We define 
$$
E_{ij} := \vvec_i^*{\vvec_j^*}^\top + \vvec_j^*{\vvec_i^*}^\top, i\leq k, j>k.
$$
Then each $E_{ij}$ is symmetric and satisfies $E_{ij}R^* + R^*E_{ij}=0$, and thus,
$E_{ij}\in T_{R^*}\mathcal{R}_k$. Furthermore, these $E_{ij}$ span the tangent
space (recall that the dimension of the tangential space is $k(d-k)$).

A straightforward computation yields
\[
E_{ij}\nabla^2 f(\x^*)R^* + R^*\nabla^2 f(\x^*)E_{ij}
= (\lambda_i^* s_i + \lambda_j^* s_j) E_{ij}.
\]
Since $E_{ij}\in T_{R^*}$, the projection $\Pvec_{T_{R^*}}$ acts as identity on $E_{ij}$, which leads to
\[
J_{RR}^*[E_{ij}]
= (1-\Delta t (\lambda_i^* s_i+\lambda_j^* s_j))E_{ij} = (1-\Delta t (-\lambda_i^*+\lambda_j^*))E_{ij}, i\leq k, j>k.
\]
At $(\x^*,R^*)$,  $-\lambda_i^* + \lambda_j^*$ lies in
$[2\mu,2L]$ by Assumption \ref{assumption: Hessian one inner step}. If $\Delta t < \frac{1}{2L}$, we have that
\[
\bigl|1 - \Delta t(-\lambda_i^* + \lambda_j^*)\bigr|
< 1, i\leq k, j>k.
\]
Therefore, all eigenvalues of $J_{RR}^*$ corresponding to tangent directions
lie strictly inside $(-1,1)$.

Then, we conclude that all eigenvalues $J^*$ lie in $(-1,1)$ provided $\Delta t < \frac{1}{2L}$, which completes the proof.
\end{proof}

\begin{remark}
In Propositions \ref{convergence analysis normal case} and
\ref{convergence with inexact}, we show that the inexact unstable eigenvectors
introduce an error term $\theta$ that slows down the convergence rate
from $1 - \mu/(2L)$ to $1 - \mu/(2L) + \theta$. One may worry that, since the discrete $\vvec$-dynamics update $\vvec$ by only one step at each $\x_n$, the resulting $\vvec_n$ is inherently an inexact eigenvector, thus generating a nonzero $\theta$ and slowing down the convergence. However, this does not affect the tail convergence rate. As $n \to \infty$, the iterates $\{\vvec_n^i\}_{i=1}^d$ converge to the unstable eigenvectors at the target saddle point $\x^*$, which implies $\theta \to 0$. Consequently, the convergence rate asymptotically coincides with that of the exact eigenvector case. This also explains why we do not introduce momentum acceleration for $\vvec$, since it does not improve the asymptotic convergence rate of $\x$ as $\theta \rightarrow 0$ regardless of whether momentum is added to $\vvec$-dynamics.
\end{remark}

\subsection{Local convergences of the Euler discretizations of CSD and MCSD}\label{sec: one inner step on the manifold}
In this subsection, we extend the convergence result for the Euler discretization of the SD to that of the CSD and MCSD. 

The Euler discretizations of CSD are defined as
\begin{equation}\label{eq: one inner step manifold}
\begin{cases}\x_{n+1}=\mathrm{Ret}_{\x_n}\left(-\Delta t  R_n\gradient  f(\x_n)\right),\\
     \bar{R}_{n+1}=R_n+\Delta t (\hessian f(\x_n)-R_n \hessian f(\x_n) R_n),\\
    \hat{R}_{n+1}=\mathrm{Orth}_{\x_n}(\bar{R}_{n+1}),\\
    R_{n+1}=P_{\x_n\to \x_{n+1}}\circ \hat{R}_{n+1}\circ P_{\x_{n+1}\to \x_n},
\end{cases}
\end{equation}
where $\mathrm{Orth}_{\x_n}(\bar{R}_{n+1})$ is also obtained by $$\bar{R}_{n+1} = V \Sigma V^\top, 
\hat{R}_{n+1} = V \,\mathrm{diag}(\underbrace{-1,\dots,-1}_{k},1,\dots,1)\,V^\top.$$
The operator transport step $R_{n+1}=P_{\x_n\to \x_{n+1}}\circ \hat{R}_{n+1}\circ P_{\x_{n+1}\to \x_n}$ is used to ensure that $R_n$ is a linear operator defined on $T_{\x_n}\M$ with any $n$.

Since $\x_{n+1}$ depends on $(\x_n,R_n)$, we write $\x_{n+1} = F(\x_n,R_n)$.
Similarly, $R_{n+1}$ depends on $\x_n$, $\x_{n+1}=F(\x_n,R_n)$, and $R_n$, and
thus can also be viewed as a function of $(\x_n,R_n)$, denoted by
$R_{n+1} = G(\x_n,R_n)$. Therefore, the iteration can be expressed as
$$
(\x_{n+1},R_{n+1}) = (F(\x_n,R_n),\,G(\x_n,R_n)).
$$
Similarly, the explicit Euler scheme of MCSD is defined as
\begin{equation}\label{eq: Euler scheme of the sd momentum}
\begin{cases}
    \bar{\rvec}_n=-\Delta t R_n\gradient f(\x_n)+\gamma\rvec_n,\\
    \x_{n+1}=\mathrm{Ret}_{ \x_n}(\bar{\rvec}_n),\\
    \rvec_{n+1}=P_{\x_n\rightarrow\x_{n+1}}   \bar{\rvec}_n,\\
    \bar{R}_{n+1}=R_n+\Delta t (\hessian f(\x_n)-R_n \hessian f(\x_n) R_n),\\
    \hat{R}_{n+1}=\mathrm{Orth}_{\x_n}(\bar{R}_{n+1}),\\
    R_{n+1}=P_{\x_n\to \x_{n+1}}\circ \hat{R}_{n+1}\circ P_{\x_{n+1}\to \x_n}.
\end{cases}
\end{equation}

\begin{proposition}[Local convergence of the Euler discretizations of CSD and MCSD]
Suppose that Assumption \ref{assumption: Hessian} holds. If
$\Delta t < \frac{1}{2L}$, the Euler discretizations of CSD given by
\eqref{eq: one inner step manifold} are locally contractive (and hence
asymptotically linearly stable) at $(\x^*, R^*)$, the Euler discretizations of MCSD given by \eqref{eq: Euler scheme of the sd momentum} are locally contractive at $(\x^*, \mathbf{0}, R^*)$.
\end{proposition}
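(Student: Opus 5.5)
We will mirror the argument of Proposition \ref{Proposition: Local convergence of the discrete saddle dynamics in the Euclidean space}, working in local coordinates around $\x^*$ as in Lemma \ref{lemma: iteration in tangential space}. Using $\zeta=\mathrm{Ret}_{\x^*}^{-1}(\x)\in T_{\x^*}\M$ and the transported operator $\widehat R=P_{\x\to\x^*}\circ R\circ P_{\x^*\to\x}$, which lies in the fixed manifold $\mathcal{R}_k$ of Householder operators on $T_{\x^*}\M$, the iteration \eqref{eq: one inner step manifold} becomes a smooth map $(\zeta_n,\widehat R_n)\mapsto(\zeta_{n+1},\widehat R_{n+1})$ on $T_{\x^*}\M\times\mathcal{R}_k$ with fixed point $(0,R^*)$. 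Lemma \ref{lem:Riesz-proj-smooth} gives smoothness of the Hessian data, and Lemma \ref{local diffeomorphism} gives smoothness of the charts. Local contractivity then follows once we show that the Jacobian at the fixed point has spectral radius strictly less than $1$. For MCSD we add the coordinate $\rho=P_{\x\to\x^*}\rvec$ and use the fixed point $(0,0,R^*)$.

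\textbf{Step 1 (block structure).} Every derivative of the $\x$- and $\rvec$-updates with respect to $R$ carries a factor $\gradient f$, so it vanishes at $\x^*$; the $\zeta$-block is therefore decoupled from the $R$-direction. In the $\zeta$-direction, the argument behind \eqref{eq: first order term} shows that the CSD block equals $\I-\Delta t R^*H^*$. Its spectrum is $\{1-\Delta t|\lambda_i^*|\}\subset[1-\Delta t L,\,1-\Delta t\mu]\subset(0,1)$ when $\Delta t<1/(2L)$. For MCSD, Lemma \ref{lemma: iteration in tangential space} identifies the $(\zeta,\rho)$-block as the operator $A$, and each $2\times2$ block $B_i$ in the proof of Lemma \ref{thm:HB-opt} has eigenvalues of modulus less than $1$. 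Indeed, its characteristic polynomial $\lambda^2-(1+\gamma-\Delta t|\lambda_i|)\lambda+\gamma$ satisfies the Jury conditions $\gamma<1$, $p(1)=\Delta t|\lambda_i|>0$, and $p(-1)=2+2\gamma-\Delta t|\lambda_i|>0$, the last holding because $\Delta t|\lambda_i|<1/2$. This requires $0\le\gamma<1$, which is an implicit assumption on the momentum parameter that we will state explicitly. The Jacobian is thus block lower triangular, and it remains to control the $R$-block.

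\textbf{Step 2 ($R$-block).} Differentiate $\widehat R_{n+1}$ in $\widehat R$ at $(0,R^*)$ with $\zeta=0$ fixed. At $\x_n=\x^*$ we have $\gradient f=0$, so $\x_{n+1}=\x^*$ and both transports are the identity; the update therefore reduces exactly to the Euclidean scheme \eqref{eq: compact saddle algorithm} with $\nabla^2f(\x^*)$ replaced by $H^*$, acting on $\mathrm{Sym}(T_{\x^*}\M)$. Lemma \ref{lemma: nearest point projection} gives $D\,\mathrm{Orth}=\Pvec_{T_{R^*}}$. The computation in parts (i) and (ii) of that proof then carries over verbatim. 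Normal directions give eigenvalue $0$, and the tangent basis $E_{ij}$ with $i\le k<j\le d-m$ gives eigenvalues $1-\Delta t(\lambda_j^*-\lambda_i^*)\in(0,1)$, since $\lambda_j^*-\lambda_i^*\in[2\mu,2L]$ and $\Delta t<1/(2L)$.

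\textbf{Main obstacle.} The delicate point is justifying that the transport step contributes nothing to $J_{RR}^*$ and only enters the off-diagonal block $J_{R\zeta}^*$. Concretely, we must show that $\partial_{\widehat R}$ of $P_{\x_n\to\x_{n+1}}\circ\hat R\circ P_{\x_{n+1}\to\x_n}$ at the fixed point is the identity on operators. This holds because $\x_{n+1}-\x_n=O(\|\gradient f(\x_n)\|)$, whose derivative in $R$ vanishes at $\x^*$, and because of the smoothness of $\mathrm{Orth}$ near $R^*$, which follows from the eigengap argument of Lemma \ref{lemma: nearest point projection}. Once block triangularity is established, the spectrum of the full Jacobian is the union of the spectra of the diagonal blocks, all contained in the open unit disk. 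The standard linearization (Ostrowski) argument then gives local contractivity in an adapted norm, for CSD at $(\x^*,R^*)$ and for MCSD at $(\x^*,\mathbf 0,R^*)$.
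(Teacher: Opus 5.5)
Your proposal is correct and follows essentially the same route as the paper. It obtains block lower-triangularity of the Jacobian from $\gradient f(\x^*)=0$, uses the $\zeta$-block $\I-\Delta t R^*H^*$ (or the operator $A$ with the Jury conditions for MCSD), and reduces the $R$-block to the Euclidean computation because $\x_{n+1}=\x^*$ makes both transports the identity. Two points are useful clarifications rather than a different argument: recasting the iteration on the fixed space $T_{\x^*}\M\times\mathcal{R}_k$ via $\widehat R$, and stating the needed condition $0\le\gamma<1$ explicitly, which the paper only states inside its proof.
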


\begin{proof}
We first consider the local convergence of the discretizations of CSD. Compute the Jacobian of the map $(\x_{n+1},R_{n+1}) = (F(\x_n,R_n),\,G(\x_n,R_n))$ at $(\x^*,R^*)$:
$$
J^*  =
\begin{pmatrix}
    D_\x F(\x^*,R^*) & D_R F(\x^*,R^*)\\
    D_\x G(\x^*,R^*) & D_R G(\x^*,R^*)
\end{pmatrix}.
$$
Since $\mathrm{Ret}_{\x^*}(\mathbf{0})=\x^*$, $D_\x\mathrm{Ret}_{\x^*}(\mathbf{0})=\I$, and $\grad f(\x^*)=0$, it follows that
\[
D_\x F(\x^*,R^*) = \I - \Delta t\, R^* \nabla^2 f(\x^*) : T_{\x^*}\M \to T_{\x^*}\M.
\]
Since $\grad f(\x^*) = 0$, the first-order dependence of $F$ on $R$ vanishes,
and hence, $D_R F(\x^*,R^*) = \mathbf{0}$.

Next, we analyze $D_R G(\x^*,R^*)$. Note that, for the fixed $\x=\x^*$, we have
$F(\x^*,R) = \x^*$ for all $R$, so the parallel
transport of the operator reduces to the identity:
\[
P_{\x^* \to F(\x^*,R)} \circ \hat{R} \circ P_{F(\x^*,R)\to \x^*} = \hat{R}.
\]
Thus, for $\x=\x^*$,
\[
G(\x^*,R) = \mathrm{Orth}_{\x^*}\Bigl(R + \Delta t\bigl(\nabla^2 f(\x^*) 
     - R \nabla^2 f(\x^*) R\bigr)\Bigr).
\]
Define
\[
\hat{F}(R) := R + \Delta t \bigl(\nabla^2 f(\x^*) - R \nabla^2 f(\x^*) R\bigr).
\]
Then
\[
D_R G(\x^*,R^*)[E]
= D_{\hat{F}}\mathrm{Orth}_{\x^*}(\hat{F})|_{\hat F=\hat{F}(R^*)}\bigl[D_R\hat{F}(R)|_{R=R^*}[E]\bigr].
\]
A direct differentiation of $\hat{F}(R)$ yields
\[
D_R\hat{F}(R)|_{R=R^*}[E]
= E - \Delta t\bigl(E H^* R^* + R^* H^* E\bigr),
\]
where $H^* := \hessian f(\x^*)$. By Lemma \ref{lemma: nearest point projection} (identifying $T_{\x^*}\M$ with $\mathbb{R}^{d-m}$), the derivative of
$\mathrm{Orth}_{\x^*}(\hat F)$ at $\hat F=\hat F(R^*)=R^*$ is the orthogonal projection onto the tangent
space of the reflection operator manifold at $R^*$, i.e., $D_{\hat{F}}\mathrm{Orth}_{\x^*}(\hat{F})|_{\hat F=\hat{F}(R^*)}= \Pvec_{T_{R^*}}$.
Hence
$$
D_R G(\x^*,R^*)[E]
= \Pvec_{T_{R^*}}\bigl(E - \Delta t(EH^*R^*+R^*H^*E)\bigr).
$$

Then, following exactly the same steps as in Proposition \ref{Proposition: Local convergence of the discrete saddle dynamics in the Euclidean space}, all eigenvalues of the full Jacobian $J^*$ are given by the eigenvalues of $D_\x F(\x^*, R^*)$ and $D_R G(\x^*, R^*)$, and all eigenvalues lie in $(-1,1)$ provided $\Delta t < \frac{1}{2L}$. Therefore, the scheme \eqref{eq: one inner step manifold} is locally contractive at $(\x^*,R^*)$.

For the Euler scheme of MCSD, Lemma \ref{lemma: iteration in tangential space} implies that the Jacobian of \eqref{eq: Euler scheme of the sd momentum} at $(\x=\x^*,\rvec=\mathbf{0},R=R^*)$ is given by
\[
J_{momentum}^*
=
\begin{pmatrix} 
    \I - \Delta t\, R^* H^* & \gamma \I & 0\\[0.4em]
    -\Delta t\, R^* H^*     & \gamma \I & 0\\[0.4em]
    *                       & *        & D_R G(\x^*,R^*)
\end{pmatrix}.
\]

It suffices to consider the leading $2\times 2$ block, since the spectral radius of $D_R G(\x^*,R^*)$ coincides with that of the CSD. From the proof of Lemma \ref{thm:HB-opt}, the characteristic polynomial of the \(2\times 2\) block
\[
\begin{pmatrix} 
    \I - \Delta t\, R^* H^* & \gamma \I \\[0.3em]
    -\Delta t\, R^* H^*    & \gamma \I 
\end{pmatrix}
\]
associated with an eigenvalue \(\lambda_i^*\) of \(H^*\) is $\lambda^2 - \bigl(1+\gamma - \Delta t|\lambda_i^*|\bigr)\lambda + \gamma$. Applying the Jury stability criterion to the polynomial
\[
\lambda^2 + a_1 \lambda + a_0 = 0 \text{ with }
a_1 = -\bigl(1+\gamma - \Delta t|\lambda_i^*|\bigr), a_0 = \gamma,
\]
we obtain the three conditions
\begin{equation}\label{eq: Jury}
\begin{aligned}
1 + a_1 + a_0 &= \Delta t |\lambda_i^*| > 0,\\
1 - a_1 + a_0 &= 2 + 2\gamma - \Delta t |\lambda_i^*|
              \geq 2 - \Delta t L 
              > 2 - \tfrac{1}{2} 
              > 0,\\
1 - a_0       &= 1 - \gamma > 0.
\end{aligned}
\end{equation}
Hence, all eigenvalues of this \(2\times 2\) block lie strictly inside the unit disk for any \(0 < \gamma < 1\). This implies that all the absolute eigenvalues of $J_{momentum}^*$ lie within the unit disc, i.e., the Euler discretizations of MCSD are locally contractive at $(\x^*, \mathbf{0}, R^*)$.
\end{proof}

\begin{remark}
    From \eqref{eq: Jury}, as $\gamma \rightarrow 1$, one of the characteristic roots approaches the boundary of the unit circle. This could result in damped oscillations, which in turn lead to slow convergence. Therefore, an excessively large momentum parameter $\gamma$ does not necessarily improve convergence, which will be illustrated in Section \ref{sec: numerics}. 
\end{remark}

\section{Numerical experiments}\label{sec: numerics}
In this section, we present a series of numerical examples to substantiate our theoretical results, including the linear convergence rate of the discrete CSD, the dependence of the convergence rate on the condition number, and the acceleration of MCSD. For the discrete MCSD in \eqref{eq: Euler scheme of the sd momentum}, choosing $\gamma>0$ yields momentum acceleration, whereas setting $\gamma=0$ reduces the method to the discrete CSD.

\subsection{Polynomial function on sphere and cylinder}
We begin with a simple but illustrative numerical example: polynomial functions defined on the sphere and the cylinder. Consider an energy functional constrained on $\mathbb{S}^2$,
$$
\begin{aligned}
&f(x,y,z)=(x^2-1)^2+ay^2+2az^2,\\
& \text{s.t.} \quad c(x,y,z)=x^2+y^2+z^2-1=0,
\end{aligned}
$$
where $a$ is a parameter to adjust the condition number at the target saddle point. When $a=2$ ($\kappa=3$), the energy landscape is shown in Figure \ref{fig: ball}(a), with an index-1 saddle point at $\x^*=(0,1,0)$. In Figure \ref{fig: ball}(b), the yellow line shows that the iteration points converge to $\x^*$ with a linear convergence rate. As we adjust $a=2$ ($\kappa=3$) to $a=0.1$ ($\kappa=22.2698$), the convergence rate is reduced. We now fix $a=0.1$ as a parameter representing the ill-conditioned case. It can be observed from the iteration trajectory in Figure \ref{fig: ball}(c) that the discrete CSD iteration points without momentum ($\gamma=0$, black) tend to converge more rapidly along the steep directions, while moving slowly along the eigenvector corresponding to the smallest absolute eigenvalue. This is reflected in Figure \ref{fig: ball}(d) that the error decreases rapidly at the beginning of the iteration (along the steep directions), but slows down in the later stage (along the flat directions). With momentum acceleration added, the introduction of inertia along the eigenvector associated with the smallest-magnitude eigenvalue (see the light sky blue trajectory in Figure \ref{fig: ball}(c)) accelerates the convergence along the flattest direction and consequently yields a faster overall convergence rate (see Figure \ref{fig: ball}(d)).

\begin{figure}
    \centering    \includegraphics[width=0.8\linewidth]{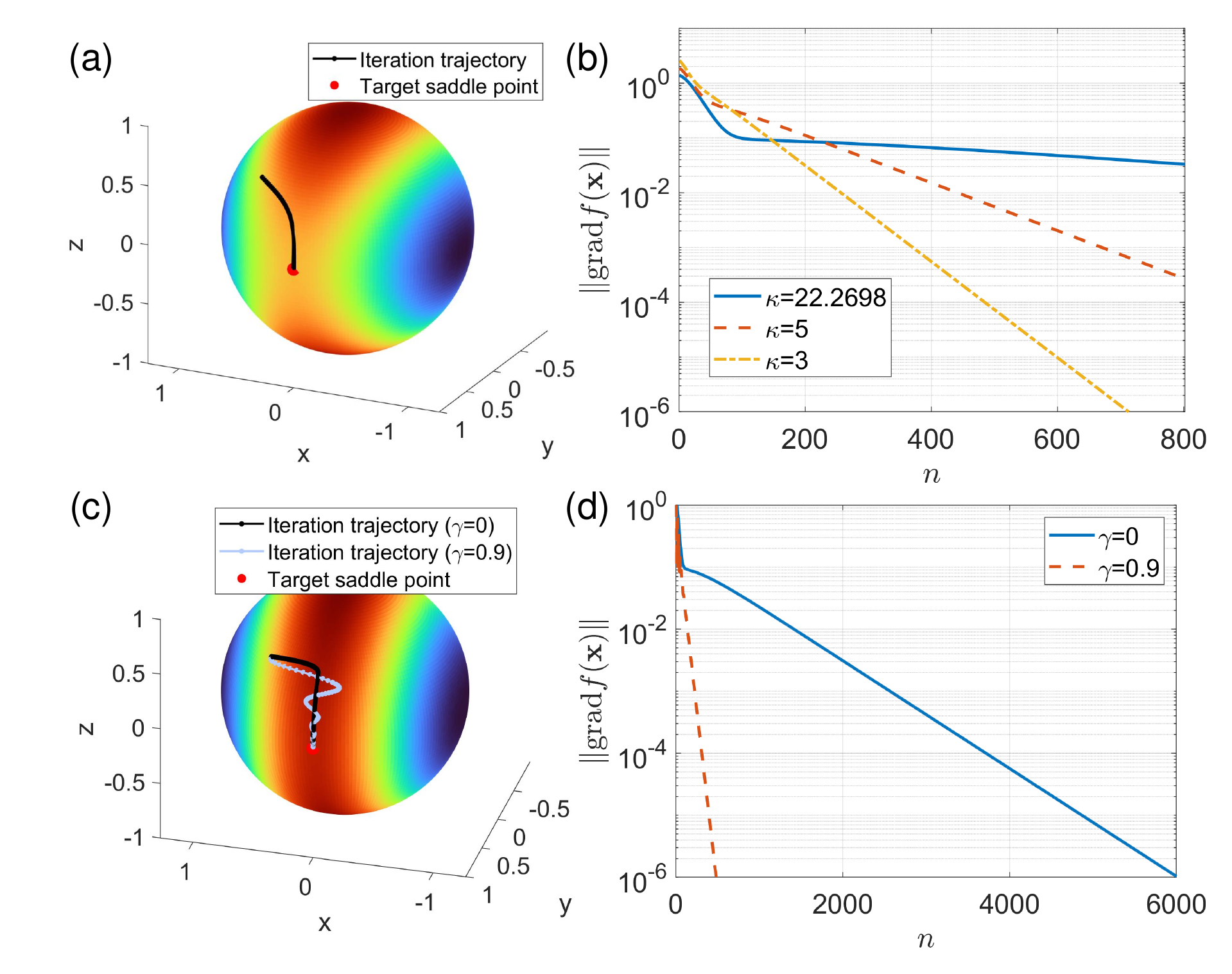}
   \vspace{-.4cm}
   \caption{(a) The energy landscape of $f(x,y,z)=(x^2-1)^2+ay^2+2az^2$ with $a=2$ defined on a unit sphere, and the trajectory of the discrete CSD without momentum. (b) Error plots for the discrete CSD without momentum under different condition numbers. As $a$ decreases, the condition number of $\hessian f(x^{*})$ increases, which in turn yields a slower linear convergence rate. (c) For (a=0.1) (ill-conditioned), trajectories of the discrete CSD without momentum ($\gamma=0$, black) and MCSD with momentum ($\gamma=0.9$, light sky blue).
(d) For $a=0.1$, error plots for the discrete CSD without momentum ($\gamma=0$) and MCSD with momentum ($\gamma=0.9$) for the step size $\Delta t=0.01$.}
    \label{fig: ball}
\end{figure}

\begin{figure}
    \centering    \includegraphics[width=0.8\linewidth]{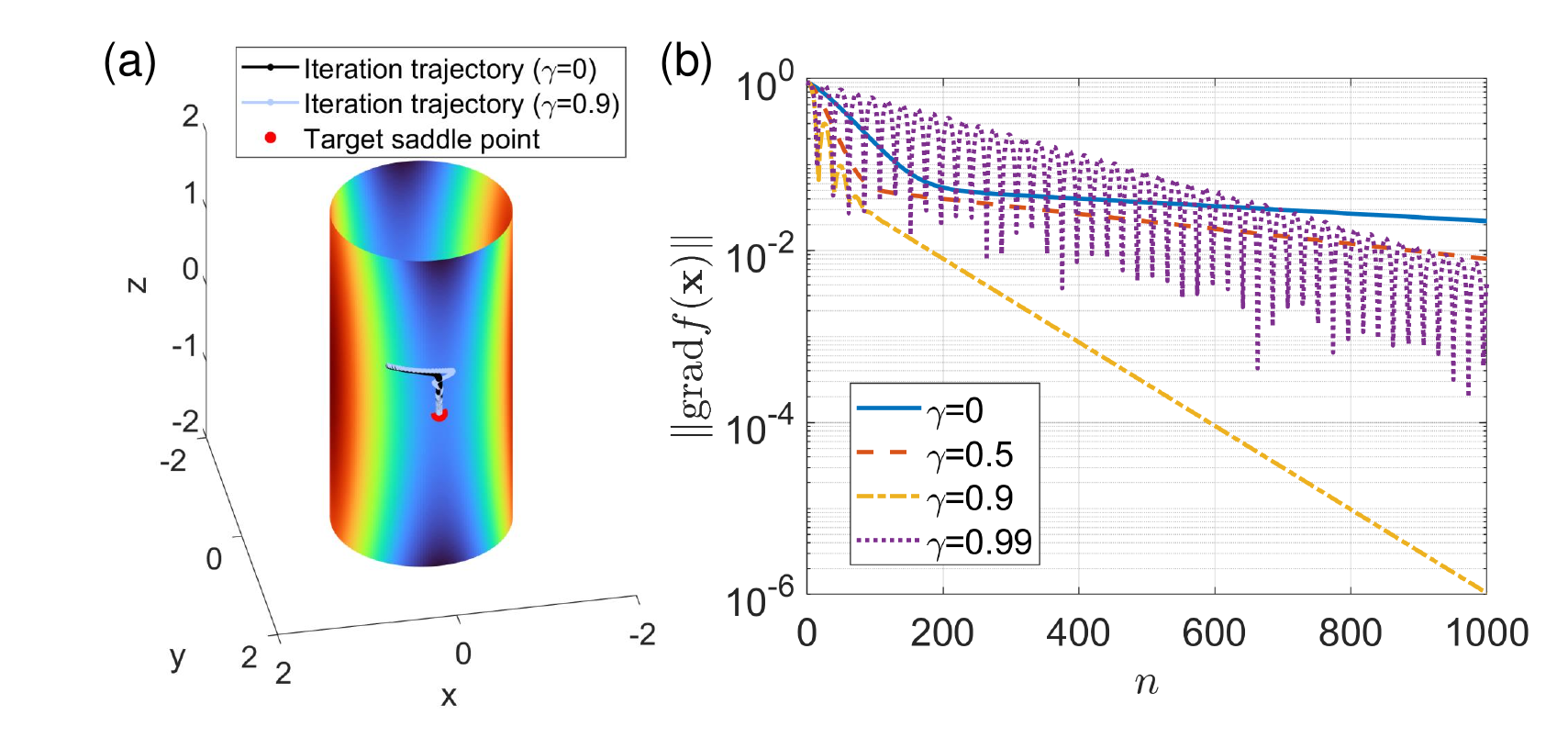}
    \vspace{-.4cm}
    \caption{(a) The energy landscape of $f(x,y,z)=-y^2-0.05z^2$ defined on a cylinder, and trajectories of the discrete CSD without momentum ($\gamma=0$, black) and MCSD with momentum ($\gamma=0.9$, light sky blue).
(d) Error plots for the discrete CSD without momentum ($\gamma=0$) and MCSD with momentum ($\gamma=0.5,0.9,0.99$) for the step size $\Delta t=0.01$.}
    \label{fig: cylinder}
\end{figure}
In many practical applications, it is common that some variables are constrained while others are unconstrained, which corresponds to a cylindrical manifold. Motivated by this, we test the performance of the discrete saddle dynamics on a polynomial defined on the cylinder,
$$
\begin{aligned}
&f(x_1,x_2,x_3)=-y^2-0.05z^2,\\
& \text{s.t.} \quad c(x,y)=x^2+y^2-1=0.
\end{aligned}
$$
In Figure \ref{fig: cylinder}(a), we plot the energy landscape of the objective function, which indicates that $\x^{*}=(0,1,0)$ is an index-1 saddle point. Since the landscape near $\x^*$ is relatively flat, although the discrete CSD still exhibits linear convergence, the convergence is slow. As we increase the momentum parameter from zero, the convergence rate initially improves, but eventually deteriorates due to the damped oscillation (see Figure \ref{fig: cylinder}(b)). While momentum accumulates velocity along the flattest direction, an excessively large momentum causes inertia to dominate the trajectory rather than the reflected gradient. Therefore, in practice, the momentum parameter should be tuned together with the step size $\Delta t$ to ensure numerical stability. Typically, a smaller step size allows for a larger range of momentum parameters.

In the next several examples, we will test the algorithms on practical engineering and scientific problems to further validate our theoretical results and demonstrate the algorithm’s applicability.
\subsection{Thomson problem}
\begin{figure}
    \centering
    \includegraphics[width=0.58\linewidth]{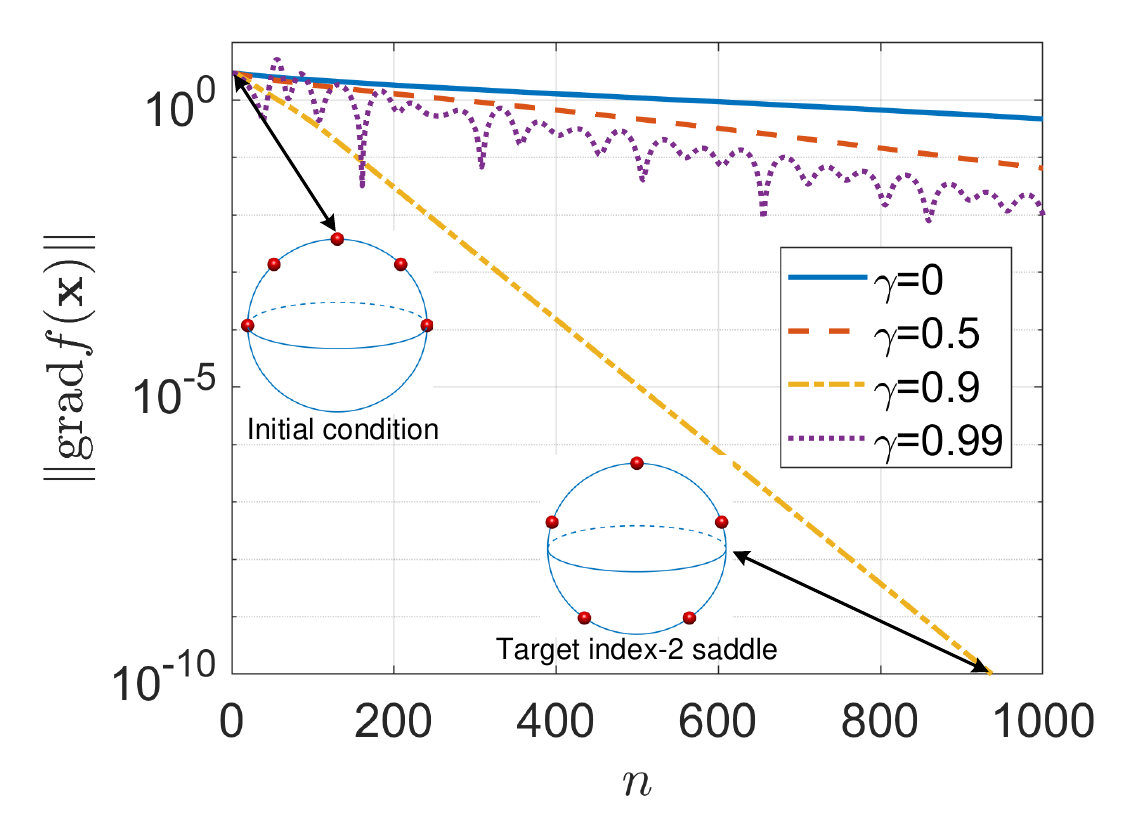}
    \vspace{-.4cm}
    \caption{Error plot for the discrete CSD without momentum ($\gamma= 0$) and MCSD with momentum ($\gamma = 0.5, 0.9, 0.99$) applied on the Thomson problem with particle number $M=5$ and step size  $\Delta t=0.001$. Each red ball represents a particle on the sphere.}
    \label{fig: Thomson}
\end{figure}

The Thomson problem concerns the arrangement of $N$ mutually repelling charges confined to the surface of a sphere. Each pair of particles interacts through the Coulomb potential $1/r$, leading to a highly non-convex energy landscape with many configurations, including minimizers and saddle points. Although it was originally proposed as a model of atomic structure, the Thomson problem also has several other concrete applications, including spherical discretization \cite{hardin2004discretizing} and the analysis of defect structures in materials science \cite{bowick2002crystalline}.

In the Thomson problem, the coordinate of the $i$-th particle $\x_i=(x_i,y_i,z_i)\in \mathbb{R}^3$ is constrained on the unit sphere, and the energy function is defined as 
$$
E(\x_1,\cdots,\x_M)= \sum_{i=1}^M \sum_{j=i+1}^M \frac{1}{\Vert \x_i-\x_j \Vert^2_2}.
$$
Earlier numerical studies of saddle points using CSD have been given in \cite{zhang2012constrained,yin2020constrained}. By rotational symmetry, the first particle is fixed at the north pole, while the second is constrained to lie in the $yz$-plane, so that the Riemannian Hessian at a stationary point generally has no zero eigenvalues \cite{yin2020constrained}. Hence, the variables are constrained on a manifold,
$$
\M=\left\{ (\x_1,\cdots,\x_M)\in \mathbb{R}^{3\times M}, \x_1=(0,0,1),x_2=0,\|\x_i\|_2=1 \right\}.
$$
The retraction operator for $\x \in \M, \mathbf{t}=(\mathbf{0},(0,\mathbf{t}_2),\mathbf{t}_3,\cdots,\mathbf{t}_M)\in T_\x$ is defined as 
$$
\text{Ret}_{\x}(\mathbf{t})=(\x_1,(x_2,\text{Ret}^{\mathbb{S}^1}_{(y_2,z_2)}(\mathbf{t}_2)),\text{Ret}_{\x_3}^{\mathbb{S}^2}(\mathbf{t}_3)),\cdots,\text{Ret}_{\x_M}^{\mathbb{S}^2}(\mathbf{t}_M))),
$$
where $\text{Ret}_{\x_i}^{\mathbb{S}^{d_0-1}}$ is the the exponential mapping 
\begin{equation}\label{eq: exponential mapping of unit ball}
\text{Exp}_\x(\mathbf{t})=(\cos\|\mathbf{t}\|_2)\x+\frac{\sin \|\mathbf{t}\|_2}{\|\mathbf{t}\|_2}\mathbf{t}.
\end{equation}
The vector transport of the vector $\hat \vvec=(\mathbf{0},(0,\mathbf{v}_2),\mathbf{v}_3,\cdots,\mathbf{v}_M)\in T_\x$ from $\x$ to $\text{Ret}_{\x}(\mathbf{t})$ is defined as

$$
P_{\x\to \text{Ret}_{\x}(\mathbf{t})} \vvec=\left(\mathbf{0},\left(0,P_{(y_2,z_2)\to \text{Ret}_{(y_2,z_2)}^{\mathbb{S}^1}(\mathbf{t}_2)}\mathbf{v}_2\right),P_{\x_3 \to \text{Ret}^{\mathbb{S}^2}_{\x_3}(\mathbf{t}_3)}\vvec_3,\cdots\right),
$$
where $P_{\x_i \to \text{Ret}^{\mathbb{S}^{d_0-1}}_{\x_i}(\mathbf{t}_i)}$ is the vector transport for the unit sphere $\mathbb{S}^{d_0-1}$ with the form of
\begin{equation}\label{eq: vector tansport on a unit ball}
P_{\x \to \text{Ret}^{\mathbb{S}^{d_0-1}}_{\x}(\mathbf{t})}\vvec=\mathbf{v}+\frac{\cos\|\mathbf{t}\|_2-1}{\|\mathbf{t}\|_2^2}\langle  \mathbf{t},\vvec\rangle \mathbf{t}-\frac{\sin \|\mathbf{t}\|_2}{\|\mathbf{t}\|_2}\langle  \mathbf{t},\vvec\rangle \x.
\end{equation}
Substituting the retraction operator and vector transport into the Euler scheme \eqref{eq: one inner step manifold} and \eqref{eq: Euler scheme of the sd momentum}, we can search for the saddle point of the Thompson problem as shown in Figure \ref{fig: Thomson}. The minimizer of the Thomson problem is the uniform distribution. The energy landscape also contains many saddle points. For instance, when $M=5$, the configuration in which the particles are uniformly spaced on the same circle is an index-2 saddle point. To locate this saddle, we initialize all particles in the upper half of the domain. Both the discrete CSD and MCSD methods exhibit linear convergence, with the discrete MCSD converging more rapidly. However, an excessively large momentum parameter (e.g., $\gamma = 0.99$) can introduce a damped oscillation that ultimately slows the convergence.

\subsection{Rayleigh quotient}
In this numerical example, we consider a more complex manifold, the Rayleigh quotient defined on the Stiefel manifold. Let $A\in\mathbb{R}^{n\times n}$ be a symmetric matrix and let $V\in\mathbb{R}^{n\times p}$ satisfy the Stiefel constraint
\[
\mathrm{St}(n,p)=\{V\in\mathbb{R}^{n\times p}:\; V^\top V=I_p\}.
\]
The objective function of the Rayleigh quotient \cite{absil2009optimization} to search for the $p$ eigenvectors is defined as
\begin{equation}\label{eq: RQ}
f(V)=-\mathrm{tr}(V^\top A V), V\in \mathrm{St}(n,p),
\end{equation}
which can be viewed as a matrix Rayleigh quotient (equivalently, the sum of Rayleigh quotients of the $p$ orthonormal columns of $V$).
Although $f(V)$ is quadratic in $V$ in the ambient space, the orthogonality constraint induces a highly nonlinear geometry, leading to a nonconvex landscape with many minimizers and saddle points. Writing the eigenvalues of $A$ as $\lambda_1> \lambda_2> \cdots > \lambda_n$ with orthonormal eigenvectors $\{q_i\}_{i=1}^n$, the global minimizers are the Stiefel points whose columns span the dominant eigenspace, i.e., $\mathrm{span}(V)=\mathrm{span}\{q_1,\dots,q_p\}$. 
More generally, any $V$ whose columns span an invariant subspace of $A$ is a critical point. For example, when $p=1$, the problem reduces to optimizing the scalar Rayleigh quotient on the unit sphere. Then every eigenvector $q_i$ is a critical point. In particular, $q_1$ is the global maximizer, and $q_n$ is the global minimizer. 
Moreover, $q_2$ is an index-$1$ saddle point, $q_3$ is an index-$2$ saddle point, and in general, $q_i$ is an index-$(i-1)$ saddle point.

Denote the ambient Euclidean inner product by $\langle U,V\rangle=\tr(U^\top V)$. The tangent space at $X\in\mathrm{St}(n,p)$ is defined as
\[
T_X\mathrm{St}(n,p)=\{Z\in\mathbb{R}^{n\times p}:\; X^\top Z+Z^\top X=0\}.
\]
The orthogonal projection from the ambient space onto $T_X\mathrm{St}(n,p)$ is
\[
\Pvec_X(Y)=Y - X\,\mathrm{sym}(X^\top Y), \mathrm{sym}(M)=\tfrac12(M+M^\top).
\]
The Euclidean gradient of $f$ is $\nabla f(X)=-2AX$. Hence, the Riemannian gradient of \eqref{eq: RQ} is the tangent projection
\[
\grad f(X)=\Pvec_X(\nabla f(X))
= -2\Big(AX - XX^\top AX\Big).
\]
Given a tangent vector $\eta\in T_X\mathrm{St}(n,p)$, the retraction operator is defined by $\mathrm{Ret}_X(\eta)=\mathrm{qf}(X+\eta)$, where $\mathrm{qf}(\cdot)$ denotes the $Q$-factor in the thin QR decomposition:
if $X+\eta = QR$ with $Q^\top Q=I_p$ and $R\in\mathbb{R}^{p\times p}$ is an upper-triangular matrix, then $\mathrm{Ret}_X(\eta)=Q\in\mathrm{St}(n,p)$. For $Y\in\mathrm{St}(n,p)$ and $\zeta\in T_X\mathrm{St}(n,p)$, the vector transport is defined as $$P_{X\to Y}(\zeta)
= \zeta - Y\,\mathrm{sym}(Y^\top \zeta)\in T_Y\mathrm{St}(n,p).$$

\begin{figure}
    \centering
    \includegraphics[width=0.55\linewidth]{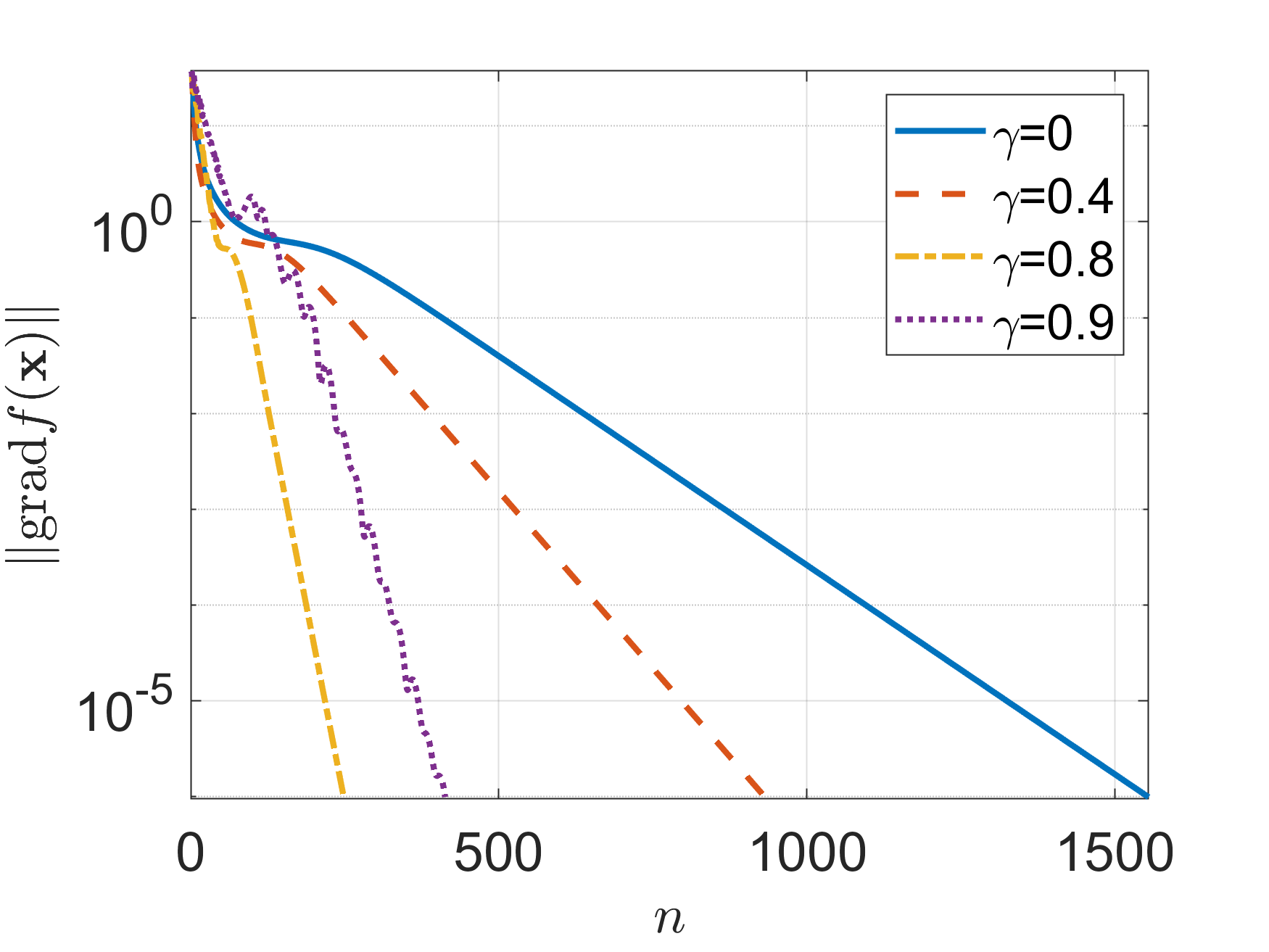}
    \vspace{-.4cm}
    \caption{Error plot for the discrete CSD ($\gamma= 0$) and MCSD ($\gamma = 0.5, 0.9, 0.99$) applied on the Rayleigh quotient problem with $n=100,p=2$. The step size is chosen as $\Delta t=0.01$.}
    \label{fig: Rayleigh}
\end{figure}

We set $n=100, p=2, k=4$ and construct a symmetric matrix $A\in\mathbb{R}^{n\times n}$ whose spectrum is prescribed as $\lambda_1=1,\lambda_2=2, \cdots, \lambda_n=n$. We generate a random orthogonal matrix $Q\in\mathbb{R}^{n\times n}$ and define $A = Q\,\mathrm{diag}(\lambda_1,\ldots,\lambda_n)\,Q^\top$ and aim to compute an index-$4$ saddle point using the discrete CSD and MCSD. In this example, the target saddle point corresponds to the Stiefel point whose column space is the invariant subspace spanned by the eigenvectors $q_2$ and $q_5$, i.e., $\mathrm{span}(V)=\mathrm{span}\{q_2,q_5\}$.
In Figure \ref{fig: Rayleigh}, starting from a fixed initial point $V_0\in\mathrm{St}(n,2)$, we apply the discrete CSD and MCSD to drive the iterate toward this index-$4$ saddle. Similarly, we observe that both the discrete CSD and MCSD exhibit a linear convergence rate. However, incorporating a moderate momentum term can significantly accelerate convergence, especially for ill-conditioned cases (or the eigen-gap of $A$ is small).

\subsection{Bose-Einstein condensate}
\begin{figure}
    \centering
    \includegraphics[width=0.7\linewidth]{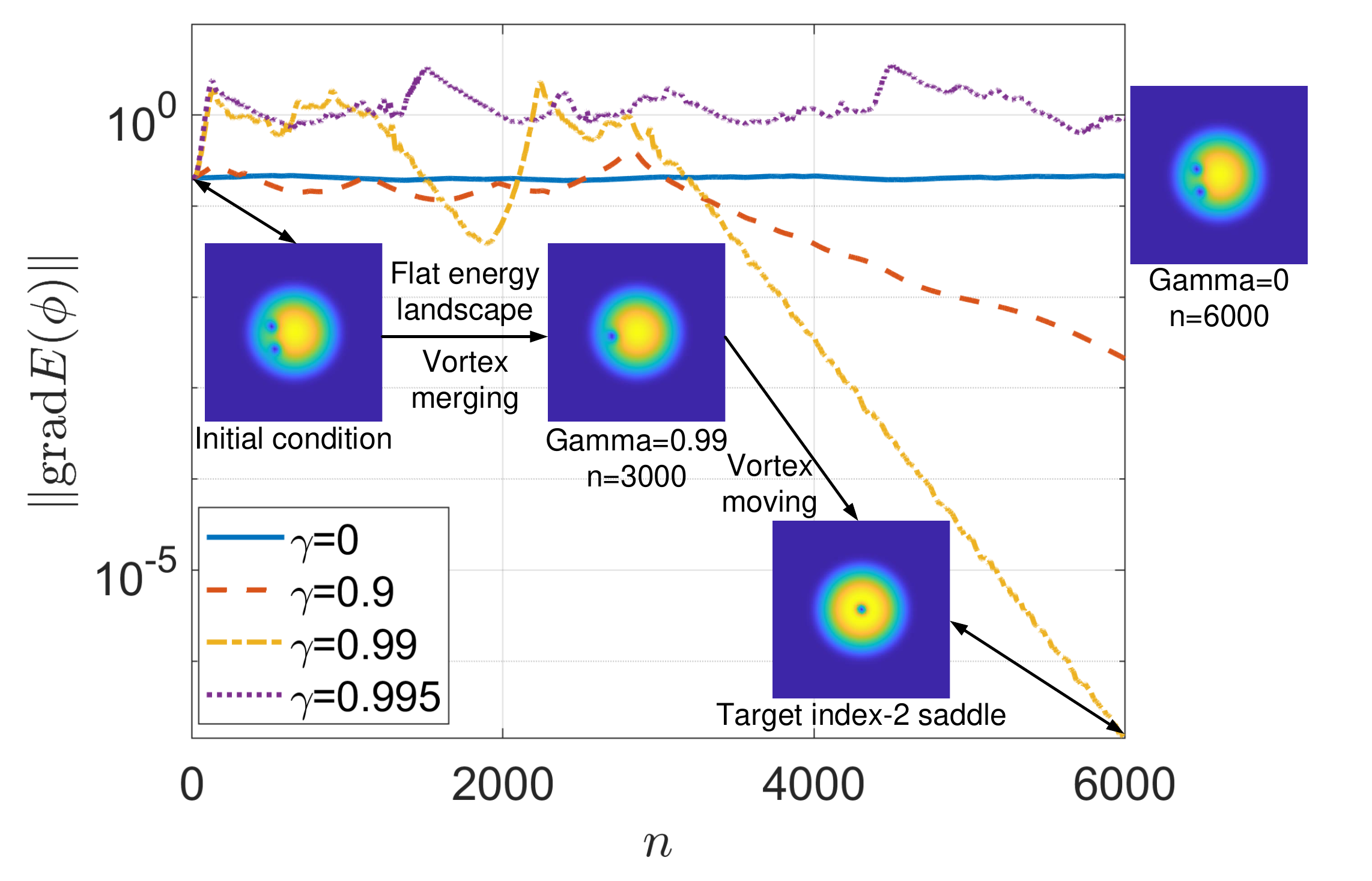}
    \vspace{-.4cm}
    \caption{Error plot for the discrete CSD without momentum ($\gamma= 0$) and MCSD with momentum ($\gamma = 0.9, 0.99, 0.995$) applied on the BEC functional. The step size is chosen as $\Delta t=0.001$. The color bar represents the probability density $|\phi|^2$ for the state. The subscript $\gamma=0,n=6000$ corresponds to the configuration obtained after $n=6000$ iterations of the discrete CSD without momentum ($\gamma=0$).
}
    \label{fig: BEC}
\end{figure}
To further validate our theoretical results and to illustrate that momentum not only accelerates tail convergence but can also help escape flat landscapes, we conduct numerical experiments on the Bose-Einstein condensate (BEC) energy functional, which is of great interest in computational physics research \cite{henning2025gross,bao2003ground,bao2004computing}. The behavior of a Bose-Einstein condensate can be characterized by a macroscopic wave function $\psi(\mathbf{x},t)$, which evolves according to the following nonlinear Gross-Pitaevskii equation
\[
i\,\partial_t \psi(\mathbf{x},t)
=
\left(
-\frac{1}{2}\Delta
+ V(\mathbf{x})
+ g\,|\psi(\mathbf{x},t)|^2
\right)\psi(\mathbf{x},t),
\]
where $V(\x)$ is a real-valued trapping potential. Since $\psi(\mathbf{x},t)$ is a wave function, it naturally satisfies the normalization constraint
$
\int_{\mathbb{R}^d} \lvert \psi(\mathbf{x},t)\rvert^2 \mathrm{d}\mathbf{x} = 1.
$
To find a stationary solution to the Gross-Pitaevskii equation, we take the ansatz $\psi(\mathbf{x},t)=\phi(\mathbf{x})e^{-i\mu t}$. Then, the solution of the nonlinear elliptic eigenvalue problem can be characterized as a critical point of the following energy functional
$$
E(\phi)=\int_{\mathbb{R}^d}\left(\frac{1}{2}\lvert\nabla \phi(\mathbf{x})\rvert^2+V(\mathbf{x})\lvert\phi(\mathbf{x})\rvert^2+\beta \lvert\phi(\mathbf{x})\rvert^4\right) \mathrm{d}\mathbf{x},
$$
subject to the normalization constraint $\int_{\mathbb{R}^d}\lvert \phi(\mathbf{x})\rvert^2\mathrm{d}\mathbf{x}=1$.

We consider a two-dimensional BEC system with a radial-symmetric harmonic oscillator $V (\x) = \|\x\|^2/2$ and $\beta = 300$. In the numerical computation, the wave function $\phi$ is truncated into a bounded domain $D = [-M, M]^2$ with the corresponding energy functional
\begin{equation}
E(\phi)=\int_{D}\left(\frac{1}{2}\lvert\nabla \phi(\mathbf{x})\rvert^2+V(\mathbf{x})\lvert\phi(\mathbf{x})\rvert^2+\beta \lvert\phi(\mathbf{x})\rvert^4\right) \mathrm{d}\mathbf{x},
\label{eq: BEC energy functional}
\end{equation}
and homogeneous Dirichlet boundary conditions, because the stationary states decay to zero exponentially fast in the far field from the effect of the trapping potential. We discretize the wave function with $M = 8$ using finite difference methods with $64$ nodes along each dimension. Since the wave function lies in the complex-valued function space, the inner product is defined as
$$
\langle \phi_1,\phi_2 \rangle=\int_D \frac{\phi_1\bar \phi_2+\phi_2\bar \phi_1}{2} \mathrm{d}\x.
$$
The Euclidean and Riemannian gradients of the BEC energy functional \eqref{eq: BEC energy functional} are 
$$
\nabla E(\phi)=-\Delta \phi+2V(\x)\phi+2\beta|\phi|^2\phi, \gradient E(\phi)=\nabla E(\phi)-\langle \nabla E(\phi),\phi \rangle \phi.
$$
The retraction operator and the corresponding vector transport for this unit sphere are chosen as \eqref{eq: exponential mapping of unit ball} and \eqref{eq: vector tansport on a unit ball}.

The global minimizer or the ground state of \eqref{eq: BEC energy functional} is the configuration without a vertex, while the configuration with a vertex of winding number $+1$ at the trap center serves as an index-2 saddle point, as shown in Figure \ref{fig: BEC}. Note that both the minimizer and the index-2 central vortex state can be written in polar coordinates as $\phi(\x) = e^{im\theta} \phi(r)$, where $m$ is the winding number. Both states possess a zero eigenvalue associated with the invariance of the energy functional \eqref{eq: BEC energy functional} under the global phase transformation $e^{i\nu}\phi(\x)$ with a real constant $\nu$. The index of the saddle only accounts for the negative eigenvalues, and not the zero eigenvalue. With the initial condition where two vertices emerge at the boundary, we search for the index-2 saddle point with the CSD and MCSD. The convergence process involves two steps: the two boundary vertices emerge into a single vertex, and the emerged vertex moves towards the center (tail convergence). Since the wave function approximates zero near the boundary, the vertex merging at the boundary does not significantly change the energy, which means that the iteration points during this process lie in a flat energy landscape. In Figure \ref{fig: BEC}, we can see that the momentum not only accelerates the tail convergence but also helps the iteration points escape from the flat energy landscape efficiently. Specifically, with 
 $\gamma=0.99$, after $3000$ iterations, the iteration points complete the vortex merging. On the contrary, with $\gamma=0$ and the same step size, after $6000$ iterations, the iteration points are still trapped in the flat energy landscape. The system is naturally ill-conditioned because the energy functional involves $|\nabla \phi|^2$. The associated Hessian operator is the negative Laplacian $-\Delta$, whose spectrum ranges from $0$ to $+\infty$. It is worth noting that such gradient terms are also common in other systems, such as phase-field models \cite{provatas2011phase} and the Oseen-Frank model for liquid crystals \cite{frank1958liquid}. In this setting, incorporating momentum may accelerate the computation of saddle points.

\section{Conclusion and discussion}\label{discussion}
This work focuses on the numerical analysis of the discrete constrained saddle dynamics and their momentum variants. Under the assumption that the exact unstable eigenvector is readily solvable, we establish local linear convergence of the discrete constrained saddle dynamics, showing that the convergence rate depends on the condition number of the Riemannian Hessian. To mitigate this dependence, we introduce a momentum-based constrained saddle dynamics and prove local convergence for both the continuous-time dynamics and the discrete numerical scheme, which demonstrates that momentum accelerates convergence, especially in ill-conditioned cases. We also prove that a single-step eigenvector update is sufficient to guarantee the local convergence; in other words, the assumption of having the exact unstable eigenvector at each iteration is not essential, which substantially reduces the computational cost of the discrete scheme. This result can be directly incorporated into existing convergence analyses for saddle dynamics \cite{luo2022sinum,su2025improved,luo2025accelerated}, which removes the assumption of exact eigenvectors made in those works, making the theory more consistent with practical implementations. Finally, we conduct several numerical experiments to substantiate our theoretical results.

This study opens several potential directions for further exploration. For instance, we use a fixed step size in the current version for simplicity, which can be replaced by an adaptive step-size strategy to further accelerate convergence \cite{hu2020brief}, and it would be worthwhile to study its convergence properties. Zeroth-order and stochastic algorithms can also be applied to constrained saddle-point computation to handle objective functions for which the Riemannian gradient is difficult to access \cite{li2023stochastic}. Finally, this work focuses primarily on local convergence, and studying global convergence will also be a great future research direction. 

\section*{Acknowledgments}
The authors would like to thank Dr. Jianyuan Yin for providing the codes for the Thompson and Bose-Einstein condensate problems.

\bibliography{references}

\providecommand{\bysame}{\leavevmode\hbox to3em{\hrulefill}\thinspace}
\providecommand{\MR}{\relax\ifhmode\unskip\space\fi MR }
\providecommand{\MRhref}[2]{%
  \href{http://www.ams.org/mathscinet-getitem?mr=#1}{#2}
}
\providecommand{\href}[2]{#2}
\begin{thebibliography}{10}

\bibitem{absil2009optimization}
P.~A. Absil, R.~Mahony, and R.~Sepulchre, \emph{Optimization algorithms on matrix manifolds}, Optimization Algorithms on Matrix Manifolds, Princeton University Press, 2009.

\bibitem{bao2004computing}
W.~Bao and Q.~Du, \emph{Computing the ground state solution of {Bose--Einstein} condensates by a normalized gradient flow}, SIAM Journal on Scientific Computing \textbf{25} (2004), no.~5, 1674--1697.

\bibitem{bao2003ground}
W.~Bao and W.~Tang, \emph{Ground-state solution of {Bose--Einstein} condensate by directly minimizing the energy functional}, Journal of Computational Physics \textbf{187} (2003), no.~1, 230--254.

\bibitem{boumal2023introduction}
N.~Boumal, \emph{An introduction to optimization on smooth manifolds}, Cambridge University Press, 2023.

\bibitem{bowick2002crystalline}
M.~Bowick, A.~Cacciuto, D.~R. Nelson, and A.~Travesset, \emph{Crystalline order on a sphere and the generalized thomson problem}, Physical Review Letters \textbf{89} (2002), no.~18, 185502.

\bibitem{dudek1994nonlinear}
E.~Dudek and K.~Holly, \emph{Nonlinear orthogonal projection}, Annales Polonici Mathematici, vol.~59, Polska Akademia Nauk. Instytut Matematyczny PAN, 1994, pp.~1--31.

\bibitem{weinan2002string}
W.~E, W.~Ren, and E.~Vanden-Eijnden, \emph{String method for the study of rare events}, Physical Review B \textbf{66} (2002), no.~5, 052301.

\bibitem{weinan2007simplified}
\bysame, \emph{Simplified and improved string method for computing the minimum energy paths in barrier-crossing events}, Journal of Chemical Physics \textbf{126} (2007), no.~16, 164103.

\bibitem{vanden2010transition}
W.~E and E.~Vanden-Eijnden, \emph{Transition-path theory and path-finding algorithms for the study of rare events}, Annual Review of Physical Chemistry \textbf{61} (2010), 391--420.

\bibitem{gad}
W.~E and X.~Zhou, \emph{The gentlest ascent dynamics}, Nonlinearity \textbf{24} (2011), no.~6, 1831.

\bibitem{frank1958liquid}
F.~C. Frank, \emph{{I. Liquid crystals.} {On} the theory of liquid crystals}, Discussions of the Faraday Society \textbf{25} (1958), 19--28.

\bibitem{goodfellow2016deep}
I.~Goodfellow, Y.~Bengio, A.~Courville, and Y.~Bengio, \emph{Deep learning}, vol.~1, MIT press Cambridge, 2016.

\bibitem{gramsbergen1986landau}
E.~F. Gramsbergen, L.~Longa, and W.~H. de~Jeu, \emph{Landau theory of the nematic-isotropic phase transition}, Physics Reports \textbf{135} (1986), no.~4, 195--257.

\bibitem{griffiths2018introduction}
D.~J. Griffiths and D.~F. Schroeter, \emph{Introduction to quantum mechanics}, Cambridge university press, 2018.

\bibitem{hardin2004discretizing}
D.~P. Hardin and E.~B. Saff, \emph{Discretizing manifolds via minimum energy points}, Notices of the AMS \textbf{51} (2004), no.~10, 1186--1194.

\bibitem{henkelman1999dimer}
G.~Henkelman and H.~J{\'o}nsson, \emph{A dimer method for finding saddle points on high dimensional potential surfaces using only first derivatives}, The Journal of chemical physics \textbf{111} (1999), no.~15, 7010--7022.

\bibitem{henning2025gross}
P.~Henning and E.~Jarlebring, \emph{The {Gross--Pitaevskii} equation and eigenvector nonlinearities: numerical methods and algorithms}, SIAM Review \textbf{67} (2025), no.~2, 256--317.

\bibitem{hu2020brief}
J.~Hu, X.~Liu, Z.~W. Wen, and Y.~X. Yuan, \emph{A brief introduction to manifold optimization}, Journal of the Operations Research Society of China \textbf{8} (2020), no.~2, 199--248.

\bibitem{leobacher2021existence}
G.~Leobacher and A.~Steinicke, \emph{Existence, uniqueness and regularity of the projection onto differentiable manifolds}, Annals of global analysis and geometry \textbf{60} (2021), no.~3, 559--587.

\bibitem{li2023stochastic}
J.~Li, K.~Balasubramanian, and S.~Ma, \emph{Stochastic zeroth-order {Riemannian} derivative estimation and optimization}, Mathematics of Operations Research \textbf{48} (2023), no.~2, 1183--1211.

\bibitem{li2001minimax}
Y.~Li and J.~Zhou, \emph{A minimax method for finding multiple critical points and its applications to semilinear pdes}, SIAM Journal on Scientific Computing \textbf{23} (2001), no.~3, 840--865.

\bibitem{lifshitz2007soft}
R.~Lifshitz and H.~Diamant, \emph{Soft quasicrystals--why are they stable?}, Philosophical Magazine \textbf{87} (2007), no.~18-21, 3021--3030.

\bibitem{liu2023constrained}
W.~Liu, Z.~Xie, and Y.~Yuan, \emph{A constrained gentlest ascent dynamics and its applications to finding excited states of {Bose--Einstein} condensates}, Journal of Computational Physics \textbf{473} (2023), 111719.

\bibitem{luo2025accelerated}
Y.~Luo, L.~Zhang, and X.~Zheng, \emph{Accelerated high-index saddle dynamics method for searching high-index saddle points}, Journal of Scientific Computing \textbf{102} (2025), no.~2, 31.

\bibitem{luo2022sinum}
Y.~Luo, X.~Zheng, X.~Cheng, and L.~Zhang, \emph{Convergence analysis of discrete high-index saddle dynamics}, SIAM Journal on Numerical Analysis \textbf{60} (2022), no.~5, 2731--2750.

\bibitem{nesterov2013introductory}
Y.~Nesterov, \emph{Introductory lectures on convex optimization: A basic course}, vol.~87, Springer Science \& Business Media, 2013.

\bibitem{nocedal1999numerical}
J.~Nocedal and S.~J. Wright, \emph{Numerical optimization}, Springer, 1999.

\bibitem{onuchic1997theory}
J.~N. Onuchic, Z.~Luthey-Schulten, and P.~G. Wolynes, \emph{Theory of protein folding: the energy landscape perspective}, Annual Review of Physical Chemistry \textbf{48} (1997), no.~1, 545--600.

\bibitem{polyak1964some}
B.~T. Polyak, \emph{Some methods of speeding up the convergence of iteration methods}, Ussr computational mathematics and mathematical physics \textbf{4} (1964), no.~5, 1--17.

\bibitem{provatas2011phase}
N.~Provatas and K.~Elder, \emph{Phase-field methods in materials science and engineering}, John Wiley \& Sons, 2011.

\bibitem{quapp2014locating}
W.~Quapp and J.~M. Bofill, \emph{Locating saddle points of any index on potential energy surfaces by the generalized gentlest ascent dynamics}, Theoretical Chemistry Accounts \textbf{133} (2014), 1--14.

\bibitem{su2025improved}
H.~Su, H.~Wang, L.~Zhang, J.~Zhao, and X.~Zheng, \emph{Improved high-index saddle dynamics for finding saddle points and solution landscape}, SIAM Journal on Numerical Analysis \textbf{63} (2025), no.~4, 1757--1775.

\bibitem{wolynes1995navigating}
P.~G. Wolynes, J.~N. Onuchic, and D.~Thirumalai, \emph{Navigating the folding routes}, Science \textbf{267} (1995), no.~5204, 1619--1620.

\bibitem{yin2020constrained}
J.~Yin, Z.~Huang, and L.~Zhang, \emph{Constrained high-index saddle dynamics for the solution landscape with equality constraints}, Journal of Scientific Computing \textbf{91} (2022), no.~2, 62.

\bibitem{yin2020construction}
J.~Yin, Y.~Wang, J.~Z. Chen, P.~Zhang, and L.~Zhang, \emph{Construction of a pathway map on a complicated energy landscape}, Physical Review Letters \textbf{124} (2020), no.~9, 090601.

\bibitem{yin2021searching}
J.~Yin, B.~Yu, and L.~Zhang, \emph{Searching the solution landscape by generalized high-index saddle dynamics}, Science China Mathematics \textbf{64} (2021), 1801--1816.

\bibitem{2019High}
J.~Yin, L.~Zhang, and P.~Zhang, \emph{High-index optimization-based shrinking dimer method for finding high-index saddle points}, SIAM Journal on Scientific Computing \textbf{41} (2019), no.~6, A3576--A3595.

\bibitem{zhang2012constrained}
J.~Zhang and Q.~Du, \emph{Constrained shrinking dimer dynamics for saddle point search with constraints}, Journal of Computational Physics \textbf{231} (2012), no.~14, 4745--4758.

\bibitem{zhangdu2012}
\bysame, \emph{Shrinking dimer dynamics and its applications to saddle point search}, SIAM Journal on Numerical Analysis \textbf{50} (2012), no.~4, 1899--1921.

\bibitem{zhang2016optimization}
L.~Zhang, Q.~Du, and Z.~Zheng, \emph{Optimization-based shrinking dimer method for finding transition states}, SIAM Journal on Scientific Computing \textbf{38} (2016), no.~1, A528--A544.

\bibitem{zhang2022error}
L.~Zhang, P.~Zhang, and X.~Zheng, \emph{Error estimates for euler discretization of high-index saddle dynamics}, SIAM Journal on Numerical Analysis \textbf{60} (2022), no.~5, 2925--2944.

\bibitem{JJIAM2023}
\bysame, \emph{A model-free shrinking-dimer saddle dynamics for finding saddle point and solution landscape}, Japan Journal of Industrial and Applied Mathematics \textbf{40} (2023), no.~3, 1677--1693.

\end{thebibliography}
\bibliographystyle{amsplain}

\end{document}